\providecommand*{\input@path}{}
\g@addto@macro\input@path{{include/}{../include/}{springer/}}
\newif\ifllncs
\newif\ifspringer
\newif\ifarxiv
    \title{An Abstract Model for Branch and Cut}
    \author{
        Aleksandr M. Kazachkov\inst{1}\orcidID{0000-0002-4949-9565}
        \and
        Pierre Le Bodic\inst{2}\orcidID{0000-0003-0842-9533}
        \and
        Sriram Sankaranarayanan\inst{3}\orcidID{0000-0002-4662-3241}
    }
    \institute{
        University of Florida, USA \\
        \email{akazachkov@ufl.edu}
        \and
        Monash University \\
        \email{pierre.lebodic@monash.edu}
        \and
        Indian Institute of Management, Ahmedabad \\
        \email{srirams@iima.ac.in}
    }
    \journalname{Mathematical Programming}
    \title{An Abstract Model for Branch and Cut%
        \thanks{Extended version of workshop paper from The 23rd Conference on Integer Programming and Combinatorial Optimization~\cite{KazLeBSan22}.}}
    \author{
        Aleksandr M. Kazachkov
        \and
        Pierre Le Bodic
        \and
        Sriram Sankaranarayanan
    }
    \institute{%
             \textsuperscript{\href{mailto:akazachkov@ufl.edu}{\Letter}} A. M. Kazachkov
             \at
             University of Florida \\
             \email{akazachkov@ufl.edu} \\
             ORCID: 0000-0002-4949-9565
             \and
             P. Le Bodic
             \at
             Monash University \\
            \email{pierre.lebodic@monash.edu} \\
            ORCID: 0000-0003-0842-9533
            \and
            S. Sankaranarayanan
            \at
            Indian Institute of Management, Ahmedabad \\
            \email{srirams@iima.ac.in} \\
            ORCID: 0000-0002-4662-3241
    }
    \date{\today}
    \authorrunning{Kazachkov, Le Bodic, Sankaranarayanan}
    \title{An Abstract Model for Branch and Cut\thanks{Extended version of workshop paper from The 23rd Conference on Integer Programming and Combinatorial Optimization~\cite{KazLeBSan22}.}}
	\author[1]{Aleksandr M. Kazachkov}
	\author[2]{Pierre Le Bodic}
	\author[3]{Sriram Sankaranarayanan}
	\affil[1]{akazachkov@ufl.edu, University of Florida, Gainesville, FL, USA}
	\affil[2]{pierre.lebodic@monash.edu, Monash University, Clayton, VIC, Australia}
	\affil[3]{srirams@iima.ac.in, Indian Institute of Management, Ahmedabad, India}
    \date{}
\setlist{nolistsep}
\definecolor{mycolor}{RGB}{200,0,0}
\tikzset{itria/.style={
        draw,shape border uses incircle,
        isosceles triangle,shape border rotate=90,yshift=-17pt}}
\tikzset{empty/.style={draw=none,fill=none}}
\tikzset{cut/.style={regular polygon, regular polygon sides=4,minimum size=2.1213mm}}
\tikzset{branch/.style={circle,minimum size=1.5mm}}
\tikzset{big cut/.style={regular polygon, regular polygon sides=4,minimum size=7.0711mm}}
\tikzset{big branch/.style={circle,minimum size=5mm}}
\tikzset{
  subtreewt/.append  style={align=center,
    label={[fill=none,font=\small,yshift=-0.25cm]0:{\hspace{1.5pt} subtree \heavycutweight = #1}}},
}
\tikzset{
  subtreewtdeep/.append  style={align=center,
    label={[fill=none,font=\small,yshift=-0.45cm]0:{\hspace{2.5pt} subtree \heavycutweight = #1}}},
}
\tikzset{
  lvlwt/.append  style={align=center,
    label={[fill=none,font=\small]0:{\hspace{1.5pt} layer \heavycutweight = #1}}},
}
\tikzset{
  nodewt/.style={align=center,
    label={[fill=none,font=\small]0:{\hspace{1.5pt} \heavycutweight = #1}}},
}
\DeclareRobustCommand{\rvdots}{%
  \vbox{
    \baselineskip4\p@\lineskiplimit\z@
    \kern-\p@
    \hbox{.}\hbox{.}\hbox{.}
  }}
\def\Ddots{\mathinner{\mkern1mu\raise\p@
\vbox{\kern7\p@\hbox{.}}\mkern2mu
\raise4\p@\hbox{.}\mkern2mu\raise7\p@\hbox{.}\mkern1mu}}
\newcommand{\dotsangle}{26}
\newcommand{\dotsxshift}{.2ex}
\newcommand{\dotsyshift}{1ex}
\newcommand{\Ldots}{\hspace{\dotsxshift}%
    \raisebox{\dotsyshift}{\rotatebox{\dotsangle}{$\Ddots$}}}
\newcommand{\exqed}{\hfill$\blacksquare$}
\renewcommand{\tilde}{\widetilde}
\renewcommand{\hat}{\widehat}
\newcommand*{\defeq}{\mathrel{\vcenter{\baselineskip0.5ex \lineskiplimit0pt
                     \hbox{\footnotesize.}\hbox{\footnotesize.}}}%
                     =}
\colorlet{rosso}{red!80!blue}
\newcommand{\cardinality}[1]{\lvert #1 \rvert}
\spnewtheorem{condition}[theorem]{Condition}{\bfseries}{\itshape}
\newtheorem{example}{Example}
\newtheorem{lemma}[example]{Lemma}
\newtheorem*{lemma*}{Lemma}
\newtheorem{theorem}[example]{Theorem}
\newtheorem*{theorem*}{Theorem}
\newtheorem{corollary}[example]{Corollary}
\newtheorem{definition}[example]{Definition}
\newtheorem*{claim*}{Claim}
\theoremstyle{remark}
\crefname{lemma}{Lemma}{Lemmas}
\crefname{theorem}{Theorem}{Theorems}
\crefname{claim}{Claim}{Claims}
\crefname{algorithm}{Algorithm}{Algorithms}
\crefname{definition}{Definition}{Definitions}
\crefname{assume}{Assumption}{Assumptions}
\crefname{condition}{Condition}{Conditions}
\crefname{remark}{Remark}{Remarks}
\crefname{example}{Example}{Examples}
\crefname{figure}{Figure}{Figures}
\crefname{section}{Section}{Sections}
\crefname{table}{Table}{Tables}
\crefname{enumi}{Statement}{Statements}
\crefname{line}{Step}{Steps}
\crefname{equation}{}{}
\DeclareMathOperator*{\argmin}{arg\,min}
\DeclareMathOperator*{\argmax}{arg\,max}
\newcommand{\suchthat}{:}
\newcommand{\R}{\mathbb{R}}
\newcommand{\Z}{\mathbb{Z}}
\newcommand{\reals}{\R}
\newcommand{\integers}{\Z}
\newcommand{\nonnegreals}{\reals_{\scriptscriptstyle \ge 0}}
\newcommand{\nonnegints}{\integers_{\scriptscriptstyle \ge 0}}
\newcommand{\posints}{\integers_{\scriptscriptstyle > 0}}
\newcommand{\bb}{B\&B\xspace}
\newcommand{\bc}{B\&C\xspace}
\renewcommand{\bar}{\overline}
\renewcommand{\hat}{\widehat}
\renewcommand{\tilde}{\widetilde}
\newcommand{\cutind}{t}
\newcommand{\HCf}{\operatorname{w}}
\newcommand{\treeweight}{\operatorname{\tau}}
\newcommand{\treetime}{\treeweight}
\newcommand{\treesize}{\treeweight}
\newcommand{\heavycutweight}{time\xspace}
\newcommand{\floor}[1]{\left\lfloor#1\right\rfloor}
\newcommand{\ceil}[1]{\left\lceil#1\right\rceil}
\newcommand{\depth}{\delta}
\newcommand{\gap}{z}
\newcommand{\lbd}{\ell}
\newcommand{\rbd}{r}
\newcommand{\cutbd}{c}
\newcommand{\cbd}{\cutbd}
\newcommand{\gapfn}[1][]{\gap_{\scriptscriptstyle #1}}
\newcommand{\targetbound}{Z}
\newcommand{\SVBC}{\text{SVBC}\xspace}
\newcommand{\allones}{\mathbf{1}}
\newcommand{\lrc}[1][\lbd,\rbd;\cutbd,\allones]{(#1)}
\newcommand{\lrcw}[1][\lbd,\rbd;\cutbd,\HCf]{(#1)}
\newcommand{\svbc}[1][\lrcw]{\mathop{SV\!BC}#1}
\newcommand{\SVBWCText}{Single Variable Branching with Harmonically-Worsening Cuts}
\newcommand{\SVBWC}{\text{SVBHC}\xspace}
\newcommand{\svbwc}[1][\lrcw]{\mathop{SV\!BH\!C}#1}
\newcommand{\SVBHC}{\text{SVBHC}\xspace}
\newcommand{\nodes}{\mathcal{V}}
\newcommand{\MOP}{OP}
\newcommand{\ncuts}{\kappa}
\newcommand{\ncutsZ}[1][\targetbound]{\kappa_{\scriptscriptstyle #1}}
\newcommand{\ncutsvbwc}{\bar{\ncuts}}
\newcommand{\depthmax}{\depth^{\max}}
\newcommand{\timefunction}{time-function}
\newcommand{\Timefunction}{Time-function}
\newcommand{\lbFun}[1][\phi]{\underline{#1}}
\newcommand{\ubFun}[1][\phi]{\bar{#1}}
\newcommand{\lbFunOptCont}{\underline{\depth}^c}
\newcommand{\ubFunOptCont}{\bar{\depth}^c}
\newcommand{\lbFunOpt}{\underline{\depth}}
\newcommand{\ubFunOpt}{\bar{\depth}}
\DeclareMathOperator{\deepestcutnodestart}{last\_b\_then\_c}
\DeclareMathOperator{\distance}{dist}
\DeclareMathOperator{\lengthcutpath}{ncuts}
\newcommand{\branchandcut}{branch and cut}
\newcommand{\Branchandcut}{Branch and cut}
\newcommand{\branchandbound}{branch and bound}
\newcommand{\branchandcutx}{branch-and-cut}
\newcommand{\branchandboundx}{branch-and-bound}
\newcommand{\instance}[1]{\texttt{#1}}
\begin{document}

\maketitle

\begin{abstract}
    \Branchandcut{} is the dominant paradigm for solving a wide range of mathematical programming problems --- linear or nonlinear --- combining efficient search (via \branchandbound{}) and relaxation-tightening procedures (via cutting planes, or cuts).
    While there is a wealth of computational experience behind existing cutting strategies, there is simultaneously a relative lack of theoretical explanations for these choices, and for the tradeoffs involved therein.
    Recent papers have explored abstract models for branching and for comparing cuts with \branchandbound{}.
    However, to model practice, it is crucial to understand the impact of jointly considering branching and cutting decisions.
    In this paper, we provide a framework for analyzing how cuts affect the size of \branchandcutx{} trees, as well as their impact on solution time.
    Our abstract model captures some of the key characteristics of real-world phenomena in \branchandcutx{} experiments,
    regarding whether to generate cuts only at the root or throughout the tree, how many rounds of cuts to add before starting to branch, and why cuts seem to exhibit nonmonotonic effects on the solution process.
    \ifspringer
        \keywords{integer programming \and \branchandbound{} \and cutting planes}
    \fi
\end{abstract}

\section{Introduction}

The \branchandcutx{} (\bc) paradigm is a hybrid of the \branchandboundx{} (\bb) \cite{LanDoi60}
and cutting plane methods~\cite{Gomory58,Gomory60_gmic,Gomory63}.
It is central to a wide range of modern global optimization approaches~\citep{burer2008quadbrach,al1987lcpbranch}, particularly mixed-integer linear and nonlinear programming solvers~\citep{50yearsbook}.
Cutting planes, or \emph{cuts}, tighten the relaxation of a given optimization problem and are experimentally known to significantly improve a \bb process~\cite{AchWun13}, but determining which cuts to add is currently based on highly-engineered criteria and computational insights, not from theory.
An outstanding open problem is a rigorous underpinning for the choices involved in \branchandcut{}.
While recent papers have been actively exploring the theory of branching~\citep{leBodic2017,AndLeBMor20,DeyDubMol21,DeyDubMol22,DeyDubMolSha21+} and comparing cutting and branching~\citep{BasConSumJia23_complexity-of-bb-and-cuts-I}, the interaction of the two together remains poorly understood.
Most recently, \citet*{BasConSumJia22_complexity-of-bb-and-cuts-II} have proved that using \bc can strictly outperform either branching or cutting alone.

This paper introduces a theoretical framework for analyzing the practical challenges involved in making \bc decisions.
We build on work by \citet*{leBodic2017}, which provides an abstract model of \bb, based on how much bound improvement is gained by branching on a variable at a node of the \bb tree.
This model not only is theoretically useful, but also can improve branching decisions in solvers~\citep{AndLeBMor20}.

Specifically, we add a cuts component to the abstract \bb model from \citet*{leBodic2017}.
We apply this enhanced model to account for both the utility of the cuts in proving bounds, as well as the additional time taken to solve the nodes of a \bc tree after adding cuts. 

In this abstract model, given the relative strengths of cuts, branching, and the rate at which node-processing time grows with additional cuts, we quantify (i) the number of cuts, and (ii) cut positioning (at the root or deeper in the tree) to minimize both the tree size and the solution time of an instance.
This thereby captures some of the main tradeoffs between cutting and branching, in that cuts can improve the bound or even the size of a \bc tree, but meanwhile slow down the solution time overall.
We use a \emph{single-variable} abstract \bc model, where every branching variable has identical effect on the bound, and we only address the \emph{dual} side of the problem, i.e., we are only interested in proving a good \emph{bound} on the optimal value, as opposed to generating better integer-feasible solutions.

We emphasize that our motivation is to advance a theoretical understanding of empirically-observed phenomena in solving optimization problems, and our results show that some of the same challenges that solvers encounter in applying cuts do arise in theory.
While we state prescriptive recommendations in our abstract model, these are not intended to be immediately computationally viable.
Instead, the intent of the prescriptive results is to see whether our abstraction affords enough simplicity to make precise theoretical statements.

\paragraph{Summary of contributions and paper structure.}

We provide a generic view of \bc in \Cref{sec:notation}.
\Cref{sec:model} introduces our abstract \bc model, in which the quality of cuts and branching remains fixed throughout the tree.
In \Cref{sec:tree-size}, we analyze the effect of cuts on tree size; we prove that in this case it is never necessary to add cuts after the root node, and we provide a lower bound on the optimal number of cutting plane rounds that will minimize the \bc tree size.
In \Cref{sec:svbhc}, we extend our model to account for diminishing marginal returns from cuts, relaxing our assumption of constant cut strength.
Our main result in this section is an approximation of the optimal number of cuts.

Then, in \Cref{sec:general-cut-time}, we study how cuts affect solving \emph{time} for a tree, not just its size, under constant cut strength.
In \Cref{sec:cut-time-bounded-by-polynomial}, we show that cuts are guaranteed to be helpful for sufficiently hard instances.
In contrast to the case of tree size, in this more general setting, adding cuts after the root node may be better.
However, in \Cref{thm:root-cuts-suffice}, we show that when the two branching directions yield the same bound improvement, then root cuts are still sufficient.

\section{Preliminaries}
\label{sec:notation}

We are given a generic optimization problem (\emph{\MOP}) --- linear or nonlinear, with or without integers --- which is to be solved using a \bc algorithm. For convenience, we assume that the \MOP{} is a minimization problem.
We also assume that we already have a feasible solution to the \MOP{}, so that our only goal is to efficiently certify the optimality or quality of that solution.

The \bc approach involves creating a computationally tractable relaxation of the original problem, which we call the \emph{root} of the \bc tree and assume is provided to us.
For example, when the \MOP{} is a mixed-integer linear program, we start with its linear programming relaxation.
The value of the solution to this relaxation provides a lower bound on the optimal value to the \MOP{}.
\bc{} proceeds by either
  (1) tightening the relaxation through adding \emph{valid} cuts, which will remove parts of the current relaxation but no \MOP{}-feasible points, or
  (2) splitting the feasible region, creating two subproblems, which we call the \emph{children} of the original (\emph{parent}) relaxation.
Both of these operations improve the lower bound 
with respect to the original relaxation.
The \bc{} procedure repeats on the new relaxation with cuts added in the case of (1), and recursively on the children in the case of (2); we assume that tractability is maintained in either case.
Moreover, we assume that all children remain \MOP{}-feasible.
We now formally define a \emph{\bc tree} as used in this paper.

\begin{definition}[\bc tree]
\label{def:bctree}
A \bc tree $T$ is a rooted binary tree with node set $\nodes_T$ that is node-labeled by a function $\gapfn[T]: \nodes_T \to \nonnegreals$, indicating the bound improvement at each node with respect to the bound at the root node, such that
\begin{enumerate}
    \item The \emph{root} node $v_0$ has label $\gapfn[T](v_0)=0$.
    \item
      A node $v$ with exactly one child $v'$ is a \emph{cut} node, and we say that a \emph{cut} or \emph{round of cuts} is added at node $v$.
      The bound at $v'$ is $\gapfn[T](v') = \gapfn[T](v) + \cutbd_v$, where $\cutbd_v$ is the nonnegative value associated with the round of cuts at $v$.
    \item
      A node $v$ with exactly two children $v_1$ and $v_2$ is a \emph{branch} node, and we say that we \emph{branch} at node $v$.
      The bounds at the children of $v$ are $\gapfn[T](v_1) = \gapfn[T](v) + \lbd_v$ and $\gapfn[T](v_2) = \gapfn[T](v) + \rbd_v$, where $(\lbd_v, \rbd_v)$ is the pair of bound improvement values associated with branching at $v$.
    \item
      A node with no children is a \emph{leaf} node.
\end{enumerate}
We say that $T$ \emph{proves a bound} of $\targetbound$ if $\gapfn[T](v) \ge \targetbound$ for all leaves $v \in \nodes_T$.
\end{definition}

We will refer to a \emph{cut-and-branch} tree as one in which all cut nodes are at the root, before the first branch node.

While \Cref{def:bctree} is generic, the abstraction we study is restricted to the \emph{single-variable} version in which $\lbd_v$ and $\rbd_v$ are the same for each branch node $v \in \nodes_T$.
We also drop the subscript $v$ in $\cutbd_v$, as in \Cref{sec:tree-size} and \Cref{sec:general-cut-time}, we assume a constant cut quality for each cut node $v \in \nodes_T$, while in \Cref{sec:svbhc}, cut quality is only a function of the number of cuts already applied.

\section{The Abstract Branch-and-Cut Model}
\label{sec:model}
This section introduces the \emph{Single Variable Branch-and-Cut}~(\SVBC) model, an abstraction of a \bc tree as presented in \Cref{def:bctree}.
First, we define a formal notion of the time taken to process a \bc tree as the sum of the node processing times, which in turn depends on the following definition of a \timefunction{}.
\begin{definition}[\Timefunction{}]
\label{def:heavy-cut-fn}
A function $\HCf:\nonnegints\to[1,\infty)$ is a \emph{\timefunction{}} if it is nondecreasing and $\HCf(0) = 1$.
\end{definition}

\begin{definition}[Node time and tree time]
\label{def:node-and-tree-time}
Given a \bc tree $T$, node $v \in \nodes_T$, and \timefunction{} $\HCf$,
\begin{enumerate}[(i)]
    \item 
    the \emph{(node) time} of $v$, representing the time taken to process node $v$, is 
    $\HCf(z)$,
    where $z$ is the number of \emph{cut} nodes in the path from the root of $T$ to $v$.
    \item the \emph{(tree) time} of $T$, denoted by $\treeweight_{\HCf}(T)$, is the sum of the node times of all the nodes in the tree.
\end{enumerate}
We simply say $\treeweight(T)$ when the \timefunction{} $\HCf$ is clear from context.
\end{definition}
Definition~\ref{def:node-and-tree-time} models the observation that cuts generally make the relaxation harder to solve, and hence applying more cuts increases node processing time.
Note that 
\begin{enumerate*}[(i)]
\item if $\HCf = \allones$, i.e., $\HCf(z) = 1$ for all $z\in\nonnegints$, we obtain the regular notion of size of a tree, which counts the number of nodes in the tree, and
\item the \heavycutweight of a pure cutting tree with $t$ cuts (i.e., $t+1$ nodes) is $\sum_{i=0}^{t} \HCf(i)$.
\end{enumerate*}

Finally, we state the \SVBC model in \Cref{def:svbc}.
In this model, the relative bound improvement at every cut node is always the same constant $\cutbd$, and every branch node is associated to the same $(\lbd,\rbd)$ pair of bound improvement values.
We also assume that the time to solve a node depends on the number of cuts added to the relaxation up to that node.

\begin{definition}[Single Variable Branch-and-Cut (\SVBC) Tree]
\label{def:svbc}
A \bc tree is a \emph{Single Variable Branch-and-Cut (\SVBC)} tree with parameters $\lrcw[\lbd,\rbd;\cutbd,\HCf]$ if the bound improvement value associated with each branch node is $(\lbd, \rbd)$, the bound improvement by each cut node is $\cutbd$, and the \timefunction{} is $\HCf$.
We say such a tree is an $\svbc$ tree.
\end{definition}

\noindent Without loss of generality, we assume $0 \le \lbd \le \rbd$. 

\begin{definition}[$\treeweight$-minimality]
Given a function $\HCf:\nonnegints\to[1,\infty)$, we say that a \bc tree $T$ that proves bound $\targetbound$ is $\treeweight$-minimal if, for any other \bc tree $T'$ that also proves bound $\targetbound$ with the same $\lrcw$, it holds that $\treeweight(T') \ge \treeweight(T)$.
\end{definition}
\noindent
When $\HCf = \allones$, we may refer to a $\treeweight$-minimal tree as \emph{minimal-sized}.

It is often the case that applying a round of cuts at a node may not improve the bound as much as branching at that node, but the advantage is that cutting adds only one node to the tree, while branching creates two subproblems.
A first question is whether there always exists a minimal-size tree with \emph{only} branch nodes or \emph{only} cut nodes.
We address this in \cref{ex:branch-and-cut}, which illustrates our notation, shows that cut nodes can help reduce the size of a \bc{} tree despite improving the bound less than branch nodes, and highlights the fact that finding a minimal-sized \bc{} tree proving a particular bound $\targetbound$ involves strategically using both branching and cutting.

\begin{example}[\Branchandcut{} can outperform pure branching or pure cutting]
\label{ex:branch-and-cut}
\begin{figure}[ht]
    \centering
    \captionsetup[subfigure]{justification=centering}
    \begin{subfigure}[b]{0.33\textwidth}
    \centering
    \begin{forest}
        for tree = {big node}
        [0
            [3
                [6] [6]
            ]
            [3
                [6] [6]
            ]
        ]
    \end{forest}
    \caption{Pure branching: 7 nodes}
    \label{fig:BCbetter:a}
    \end{subfigure}
    \begin{subfigure}[b]{0.31\textwidth}
    \centering
    \begin{forest}
        for tree = {big node}
        [0,big cut [1,big cut [,dot node [5,big cut [6]]]]]
    \end{forest}
    \caption{Pure cutting: 7 nodes}
        \label{fig:BCbetter:b}
    \end{subfigure}
    \begin{subfigure}[b]{0.33\textwidth}
    \centering
    \begin{forest}
        for tree = {big node}
        [0,big cut [1,big cut [2,big cut [3 [6] [6] ] ] ] ]
    \end{forest}
    \caption{\Branchandcut{}: 6 nodes}
        \label{fig:BCbetter:c}
    \end{subfigure}
    \caption{%
      Three \bc trees proving $\targetbound = 6$, with $\lbd = \rbd = 3$, $\cutbd = 1$, and $\HCf = \allones$.
    }
    \label{fig:BCbetter}
\end{figure}
  \Cref{fig:BCbetter} shows three \bc trees that prove the bound $\targetbound = 6$.
  The tree in panel~\subref{fig:BCbetter:a} only has branch nodes,
  \subref{fig:BCbetter:b} only has cut nodes,
  and \subref{fig:BCbetter:c} has both branch and cut nodes.
  As seen in the figure, branching and cutting together can create strictly smaller trees than pure branching or cutting methods. \exqed
\end{example}

\citet*{BasConSumJia23_complexity-of-bb-and-cuts-I,BasConSumJia22_complexity-of-bb-and-cuts-II} also investigate the complementary effect of branching and cutting.
The authors prove that for pure binary problems, when cutting and branching are derived from the same underlying logical conditions, then it suffices to \emph{only} cut to minimize the size of the tree~\cite{BasConSumJia23_complexity-of-bb-and-cuts-I}.
When the second assumption is relaxed, the second paper proves that combining cutting and branching can be exponentially better than using either method alone~\cite{BasConSumJia22_complexity-of-bb-and-cuts-II}.
We instead focus on specifying the optimal number of cuts to add or where to place them in the tree for a particular instance.

\section{Optimizing Tree Size}
\label{sec:tree-size}

In this section, we examine 
the number of cuts that minimize the \emph{size} $\cardinality{\nodes_T}$ of a \bc tree $T$, i.e., optimizing $\treeweight(T)$ when $\HCf = \allones$.
In \cref{lem:cutFirst}, we first address the location of these cuts --- should they be at the root or deeper in the tree?

\begin{lemma}\label{lem:cutFirst}
For any target bound $\targetbound$ and a fixed set of parameters $\lrc$, 
there exists a $\treeweight$-minimal $\svbc[\lrc]$ tree
that proves bound $\targetbound$ such that all cut nodes form a path starting at the root of the tree.
\end{lemma}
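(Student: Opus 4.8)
The plan is to start from an arbitrary $\treeweight$-minimal $\svbc[\lrc]$ tree $T$ proving $\targetbound$ and to repeatedly apply a local rearrangement that pushes a cut node above a branch node without changing the number of nodes. Since $\HCf = \allones$, every node has time $1$ and $\treeweight(T) = \cardinality{\nodes_T}$, so any size-preserving rearrangement keeps $T$ minimal; it therefore suffices to produce a minimal tree in which no branch node is an ancestor of any cut node, and then read off the claimed path structure.

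The core exchange is as follows. Take a branch node $b$ with bound $\gapfn[T](b) = g$, and suppose one of its children $b_1$ (say the $\lbd$-child, bound $g+\lbd$) is a cut node with single child $w'$ of bound $g + \lbd + \cutbd$; let $b_2$ be the other child (bound $g + \rbd$), of arbitrary type. Replace this configuration by a cut node $u$ at $b$'s position (bound $g$) whose single child is a branch node $v$ (bound $g + \cutbd$); attach the subtree formerly rooted at $w'$ as the $\lbd$-child of $v$ (bound $g + \cutbd + \lbd$, which matches exactly), and attach the subtree formerly rooted at $b_2$ as the $\rbd$-child of $v$ (bound $g + \cutbd + \rbd$). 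I would verify three things: (i) the result is again an $\svbc[\lrc]$ tree, since $u$ contributes exactly $\cutbd$ and $v$ contributes exactly $(\lbd,\rbd)$; (ii) it still proves $\targetbound$, because the left subtree is reattached at an identical bound while every bound in the right subtree increases by $\cutbd$, so all leaf bounds are nondecreasing; and (iii) the node count is unchanged, as both the old and new configurations consist of two internal nodes plus the two subtrees rooted at $w'$ and $b_2$.

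To drive this to termination I would use the potential $\Phi(T)$ equal to the number of ordered pairs $(y,x)$ where $y$ is a branch node, $x$ is a cut node, and $y$ is an ancestor of $x$. The key accounting claim is that each exchange decreases $\Phi$ by exactly $1$: the inversion between $b$ and the moved cut is destroyed (the cut now sits at $u$, above the branch $v$), while every other branch--cut ancestry pair is preserved, since the branch nodes strictly above $b$ keep exactly the same cut descendants, and the cuts inside the subtrees of $w'$ and $b_2$ retain exactly one branch node in the region ($v$ in place of $b$) as their nearest new ancestor. I would also check applicability: whenever $\Phi(T) > 0$, picking a cut $w$ with a branch ancestor and letting $b$ be its lowest branch ancestor guarantees that the child of $b$ on the path toward $w$ is itself a cut node, which is precisely the configuration the exchange requires. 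Hence after finitely many exchanges we reach a $\treeweight$-minimal tree with $\Phi = 0$.

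Finally I would translate $\Phi = 0$ into the stated conclusion: if the tree contains at least one cut, the root cannot be a branch node (it would be an ancestor of that cut), so the root is a cut node; following single children, the cut nodes form a contiguous path from the root down to the first branch, and no cut can appear below that branch (else it would acquire a branch ancestor), so all cut nodes lie on a single path starting at the root. I expect the main obstacle to be the bookkeeping behind the $\Phi$-decrement claim --- in particular, arguing carefully that reattaching the $b_2$-subtree one level deeper creates or destroys no branch--cut inversions beyond the intended one --- together with confirming that the exchange is always available once an inversion exists; the validity and size-invariance checks are routine precisely because $\HCf = \allones$.
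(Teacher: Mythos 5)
Your proposal is correct and takes essentially the same approach as the paper: both arguments relocate a cut node above a branch node via a size-preserving rearrangement, observe that every leaf bound is weakly increased (so the tree still proves $\targetbound$ and remains $\treeweight$-minimal), and iterate until all cuts sit on a path at the root. The only difference is mechanical --- the paper moves the offending cut directly to just below the root in one step and says ``recursively apply,'' whereas you perform a local parent--child swap and certify termination with an inversion-counting potential; your bookkeeping for the $\Phi$-decrement checks out.
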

\begin{proof}
Let $T$ be a $\treeweight$-minimal $\svbc[\lrc]$ tree.
If all cut nodes in the tree $T$ are at the root, then we are done.
Otherwise, let $v \in \nodes_T$ be a cut node with a parent that is a branch node, i.e., $v$ has one child $w$.
Let $T'$ be the tree obtained by removing $w$ from $T$, i.e., contracting $v$ and $w$, and instead inserting $w$ immediately after the root.
Let $v' \ne w$ be a leaf node of $T$, which is also a leaf of $T'$.
It holds that $\gapfn[T'](v') \ge \gapfn[T](v')$, since the path from the root to $v'$ in $T'$ goes through the same branch nodes and at least as many cut nodes as in $T$.
Recursively applying this procedure, we move all cut nodes to the root without increasing the tree size, proving the desired result by the assumed minimality of $T$.
\end{proof}

We have proved that for any minimal-size $\svbc[\lrc]$ tree, it suffices to consider cut-and-branch trees, where all cut nodes are at the root.
To understand how \emph{many} cuts should be added,
we start with the special case that $\cbd \le \lbd = \rbd$.

A useful observation for our analysis is that one should not evaluate the effects of cuts one at a time on the size of the tree, as tree size does not monotonically decrease as the number of cuts increases from $0$ to the optimal number of cuts.
For example, if $\cutbd < \lbd = \rbd$ and $\targetbound = 2\cutbd \pmod r$, using one cut node would \emph{increase} the overall tree size, while two cut rounds would reduce tree size by $2^{\ceil{\targetbound/\rbd}}-2$.
This phenomenon highlights a practical challenge in determining how to use a cut family and whether cuts benefit an instance, as adding too few or too many cut nodes may increase tree size while the right number can greatly decrease the overall size.

Instead, the key insight for \Cref{thm:SVBcutGood_lequalsr} is reasoning about \emph{layers}: adding a \emph{set} of cut nodes is beneficial when, together, the cut nodes improve the bound enough to remove an additional layer of the \branchandboundx{} tree, and fewer cuts are added than the number of removed nodes.

If a minimal-size tree $T$ proving bound $\targetbound$ has $k$ cut nodes at the root, then the depth of the \emph{branching component}, the subtree starting with the first branch node, is
    $\depth_k \defeq \max \{0, \ceil{(\targetbound - \cbd k) / \rbd} \}$.
The total size of the tree is
    $\treetime(T) = k + 2^{\depth_k + 1} - 1$.
We also know that the depth of the branching component when the target bound is $\targetbound$ is never more than
    $
        \depthmax \defeq \ceil{\targetbound / \rbd}.
    $

For any given $\depth \in \{0,\ldots,\depthmax\}$ and target bound $\targetbound$,
the minimum number of cut nodes at the root to achieve that depth of the branching component is 
    \[
        \ncutsZ(\depth) \defeq \max\{0,\ceil{(\targetbound - \depth \rbd)/\cutbd}\},
    \]
where it can be seen that $\ncutsZ(\depth) = 0$ if and only if $\depth = \depthmax$, within the domain.

\begin{lemma}
\label{lem:svbc-opt-to-cut-layers}
    When $0 < \cutbd \le \lbd = \rbd$, 
	the optimal number of cut nodes in a minimal-size \SVBC tree proving bound $\targetbound$ is $\ncutsZ(\depth)$ for some $\depth \in \nonnegints$.
\end{lemma}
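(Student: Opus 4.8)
The plan is to turn the lemma into a one-dimensional optimization over the number of root cut nodes and then argue that a minimizer can always be taken of the prescribed form. First I would invoke \cref{lem:cutFirst} to restrict attention to cut-and-branch trees, so that a minimal-size \SVBC tree is determined by its number $k \in \nonnegints$ of root cuts. As already recorded in the paragraph preceding the statement, because $\lbd = \rbd$ every leaf at branching depth $j$ has bound $\cutbd k + j\rbd$, so the residual bound $\targetbound - \cutbd k$ is proved by a minimal branching component that is the complete binary tree of depth $\depth_k = \max\{0,\ceil{(\targetbound - \cutbd k)/\rbd}\}$, and the resulting size is $f(k) \defeq \treesize(T) = k + 2^{\depth_k+1} - 1$. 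The optimal number of cuts is thus a minimizer of $f$ over $\nonnegints$, and it remains only to locate one at a point of the form $\ncuts(\depth)$.

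The crux is a single domination inequality: for every $k$, the tree using $\ncuts(\depth_k)$ cuts is no larger than the tree using $k$ cuts. I would derive this from two facts that are immediate from the definition of $\ncuts(\depth)$ as the least number of root cuts whose branching depth is at most $\depth$. Taking $\depth = \depth_k$: (i) since $k$ itself attains branching depth $\depth_k \le \depth_k$, we get $\ncuts(\depth_k) \le k$; and (ii) by definition the choice $\ncuts(\depth_k)$ attains branching depth $\depth_{\ncuts(\depth_k)} \le \depth_k$. Combining these,
\[ f(\ncuts(\depth_k)) = \ncuts(\depth_k) + 2^{\depth_{\ncuts(\depth_k)}+1} - 1 \le k + 2^{\depth_k+1} - 1 = f(k). \]
Hence if $k^*$ minimizes $f$, then $\ncuts(\depth_{k^*})$ also minimizes $f$ and has the required form, which is exactly the claim.

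I do not expect a hard analytic obstacle; the work is almost entirely in the reduction and the bookkeeping, so the part needing the most care is verifying the definitional claims (i)--(ii) cleanly and covering the truncations hidden in the two $\max\{0,\cdot\}$ operators. Concretely, I would check the boundary regimes separately: when $\targetbound \le \cutbd k$ the branching depth collapses to $\depth_k = 0$ and the branching component is a single leaf, while at the top end $\depth = \depthmax$ one has $\ncuts(\depthmax) = 0$, so the pure-branching tree is correctly included among the candidates $\ncuts(\depth)$. I would also flag the one place the standing hypothesis $\cutbd \le \rbd$ is used: since adding a single cut lowers $\depth_k$ by at most one, every depth in $\{0,\dots,\depthmax\}$ is actually attained, so the description of $\ncuts(\depth)$ as the minimal cut count achieving \emph{that} depth coincides with the ``at most $\depth$'' reading used in (i)--(ii), and no candidate depth is vacuous.
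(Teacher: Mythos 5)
Your proof is correct and takes essentially the same approach as the paper: the paper also parametrizes by the branching-component depth and observes that $\ncuts(\depth)$ is the necessary and sufficient number of root cuts for that depth, which is precisely your domination inequality $f(\ncuts(\depth_k)) \le f(k)$. Your write-up is just a more explicit rendering of the paper's two-sentence argument, with the boundary cases of the $\max\{0,\cdot\}$ truncations checked carefully.
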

\begin{proof}
    A branching component with depth $\depth$ proves a bound $\depth \rbd$, leaving a bound of $\max \{ 0, \targetbound - \depth \rbd \}$ to prove with cut nodes.
    Therefore, it is necessary and sufficient to use 
        $\ncutsZ(\depth) = \max \left\{ 0,\left \lceil {(\targetbound - \depth \rbd)}/{\cbd} \right \rceil \right\}$ 
    cut nodes.
\end{proof}

Next, we present \cref{thm:SVBcutGood_lequalsr}, which provides the optimal number of rounds of cuts for an $\svbc[\lrc]$ tree when $\cutbd \le \lbd = \rbd$, as a function of the tree parameters and the target bound.
The theorem implies that the depth of the branching component in a minimal-size tree can take one of four values, and it is at most $\depth^* \defeq \floor{\log_2 \ceil{{\rbd}/{\cutbd}}}$,
which is independent of the target bound $\targetbound$.
Thus, as $\targetbound$ increases, the proportion of the bound proved by branch nodes goes to zero.

\begin{theorem}
\label{thm:SVBcutGood_lequalsr}
  Let $\depth^* \defeq \floor{\log_2 \ceil{{\rbd}/{\cutbd}}}$.
    When $0 < \cutbd \le \lbd = \rbd$, the number of cut nodes to minimize the size of an $\svbc[\lrc]$ tree proving bound $\targetbound$ is
    \begin{equation*}
    k^* \defeq
    \begin{cases}
        \ncutsZ(\depth^*)
        &\text{if $\targetbound \ge \rbd \depth^*$ 
            and 
            $
                \ncutsZ(\depth^*-1) -  \ncutsZ(\depth^*)
                \ge 2^{\depth^*}
            $}
        \\
        \ncutsZ(\depth^* - 1)
        &\text{if $\targetbound \ge \rbd \depth^*$ 
            and 
            $
                \ncutsZ(\depth^*-1) -  \ncutsZ(\depth^*)
            < 2^{\depth^*}
            $}
        \\
        \ncutsZ(\depthmax - 1)
        &\text{if $\targetbound < \rbd \depth^*$ 
            and
            $
                \ncutsZ(\depthmax-1)
            < 2^{\depthmax}
            $}
        \\
        0 & \text{otherwise.}
    \end{cases}
    \end{equation*}
    The optimal depth of the branching component is either $\min\{\depth^*,\depthmax\}$ or $\min\{\depth^*,\depthmax\} - 1$.
    Moreover, the size of any minimal $\svbc[\lrc]$ tree that proves bound $\targetbound$ is at least
    \(
      2^{\ceil{(\targetbound - \cbd k^*)/\rbd}+1}-1+k^*.
    \)
\end{theorem}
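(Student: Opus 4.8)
The plan is to collapse the two-dimensional search (how many cuts, and where they go) into a one-dimensional search over the depth $\depth$ of the branching component, and then study the backward difference of the resulting size function. By \Cref{lem:cutFirst} I may assume all cut nodes form a root path, and by \Cref{lem:svbc-opt-to-cut-layers} a minimal tree uses exactly $\ncuts(\depth)$ cut nodes for some integer $\depth$. Since $\ncuts(\depth)=0$ once $\depth \ge \depthmax$ and the remaining size $2^{\depth+1}-1$ is then strictly increasing, it suffices to minimize
\[
    s(\depth) \defeq \ncuts(\depth) + 2^{\depth+1} - 1
\]
over $\depth \in \{0,\dots,\depthmax\}$, after which $k^*=\ncuts(\depth)$ at the minimizing depth.

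The engine of the argument is the backward difference $g(\depth) \defeq s(\depth-1) - s(\depth) = \bigl[\ncuts(\depth-1) - \ncuts(\depth)\bigr] - 2^{\depth}$, whose sign records whether shrinking the branching depth by one (at the cost of extra cuts) helps: $g(\depth)\ge 0$ means depth $\depth$ is no worse than depth $\depth-1$. For every interior layer $1 \le \depth \le \depthmax-1$ neither $\ncuts(\depth-1)$ nor $\ncuts(\depth)$ is truncated at $0$, and the elementary identity $\ceil{a+\rbd/\cutbd}-\ceil{a}\in\{\floor{\rbd/\cutbd},\ceil{\rbd/\cutbd}\}$ pins the increment: $\ncuts(\depth-1)-\ncuts(\depth)\in[\floor{\rbd/\cutbd},\ceil{\rbd/\cutbd}]$. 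Because $2^{\depth}$ is strictly increasing while the increments lie in this fixed band, I compare against the two powers straddling $\optdepth=\floor{\log_2\ceil{\rbd/\cutbd}}$, namely $2^{\optdepth}\le\ceil{\rbd/\cutbd}<2^{\optdepth+1}$. This yields two monotonicity facts: for interior $\depth\ge\optdepth+1$ the increment is at most $\ceil{\rbd/\cutbd}<2^{\optdepth+1}\le 2^{\depth}$, so $g(\depth)<0$; and for interior $\depth\le\optdepth-1$ the increment is at least $\floor{\rbd/\cutbd}\ge 2^{\optdepth-1}\ge 2^{\depth}$ (the first inequality following from $2^{\optdepth}\le\ceil{\rbd/\cutbd}$, and vacuous when $\optdepth=0$), so $g(\depth)\ge 0$. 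Hence $s$ is unimodal: non-increasing up to $\optdepth-1$, then increasing from $\optdepth$ onward.

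For the regime $\targetbound\ge\rbd\optdepth$ I use that this forces $\depthmax\ge\optdepth$, so depths $\optdepth-1$ and $\optdepth$ are feasible and the top partial layer (if $\depthmax>\optdepth$) also satisfies $g<0$ since $\ncuts(\depthmax-1)\le\ceil{\rbd/\cutbd}<2^{\depthmax}$. Thus $s$ is non-increasing on $\{0,\dots,\optdepth-1\}$ and increasing on $\{\optdepth,\dots,\depthmax\}$, and the only undecided comparison is $s(\optdepth-1)$ versus $s(\optdepth)$, governed by the sign of $g(\optdepth)=\ncuts(\optdepth-1)-\ncuts(\optdepth)-2^{\optdepth}$. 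This selects depth $\optdepth$ with $\ncuts(\optdepth)$ cuts when $\ncuts(\optdepth-1)-\ncuts(\optdepth)\ge 2^{\optdepth}$, and depth $\optdepth-1$ with $\ncuts(\optdepth-1)$ cuts otherwise, matching the first two cases.

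Finally, when $\targetbound<\rbd\optdepth$ we have $\depthmax\le\optdepth$, so every interior step $\depth\in\{1,\dots,\depthmax-1\}$ satisfies $\depth\le\optdepth-1$ and hence $g(\depth)\ge 0$; thus $s$ is non-increasing on $\{0,\dots,\depthmax-1\}$, and the sole remaining comparison is the topmost step $g(\depthmax)=\ncuts(\depthmax-1)-2^{\depthmax}$. This step is special because the top layer is \emph{partial} ($\targetbound-(\depthmax-1)\rbd\in(0,\rbd]$), so its increment $\ncuts(\depthmax-1)$ need not reach $\floor{\rbd/\cutbd}$ and must be tested against $2^{\depthmax}$ directly, giving depth $\depthmax-1$ with $\ncuts(\depthmax-1)$ cuts (case 3) or zero cuts (case 4). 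Substituting the winning depth back into $s$, and recalling that $k^*$ cuts attain branching depth $\ceil{(\targetbound-\cbd k^*)/\rbd}$, gives the stated size $2^{\ceil{(\targetbound-\cbd k^*)/\rbd}+1}-1+k^*$. The main obstacle is precisely this boundary bookkeeping: the truncation $\max\{0,\cdot\}$ in $\ncuts$ voids the clean increment band on the partial top layer, so the crossover of $s$ must be argued through the two explicit power-of-two comparisons rather than through a single global monotonicity claim, and the split $\targetbound\ge\rbd\optdepth$ versus $\targetbound<\rbd\optdepth$ must be tracked to know whether the crossover falls in the interior (at $\optdepth$) or is forced onto the boundary (at $\depthmax$), with the two descriptions agreeing consistently when $\depthmax=\optdepth$.
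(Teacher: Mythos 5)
Your proof is correct and follows essentially the same route as the paper's: reduce to the depth of the branching component via \Cref{lem:svbc-opt-to-cut-layers}, compare the per-layer cut increment $\ncuts(\depth-1)-\ncuts(\depth)$ against the $2^{\depth}$ leaves it removes, locate the crossover at $\depth^*=\floor{\log_2\ceil{\rbd/\cbd}}$, and handle the partial top layer at $\depthmax$ separately. The only substantive difference is that you derive the increment band $[\floor{\rbd/\cbd},\ceil{\rbd/\cbd}]$ from the ceiling identity rather than by tracking leaf bounds; this band is in fact the correct one (the paper's intermediate upper bound $\ncuts(\depth-1)-\ncuts(\depth)\le\floor{\rbd/\cbd}$ should read $\ceil{\rbd/\cbd}$, as seen from $\targetbound=10$, $\rbd=3$, $\cbd=2$, where $\ncuts(1)-\ncuts(2)=2$), and neither conclusion is affected since only $\ceil{\rbd/\cbd}<2^{\depth^*+1}$ is needed for the downward comparison.
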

\begin{proof}

    Given an instance for which bound $\targetbound$ needs to be proved, our goal is to understand how the size of the $\SVBC{\lrc}$ tree changes as a function of $k$, the number of cuts we apply at the root node.
	By \Cref{lem:svbc-opt-to-cut-layers},
	our goal is equivalent to finding the optimal depth of the branching component.
    
    Let $T_{\depth}$ denote the tree with $\ncutsZ(\depth)$ cuts added at the root node, followed by a branching component of depth $\depth$.
    Recall that $\ncutsZ(\depth)$ is the minimum number of cuts to achieve a branching depth of $\depth$.
    The bound $\gapfn[T_{\depth}](u)$ at each leaf node $u$ of $T_{\depth}$ satisfies
    $\gapfn[T_{\depth}](u) \ge \targetbound$.
    Hence, for any node $v$ that is a parent of a leaf node $u$ of $T_{\depth}$, 
    the bound at $v$ is $\gapfn[T_{\depth}](v) = \gapfn[T_{\depth}](u) - \rbd \ge \targetbound - \rbd$.
    By definition of $\ncutsZ(\depth-1)$,
    $\gapfn[T_{\depth}](v) + (\ncutsZ(\depth-1) - \ncutsZ(\depth)) \cbd \ge \targetbound$,
    as the last layer of the tree $T_\depth$ will no longer be necessary,
    and any fewer cuts will not meet the target bound:
        \[
            \gapfn[T_{\depth}](v) + (\ncutsZ(\depth-1) - \ncutsZ(\depth) - 1) \cbd 
            < \targetbound.
        \]
    Hence, %
        \begin{equation*}
        \label{eq:upper-bound-on-ncuts}
            \ncutsZ(\depth - 1) - \ncutsZ(\depth)
            < 1 + \frac{\targetbound - \gapfn[T_{\depth}](v)}{\cbd}
            \le 1 + \frac{\rbd}{\cbd},
        \end{equation*}
    so that the number of cuts to decrease the branching component by one more layer is at most $\ncutsZ(\depth - 1) - \ncutsZ(\depth) \le \floor{\rbd/\cbd}$.
    As there are $2^\depth$ leaf nodes in the last layer of $T_\depth$,
    $\treeweight(T_{\depth-1}) > \treeweight(T_{\depth})$
    if and only if $\ncutsZ(\depth - 1) - \ncutsZ(\depth) > 2^{\depth}$,
    implying that adding the $\ncutsZ(\depth - 1) - \ncutsZ(\depth)$ cut nodes
    is beneficial if
        $\depth > \depth^* = \floor{\log_2\ceil{{\rbd}/{\cutbd}}}$.
    This is independent of $\targetbound$ and we conclude that,
    if $\depth^* \le \depthmax$,
    then the optimal branching depth is at most $\depth^*$.
        
    Now assume $\depth < \depthmax$.
    For a leaf node $u$ of $T_{\depth}$,
        $\gapfn[T_{\depth}](u) < \targetbound + \cbd$,
    as the definition of $\ncutsZ(\depth)$ means $\ncutsZ(\depth) - 1$ cut nodes would require another layer of branching to prove bound $\targetbound$.
    For any node $v$ that is a parent of $u$,
    by definition of $\ncutsZ(\depth-1)$,
        $
            \gapfn[T_{\depth}](v) + (\ncutsZ(\depth-1) - \ncutsZ(\depth)) \cbd \ge \targetbound,
        $
    which, together with $\gapfn[T_{\depth}](v) < \targetbound - \rbd + \cbd$, implies that, when $\depth < \depthmax$, the number of cuts to decrease the branching component by one more layer is at least
        \begin{equation}
        \label{eq:lower-bound-on-ncuts}
            \ncutsZ(\depth - 1) - \ncutsZ(\depth)
            \ge \frac{\targetbound - \gapfn[T_{\depth}](v)}{\cbd}
            \ge \ceil{\frac{\rbd}{\cbd}} - 1.
        \end{equation}
    It follows that removing an additional layer weakly increases the size of the tree if
        $\ceil{\rbd/\cbd} - 1 \ge 2^{\depth}$,
    which holds if $\depth < \depth^* - 1$, 
    and so the optimal branching depth is at least $\depth^*-1$.
    Hence, the optimal number of cuts when $\depth^* \le \depthmax$ is $\ncutsZ(\depth^*)$ if $\ncutsZ(\depth^*-1) - \ncutsZ(\depth^*) \ge 2^{\depth^*}$, 
    and it is $\ncutsZ(\depth^*-1)$ otherwise.
    
    The last case to consider is when $\depth^* > \depthmax$, or equivalently $\targetbound < \rbd \depth^*$.
    For all $\depth \le \depthmax \le \depth^*-1$, including the pure \branchandboundx{} tree, $T_{\depth}$ has at most $2^{\depth^*-1} \le \ceil{\rbd/\cbd} - 1$ leaf nodes.
    For depths $\depth < \depthmax$, the lower bound in \eqref{eq:lower-bound-on-ncuts} implies that $\treetime(T_{\depth-1}) \ge \treetime(T_{\depth})$.
    However, at $\depth = \depthmax$, it is still possible that $\ncutsZ(\depthmax-1) < 2^{\depthmax}$.
    This precisely results in the last two cases in the definition of $k^*$ in the theorem statement.
\end{proof}

In \cref{thm:SVBcutGood}, we show that even in general, for $\lbd \ne \rbd$, it is always optimal to add at least one cut round for sufficiently large target bounds.

\begin{theorem}
\label{thm:SVBcutGood}
  If $0 < \cutbd \le \lbd \le \rbd$ and $\targetbound > \rbd\floor{\log_2 \ceil{\rbd/\cutbd}}$, then the minimal $\svbc[\lrc]$ tree proving a bound $\targetbound$ has at least one cut node.
\end{theorem}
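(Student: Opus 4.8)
The plan is to reduce to cut-and-branch trees, collapse the problem to a one-dimensional comparison, and then exhibit an explicit number of root cuts that strictly beats pure branching whenever $\targetbound$ is large. Since $\HCf = \allones$ here, \Cref{lem:cutFirst} applies, so some $\treeweight$-minimal tree places all its cut nodes on a path from the root. Hence the minimal achievable size equals $\min_{k \in \nonnegints}\left(k + \beta(\targetbound - k\cutbd)\right)$, where $\beta(Z)$ denotes the minimal size of a pure branching tree proving bound $Z$. The function $\beta$ is nondecreasing and satisfies $\beta(Z)=1$ for $Z \le 0$ together with the recursion $\beta(Z) = 1 + \beta(Z-\lbd) + \beta(Z-\rbd)$ for $Z>0$. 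Pure branching is the term $k=0$, i.e.\ size $\beta(\targetbound)$, so it suffices to find some $k\ge 1$ with $k + \beta(\targetbound - k\cutbd) < \beta(\targetbound)$; this forces every $\treeweight$-minimal tree to contain a cut node, since any cut-free tree has size at least $\beta(\targetbound)$.

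The technical core is a lower bound on $\beta$. Because $\lbd \le \rbd$ and $\beta$ is nondecreasing, we have $\beta(Z-\lbd) \ge \beta(Z-\rbd)$, so the recursion gives $\beta(Z) \ge 1 + 2\beta(Z-\rbd)$ for $Z>0$. I would iterate this $\ceil{Z/\rbd}$ times, checking that each intermediate argument stays positive, or equivalently induct on $\ceil{Z/\rbd}$, to obtain $\beta(Z) \ge 2^{\ceil{Z/\rbd}+1}-1$. Intuitively, every root-to-leaf path needs at least $\ceil{Z/\rbd}$ branch nodes, and the $\lbd\le\rbd$ asymmetry forces the tree to be at least as bushy as the balanced binary tree of that depth. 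This is the step I expect to require the most care, as it is where the hypothesis $\targetbound > \rbd\depth^*$, with $\depth^* = \floor{\log_2\ceil{\rbd/\cutbd}}$, is converted into a usable numerical bound.

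I would then instantiate $k = \ceil{\rbd/\cutbd} \ge 1$, so that $k\cutbd \ge \rbd$ and hence $\beta(\targetbound - k\cutbd) \le \beta(\targetbound - \rbd)$ by monotonicity. Applying the recursion once at $Z=\targetbound$,
\[
  \beta(\targetbound) - \left(k + \beta(\targetbound - \rbd)\right)
  = 1 + \beta(\targetbound - \lbd) - \ceil{\rbd/\cutbd},
\]
so the comparison reduces to showing $\beta(\targetbound - \lbd) \ge \ceil{\rbd/\cutbd}$. Here I would chain $\beta(\targetbound-\lbd) \ge \beta(\targetbound-\rbd) \ge 2^{\ceil{(\targetbound-\rbd)/\rbd}+1}-1 = 2^{\depthmax}-1$, using the lower bound from the previous paragraph and the identity $\ceil{(\targetbound-\rbd)/\rbd} = \depthmax - 1$. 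The hypothesis $\targetbound > \rbd\depth^*$ yields $\depthmax = \ceil{\targetbound/\rbd} \ge \depth^*+1$, while the definition of $\depth^*$ gives $\ceil{\rbd/\cutbd} < 2^{\depth^*+1}$ and hence $2^{\depth^*+1}-1 \ge \ceil{\rbd/\cutbd}$. Combining, $\beta(\targetbound-\lbd) \ge 2^{\depthmax}-1 \ge 2^{\depth^*+1}-1 \ge \ceil{\rbd/\cutbd}$, so the displayed difference is at least $1$. Thus the tree with $\ceil{\rbd/\cutbd}$ root cuts is strictly smaller than the best cut-free tree, and every $\treeweight$-minimal tree proving $\targetbound$ must contain at least one cut node.
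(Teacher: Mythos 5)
Your proof is correct and follows essentially the same route as the paper: both compare pure branching against the tree obtained by adding $\ceil{\rbd/\cutbd}$ root cuts, and both win because a pure branching tree proving $\targetbound$ has at least $2^{\ceil{\targetbound/\rbd}}$ prunable nodes, which exceeds $\ceil{\rbd/\cutbd}$ precisely under the hypothesis $\targetbound > \rbd\floor{\log_2\ceil{\rbd/\cutbd}}$. The only difference is presentational: the paper argues informally by pruning the leaves of an arbitrary pure branching tree, while you formalize the same count through the recursion $\beta(Z) = 1 + \beta(Z-\lbd) + \beta(Z-\rbd)$ and the induction giving $\beta(Z) \ge 2^{\ceil{Z/\rbd}+1}-1$.
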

\begin{proof}
  Consider a pure branching tree that proves $\targetbound$.
  The number of leaf nodes of this tree is at least $2^{\ceil{\targetbound/\rbd}}$, since $\lbd \le \rbd$, and all the parents of each of these leaf nodes have a remaining bound in $(0,\rbd]$ that needs to be proved.
  Now suppose we add $\ceil{\rbd/\cutbd}$ rounds of cuts.
  All of the leaf nodes of the pure \branchandboundx{} tree would then be pruned, since the parent nodes would already prove the desired target bound of $\targetbound$.
  As a result, 
  there is benefit to cutting when 
    $2^{\ceil{\targetbound/\rbd}} > \ceil{\rbd/\cutbd}$,
  which holds when
    $\targetbound > \rbd\floor{\log_2 \ceil{\rbd/\cutbd}}$.
\end{proof}

\begin{corollary}
\label{cor:SVBcutGood}
  If $0 < \cutbd \le \lbd \le \rbd$, then for $\bar{\targetbound} \defeq \rbd\floor{\log_2 \ceil{\rbd/\cutbd}}$, \emph{every} minimal $\svbc[\lrc]$ tree
  proving a bound $\targetbound > \bar{\targetbound}$  has at least $\ceil{\frac{\targetbound-\bar \targetbound}{\cbd}}$ cut nodes.
\end{corollary}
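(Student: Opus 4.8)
The plan is to strengthen \Cref{thm:SVBcutGood} from ``at least one cut'' to ``linearly many cuts'' by showing that, in \emph{any} minimal-sized $\svbc[\lrc]$ tree, the portion of $\targetbound$ that is proved by branching rather than cutting is bounded by a constant independent of $\targetbound$. Fix a minimal-sized tree $T$ proving $\targetbound$, let $k$ be the number of cut nodes of $T$, and set $g \defeq \targetbound - \cbd k$. It suffices to produce a constant $\bar\targetbound$, depending only on the ratio $\rbd/\cbd$, with $g \le \bar\targetbound$: this rearranges to $\cbd k \ge \targetbound - \bar\targetbound$, i.e.\ $k \ge \ceil{(\targetbound - \bar\targetbound)/\cbd}$, which is exactly the claim. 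Because the bound must hold for \emph{every} minimal tree, I cannot invoke \Cref{lem:cutFirst} to assume the cuts sit at the root; the argument must use only that $T$ has $k$ cuts and is size-minimal.

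First I would establish a size lower bound valid for an arbitrary tree with $k$ cut nodes. Along any root-to-leaf path there are at most $k$ cut nodes, each improving the bound by $\cbd$, while every branch node improves it by at most $\rbd$ since $\lbd \le \rbd$; hence, when $g > 0$, each leaf is reached through at least $\depth_k = \ceil{g/\rbd}$ branch nodes. Contracting the degree-one cut nodes turns $T$ into a full binary tree on its branch nodes and leaves in which every root-to-leaf path still contains at least $\depth_k$ branch nodes, so $T$ has at least $2^{\depth_k}$ leaves and therefore $\cardinality{\nodes_T} \ge 2^{\depth_k + 1} - 1 + k$.

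Next I would compare this against the pure cutting tree, which proves $\targetbound$ with $\ceil{\targetbound/\cbd}$ cut nodes and thus has size $\ceil{\targetbound/\cbd} + 1$. Minimality of $T$ forces $2^{\depth_k + 1} - 1 + k \le \ceil{\targetbound/\cbd} + 1$. The crux is then to substitute $\cbd k \ge \targetbound - \depth_k \rbd$ (which follows from $\depth_k \ge g/\rbd$) \emph{before} simplifying, so that the dominant $\targetbound/\cbd$ terms cancel on both sides and leave the purely combinatorial inequality $2^{\depth_k + 1} \le 3 + \depth_k\,\rbd/\cbd$. As the left-hand side is exponential and the right-hand side linear in $\depth_k$, only finitely many values of $\depth_k$ satisfy it; let $D$ be the largest such value, which depends only on $\rbd/\cbd$. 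Then $g \le \depth_k \rbd \le D\rbd$, and I would set $\bar\targetbound \defeq D\rbd$; the remaining case $g \le 0$ gives $k \ge \targetbound/\cbd \ge \ceil{(\targetbound - \bar\targetbound)/\cbd}$ directly.

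I expect the main obstacle to be securing a \emph{constant} $\bar\targetbound$: comparing the exponential lower bound against the linear pure-cutting upper bound in the obvious way only yields $g = O(\log \targetbound)$, which is too weak. The decisive move is to eliminate $k$ using its exact relation to $\depth_k$ before comparing, so that the residual inequality constrains only the branching depth and the ratio $\rbd/\cbd$ --- mirroring how the $\targetbound$-independent depth $\depth^*$ emerges in \Cref{thm:SVBcutGood_lequalsr} for the case $\lbd = \rbd$.
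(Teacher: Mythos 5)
The paper states this corollary without a proof, so there is no official argument to compare against; your proposal supplies a complete and correct one. Your route --- lower-bounding the size of an arbitrary tree with $k$ cut nodes by $2^{\depth_k+1}-1+k$ where $\depth_k = \ceil{(\targetbound-\cbd k)/\rbd}$, upper-bounding the minimal size by the pure-cutting tree of size $\ceil{\targetbound/\cbd}+1$, and then eliminating $k$ via $\cbd k \ge \targetbound - \depth_k\rbd$ to obtain $2^{\depth_k+1} \le 3 + \depth_k\rbd/\cbd$ --- checks out at every step, and it correctly yields a branching ``deficit'' $\targetbound - \cbd k \le D\rbd$ with $D$ depending only on $\rbd/\cbd$. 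This mirrors the mechanism behind \Cref{thm:SVBcutGood_lequalsr}, where the branching depth of an optimal tree is capped by the $\targetbound$-independent quantity $\depth^*$, but your version extends it to $\lbd \le \rbd$ and, crucially, to \emph{every} minimal tree rather than \emph{some} minimal tree: you are right that \Cref{lem:cutFirst} cannot be invoked under the universal quantifier, and your per-leaf counting (each leaf sees at most $k$ cut nodes and hence at least $\depth_k$ branch nodes, so the contracted branch skeleton has at least $2^{\depth_k}$ leaves) is exactly what is needed to avoid it. The only alternative derivation suggested by the paper's structure --- iterating \Cref{thm:SVBcutGood}, peeling off one root cut at a time --- would establish the count only for a minimal tree with cuts at the root, so your argument is, if anything, more faithful to the statement as written; your final observation that a naive exponential-versus-linear comparison gives only $\targetbound - \cbd k = O(\log\targetbound)$, and that the cancellation of the $\targetbound/\cbd$ terms is the decisive step, is also accurate.
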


\begin{example}
\label{ex:independent-set}
  The following example, from \citet*{BasConSumJia22_complexity-of-bb-and-cuts-II}, shows that \SVBC trees with constant $\cutbd \le \lbd = \rbd$ have been studied in the literature and that cuts not only decrease the size of a \branchandboundx{} tree, but in fact can lead to an exponential improvement.
  
  Consider the independent set problem, defined on a graph $G$ with vertices $V$ and edge set $E$, in which $G$ consists of $m$ disjoint triangles (cliques of size three):
    $
      \max_x \{ \sum_{v \in V} x_v : x \in \{0,1\}^{\cardinality{V}};\; x_u + x_v \le 1, \; \forall \,\{u,v\} \in E \}.
    $
  The optimal value is $m$, using $x_v = 1$ for exactly one vertex of every clique, while the linear relaxation has optimal value $3m/2$, obtained by setting $x_v = 1/2$ for all $v \in V$.

  Suppose we branch on $x_v$, $v \in V$, where $v$ belongs to a clique with vertices $u$ and $w$.
  In the left ($x_v \le 0$) branch, the objective value of the relaxation decreases by $\lbd = 1/2$ with respect to the parent.
  This is because the optimal values of the variables for all vertices except $u$, $v$, and $w$ remain unchanged, and the constraint $x_u + x_w \le 1$ along with $x_v \le 0$ implies that the objective contribution of the triangle $\{u,v,w\}$ is at most $1$, whereas at the parent node $x_u + x_v + x_w$ contributed 3/2 to the objective.
  We can attain that contribution of $1$ by setting either $x_u = 1$ and $x_w = 0$, or $x_w = 1$ and $x_u = 0$.
  Similarly, for the right branch, we can derive that $\rbd = 1/2$.

  Notice that once we branch on $x_v$, the remaining problem can be seen as fixing the values of the three variables corresponding to vertices in the triangle that $v$ belongs to, while keeping the remaining variables unchanged.
  In other words, it is a subproblem with exactly the same structure as the original one, except removing the decision variables for the vertices of a single clique.
  
  Finally, we look at families of cutting planes that we can derive.
  By adding up the three constraints corresponding to the edges of any triangle $\{u,v,w\}$, we obtain the implication $2(x_u + x_v + x_w) \le 3$.
  Since all variables are integer-restricted, we can infer that $x_u + x_v + x_w \le \floor{3/2} = 1$ for every clique.
  Each such cut corresponds to a change of $\cutbd=1/2$ in the objective,
  and there exists one such cut for every clique of three vertices.
  
  Hence, by \Cref{thm:SVBcutGood_lequalsr}, we have that, not counting cut nodes, the optimal depth of the $\svbc[\lrc]$ tree that proves the bound $\targetbound = m/2$ is $\delta^* = \floor{\log_2\ceil{\rbd/\cutbd}} = 0$. This implies that the optimal number of cut rounds is 
    \[
      k^* 
      = \ceil{\frac{\targetbound}{\cutbd}} 
      = \ceil{ \frac{m/2}{1/2} } 
      = m,
     \]
  for a corresponding tree with $m$ total nodes, compared to a pure branching tree
  that would %
  have depth $\ceil{\targetbound/\rbd} = m$ and thus $2^{m+1} - 1$ nodes, 
  which is exponentially many more than if cuts are used. \exqed
\end{example}

\section{Diminishing Cut Strength}
\label{sec:svbhc}

In the \SVBC model, the assumption of constant cut strength $\cbd$ implies that a tree with only cut nodes proving a bound $\targetbound$ has size $1+\ceil{\targetbound/\cbd}$, growing linearly with $\targetbound$.
Meanwhile, the tree size to prove the same bound by only branch nodes is exponential in $\targetbound$.
While \cref{ex:independent-set} illustrates that there exist cases where the constant cut strength assumption is satisfied,
a more realistic setting would reflect the empirically-observed phenomenon of diminishing marginal bound improvements from cuts~\citep{balas2010enumerative,dey2022cutting}.
In this section, we study tree size ($\HCf = \allones$) when cuts deteriorate in strength across rounds, for the special case that $\lbd = \rbd$.

\subsection{Empirical Motivation for Worsening Cuts}

\begin{figure}
    \centering
    \begin{subfigure}[b]{.7\textwidth}
        \includegraphics[width=\linewidth]{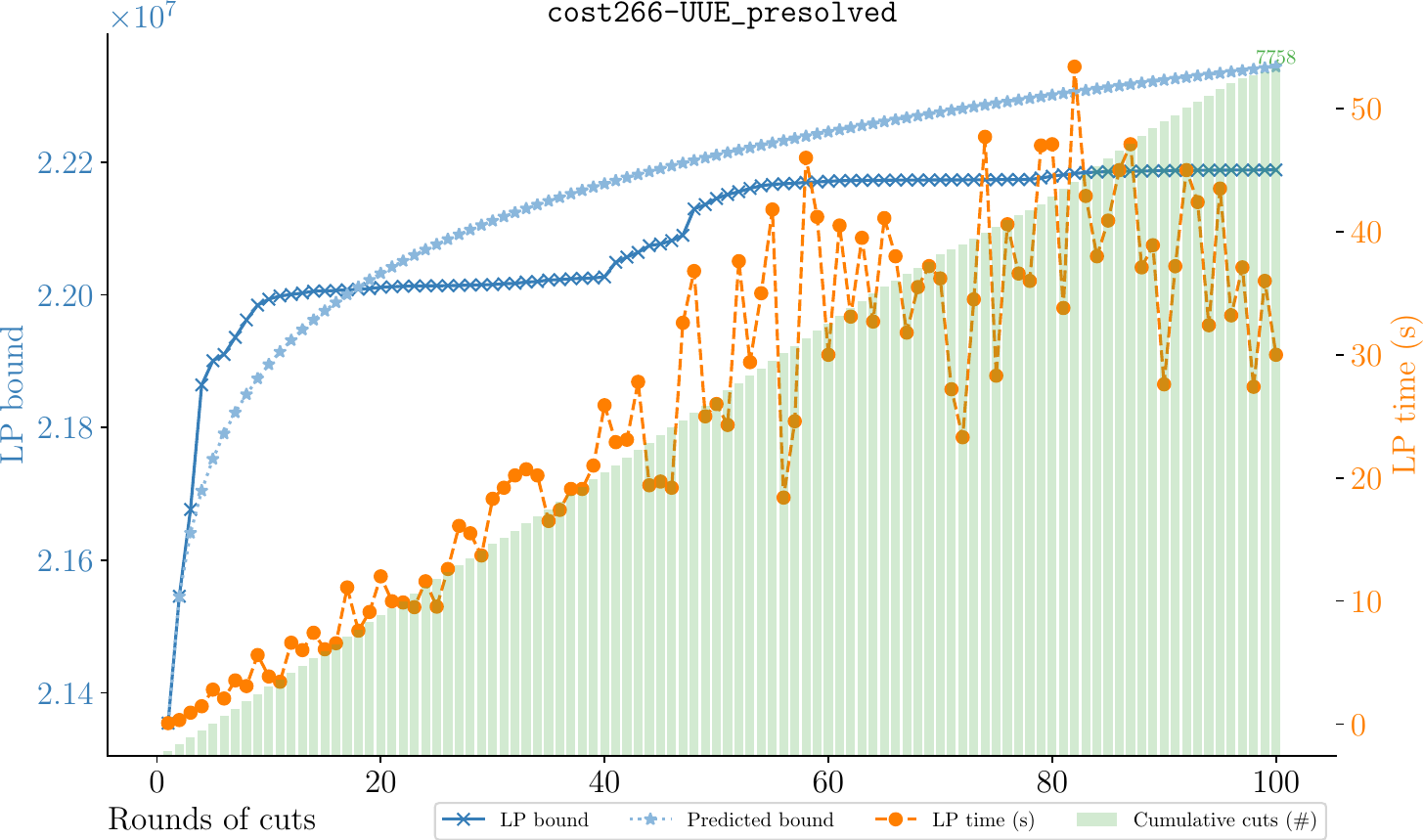}
    \end{subfigure}
    \\\vspace{1em}
    \begin{subfigure}[b]{.7\textwidth}
        \includegraphics[width=\linewidth]{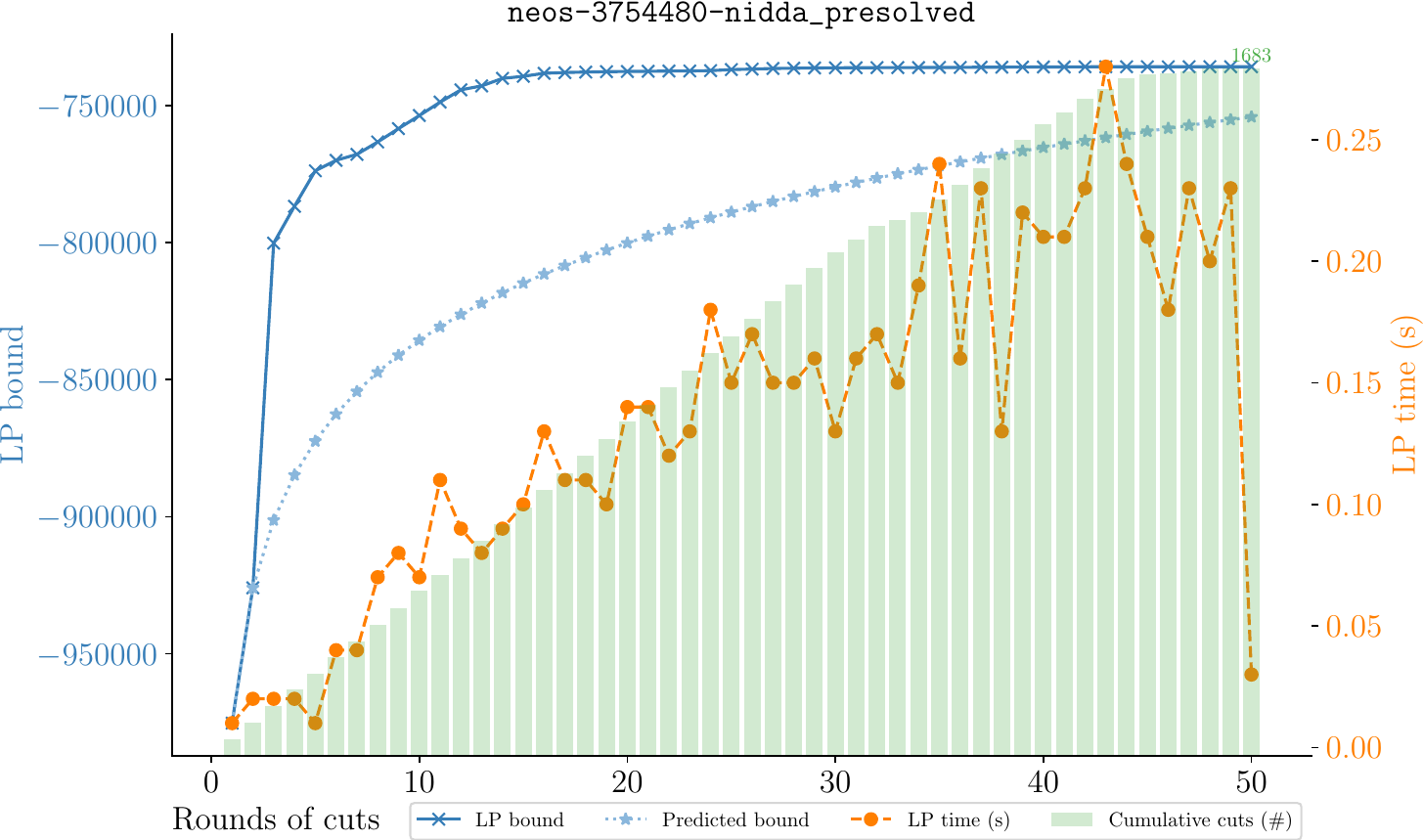}
    \end{subfigure}
    \caption{Applying rounds of Gomory cuts on two MIPLIB 2017 instances (after preprocessing) yields a diminishing bound improvement (LP bound) and a linear tendency for LP resolve time (LP time). The overlayed bar plot for each instance shows the cumulative number of cuts added after each round. The predicted bound using the improvement from the first round follows a logarithmic function that is similar to the actual bound evolution.}
    \label{fig:two-instances}
\end{figure}

We first test 12 instances from the 2017 Mixed Integer Programming Library~\cite{MIPLIB2017} as examples of the evolution of the bound as rounds of cuts are applied at the root node on an optimization solver.
The subset of instances is based on auxiliary testing showing that they have linear relaxations that solve relatively quickly and that Gomory mixed-integer cuts~\cite{Gomory60_gmic} nontrivially affect the bound.
We do not claim these instances are representative.

Specifically, we apply rounds of Gomory mixed-integer cuts to instances that are first presolved using Gurobi~\cite{Gurobi}.
Cuts are computed in each round with the CglGMI implementation in the Cut Generation Library~\cite{CglNoVersion}.
The linear relaxation after each round of cuts is solved using Clp~\cite{Clp}.
Cut generation is terminated after either 
    100 rounds of cuts have been applied, 
    one hour has elapsed, 
    or no cuts are generated in a given round.
Experiments are performed with a single thread on HiPerGator, a shared cluster through Research Computing at the University of Florida.

\cref{fig:two-instances} depicts the results for two instances.
Each plot shows four time series after rounds of cuts:
    the linear relaxation optimal value (``LP bound''), 
    the ``Predicted bound'' corresponding to the model in \Cref{sec:svbhc-def},
    the number of seconds it takes to solve the linear relaxation (``LP time''), 
    and the total number of cuts in the relaxation (``Cumulative cuts'').
The results for the remaining 10 instances are plotted in \cref{fig:more-instances} in \Cref{appendix:more-instances}.
For the predicted bound series, the bound in round $t$ is based on the first round of cuts: if $z_0$ is the initial linear programming relaxation optimal value, and $z_1$ is the value of the linear relaxation after one round of cuts, then the predicted bound $\tilde{z}_t$ at round $t$ is computed as $\tilde{z}_t \defeq \tilde{z}_{t-1} + (z_1-z_0)/t$, where $\tilde{z}_0 \defeq z_0$.

Across the instances, we observe that cuts tend to provide diminishing marginal bound improvements as more rounds are applied.
Furthermore, the predicted bound follows a logarithmic function that reflects the general trend in the bound, though it becomes less accurate in later rounds when there is more substantial tailing off in bound improvement.

The model introduced next only aims to capture the relative decrease in how cuts affect the bound across rounds.
The time to solve the linear relaxation as a function of number of cut rounds is further discussed in \Cref{sec:general-cut-time}.

\subsection{\SVBWCText}
\label{sec:svbhc-def}
We define a model in which the total bound improvement by $k$ cuts is the $k$th harmonic number scaled by a constant parameter $\cbd$,
so that the number of cuts needed to prove a bound $\targetbound$ grows exponentially in $\targetbound$.
Thus, we have two exponential-time procedures (pure cutting and pure branching) that can work together to prove the target bound.
Let $H:\nonnegints \to \nonnegreals$ denote the $k$th harmonic number $H(k) \defeq \sum_{i=1}^k 1/i$.

\begin{definition}[\SVBWCText{} (\SVBWC)]
\label{def:svbwc}
A \bc tree is a \emph{\SVBWCText{} (\SVBWC)} tree with parameters $\lrcw[\lbd,\rbd;\cutbd,\HCf]$, or $\svbwc$ tree, if the bound improvement value associated with each branch node is $(\lbd, \rbd)$, the bound improvement by cut node $k$ is $\cutbd / k$, and the \timefunction{} is $\HCf$.
\end{definition}

\Cref{lem:cutFirst} can be extended to this setting when $\HCf = \allones$.
Hence, without loss of generality, we only need to consider cut-and-branch trees.

When the bound improvement by each cut node is a constant $\cbd$, \cref{thm:SVBcutGood_lequalsr} shows that for any target $\targetbound$,
at most $\rbd \depth^*$ of the bound (a constant independent of $\targetbound$) is proved by branching, and the rest by cutting.
However, this is no longer true when cuts exhibit diminishing returns.
The proof of \Cref{thm:SVBcutGood_lequalsr} hinges on \Cref{lem:svbc-opt-to-cut-layers},
from which we know that analyzing the optimal number of root-node cuts is equivalent to understanding the optimal depth of the branching component. %
As \Cref{lem:minPossibility} will show, it continues to be sufficient to analyze the number of branching layers in the \SVBWC setting; 
the main difference is that we no longer have an exact analytical expression for
the number of cuts such that the branching component has depth $\depth$, 
which requires us to find the minimum integer $k$ such that cuts prove a bound of $\targetbound - \depth \rbd$, i.e.,
	\[
		 \targetbound - \depth \rbd
		 \le
		\sum_{i=1}^k \frac{\cbd}{i}
		= \cbd H(k).
	\]
Define
    \begin{equation*}
        \ncutsvbwc(\depth) \defeq 
            \min_k \left\{
            k \in \nonnegints \suchthat 
                H(k) \ge \frac{\targetbound - \depth \rbd}{\cbd}
            \right\}.
    \end{equation*}
Note the similarity to the definition of $\ncutsZ(\depth) = \max\{0, \ceil{(\targetbound - \depth \rbd) / \cbd}\}$.
As there is currently no proved exact analytical expression for $H(k)$ and $\ncutsvbwc(\depth)$, we avail of well-known bounds on these functions
to \emph{approximate} the value $k$ for the minimum number of cuts needed to achieve a branching depth of $\depth$.

Let $H^{-1}(x) \defeq \min_k \{ k \in \nonnegints \suchthat H(k) \ge x \}$,
so that 
    $\ncutsvbwc(\depth) = H^{-1}((\targetbound-\depth \rbd)/\cbd)$.
\cref{lem:harmIneq} restates well-known bounds on $H(k)$ and $H^{-1}(x)$.
\begin{lemma} \label{lem:harmIneq}
\mbox{}
\begin{enumerate}
    \item For any $z\in \posints$, $\ln (z+1) < H(z) \le \ln(z) + 1$.
    \item It holds that $H^{-1}(0) = 0$, $H^{-1}(x) = 1$ for any $x \in (0,1]$, and for any $x\in (1, \infty)$, $e^{x-1} \le H^{-1}(x) < e^{x}-1$.
\end{enumerate}
\end{lemma}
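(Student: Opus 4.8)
The plan is to treat the two parts separately, obtaining Part 2 by inverting the elementary bounds proved in Part 1.

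For Part 1, I would use the standard integral (Riemann-sum) comparison, exploiting that $1/t$ is strictly decreasing on $\posreals$. Writing $\ln(z+1) = \int_1^{z+1}\frac{dt}{t} = \sum_{i=1}^{z}\int_i^{i+1}\frac{dt}{t}$ and bounding each term by $\int_i^{i+1}\frac{dt}{t} < \frac1i$ (strict, since $1/t < 1/i$ on the open interval $(i,i+1)$) yields the lower bound $\ln(z+1) < H(z)$. For the upper bound I would peel off the first term and compare the tail against an integral taken from the left: for $i \ge 2$, $\frac1i = \int_{i-1}^{i}\frac1i\,dt \le \int_{i-1}^{i}\frac{dt}{t}$, so $H(z)-1 = \sum_{i=2}^{z}\frac1i \le \int_1^{z}\frac{dt}{t} = \ln z$, i.e.\ $H(z) \le \ln z + 1$, with equality at $z=1$ consistent with the non-strict inequality.

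For Part 2, the two special values are immediate from the convention $H(0)=0$ (empty sum) and $H(1)=1$: since $H(0)=0\ge 0$, the minimizing index for $x=0$ is $k=0$; and for $x\in(0,1]$ we have $H(0)=0 < x \le 1 = H(1)$, so $k=1$ is the smallest feasible index. For $x\in(1,\infty)$, set $k \defeq H^{-1}(x)$; since $H(1)=1<x$ we get $k\ge 2$, so by minimality $H(k-1) < x \le H(k)$ with both $k$ and $k-1$ in $\posints$, which lets me apply Part 1 to each. The lower bound follows from the Part 1 upper bound applied to $H(k)$: from $x \le H(k) \le \ln k + 1$ I get $\ln k \ge x-1$, hence $k \ge e^{x-1}$. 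The upper bound follows from the Part 1 lower bound applied to $H(k-1)$: $\ln k = \ln((k-1)+1) < H(k-1) < x$, so $\ln k < x$.

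I expect the delicate point to be pinning down the exact constant in the upper bound and reconciling the real-valued estimates on $H$ with the integer-valued $H^{-1}$. The clean term-by-term inversion of $\ln(z+1) < H(z)$ gives $\ln k < x$, i.e.\ $H^{-1}(x) < e^{x}$; sharpening this to the stated $e^{x}-1$ (equivalently replacing $\ln k$ by $\ln(k+1)$) is the step that needs the most care, as it does not come from the same comparison and is sensitive to rounding near $x=1$. A cleaner route that lands on the stated form is the \emph{sufficiency} direction: for any integer $z \ge e^{x}-1$ one has $z+1 \ge e^{x}$, so $H(z) > \ln(z+1) \ge x$, making every such $z$ feasible and giving $H^{-1}(x) \le \ceil{e^{x}-1}$. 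I would use whichever of these two phrasings matches the precise constant required by the approximation of $\ncutsvbwc(\depth)$ downstream.
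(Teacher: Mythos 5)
The paper states \cref{lem:harmIneq} without proof, presenting it as a restatement of well-known bounds, so there is no argument of the authors' to compare yours against. Your integral-comparison proof of Part~1 is correct (including the observation that equality at $z=1$ forces the non-strict upper bound), and so is your treatment of the special values and of the lower bound in Part~2: setting $k \defeq H^{-1}(x) \ge 2$, using minimality to get $H(k-1) < x \le H(k)$, and inverting $x \le H(k) \le \ln k + 1$ is exactly the right move.

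Your hesitation about the upper bound is justified, and it is not merely a matter of care over rounding: the inequality $H^{-1}(x) < e^x - 1$ as printed is false for $x$ slightly above $1$. Take $x = 1.05$: then $H(1) = 1 < x \le 1.5 = H(2)$, so $H^{-1}(x) = 2$, while $e^{1.05} - 1 \approx 1.86 < 2$; the failure occurs exactly for $x \in (1, \ln 3]$. Both of your routes yield the correct, slightly weaker statement: from $\ln k = \ln\bigl((k-1)+1\bigr) < H(k-1) < x$ you get $H^{-1}(x) < e^x$, and the sufficiency direction gives $H^{-1}(x) \le \ceil{e^x} - 1$, which again is $< e^x$ but need not be $< e^x - 1$, since $\ceil{e^x} - 1 \ge e^x - 1$. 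So you should prove and state the bound as $H^{-1}(x) < e^x$ (neither phrasing will recover the printed constant, because the printed constant is unattainable). Downstream this only shifts the upper-bounding function in \cref{lem:TreeSizeBound} from $e^{\targetbound_\depth/\cbd} + 2^{\depth+1} - 2$ to $e^{\targetbound_\depth/\cbd} + 2^{\depth+1} - 1$, which changes neither its continuous minimizer in \cref{lem:ubMin} nor, materially, the approximation constants in \cref{thm:ApxMAIN}.
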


\subsection{Overview of \cref{alg:SVBWC} Approximating Optimal Number of Cuts}
We consider the case where $\lbd = \rbd$, but could be different from $c$.
In \Cref{alg:SVBWC}, we approximate the number of cut nodes in a minimal \SVBWC tree.
Our main result, stated in \Cref{thm:ApxMAIN}, is that the tree with this number of cuts at the root and the remaining bound proved by branching is no more than a multiplicative factor larger than the minimal-sized tree.

\begin{algorithm}[b]
\begin{algorithmic}[1]
\Require $\rbd, \cbd, \targetbound$.
\Ensure Number of cuts $k$ to be used before proving the remaining bound by branching.
\State 
    $\ubFunOptCont \gets 
      \left({
            \targetbound
            + \cbd \ln (\frac{\rbd}{\cbd \ln 4})
            }\right)/\left({
            \rbd + \cbd \ln 2
            }\right).
    $
    \Comment{Continuous minimizer of $\ubFun$ in \Cref{lem:ubMin}.}
\State
	$\hat{\depth}^* \gets \floor{(\targetbound - \cbd) / \rbd}$.
	\Comment{Maximum depth for which \Cref{lem:TreeSizeBound} bounds apply.}
\State \label{step:depth12_round-continuous-minimizer}%
    $\depth_1 \gets \lfloor \ubFunOptCont \rfloor$,
    $\depth_2 \gets \lceil \ubFunOptCont \rceil$.
    \Comment{Integer minimizer for $\ubFun$ is a rounding of $\ubFunOptCont$.}
\State \label{step:depth3_largest-possible}%
    $\depth_3 \gets \hat{\depth}^* + 1$.
    \Comment{Only other possible minimal tree branching depth per \Cref{lem:minPossibility}.}
\State \label{step:return-ncuts}%
    Return $\ncutsvbwc(\delta)$ for $\delta \in \argmin_\depth \left\{ \ncutsvbwc(\depth) + 2^{\depth + 1} - 1 \suchthat \depth \in \{\depth_1,\depth_2,\depth_3\} \right\}$.
\end{algorithmic}
\caption{Approximating the number of cuts to be used in \SVBWC}\label{alg:SVBWC}
\end{algorithm}

\begin{theorem}\label{thm:ApxMAIN}
When $\lbd = \rbd$, let $T$ denote the $\svbwc[\lrc]$ cut-and-branch tree $T$ that proves a bound of $\targetbound$ using the number of cut nodes prescribed by \cref{alg:SVBWC}.
Let $T^\star$ denote a minimal-size \SVBWC tree proving bound $\targetbound$.
Then $\treetime(T) \le \max \{8,  e^{1+{\rbd}/{\cbd}} \} \treetime(T^\star)$.
\end{theorem}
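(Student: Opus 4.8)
The plan is to reduce the theorem to a one-dimensional optimization over the branching depth and then control the estimation error and the discretization error separately. By the \SVBWC extension of \Cref{lem:cutFirst} together with \Cref{lem:minPossibility}, any minimal tree may be taken to be cut-and-branch, so its size is $g(\depth) \defeq \ncutsvbwc(\depth) + 2^{\depth+1} - 1$ for some branching depth $\depth$, and the optimal depth $\optdepth$ lies in $\{0,\dots,\hat{\depth}^*+1\}$, where $\hat{\depth}^* = \floor{(\targetbound-\cbd)/\rbd}$ is the largest depth at which more than one cut round is still required. I would first record that for $\depth \le \hat{\depth}^*$ the remaining bound satisfies $(\targetbound - \depth\rbd)/\cbd > 1$, so \Cref{lem:harmIneq} sandwiches $\ncutsvbwc(\depth) = H^{-1}((\targetbound-\depth\rbd)/\cbd)$ between $e^{(\targetbound-\depth\rbd)/\cbd-1}$ and $e^{(\targetbound-\depth\rbd)/\cbd}-1$; adding $2^{\depth+1}-1$ gives the continuous under- and over-estimators with $\lbFun(\depth) \le g(\depth) \le \ubFun(\depth)$ on $[0,\hat{\depth}^*]$ (this is \Cref{lem:TreeSizeBound}). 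Since $\ubFun$ is a convex function of $\depth$, \Cref{lem:ubMin} identifies its continuous minimizer $\ubFunOptCont$, whence its integer minimizer is $\depth_1$ or $\depth_2$; the algorithm evaluates the \emph{true} size $g$ at $\depth_1,\depth_2$ and at the boundary depth $\depth_3=\hat{\depth}^*+1$, returning the best.

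The core estimate is a monotone-sandwich comparison, which I would run whenever both the depth the algorithm selects and the optimal depth $\optdepth$ lie in the valid range $[0,\hat{\depth}^*]$. Writing $\hat\depth$ for the integer minimizer of $\ubFun$ among $\{\depth_1,\depth_2\}$, the algorithm's size is at most $g(\hat\depth) \le \ubFun(\hat\depth) \le \ubFun(\optdepth)$, while the optimum is at least $\lbFun(\optdepth)$; hence the ratio $\treetime(T)/\treetime(T^\star)$ is at most $\ubFun(\optdepth)/\lbFun(\optdepth)$. Because $\ubFun$ and $\lbFun$ differ only by replacing $e^{x}$ with $e^{x-1}$ in the exponential term and by a harmless additive constant in the power-of-two term, a one-line estimate bounds this quotient by $e$, comfortably below both $8$ and $e^{1+\rbd/\cbd}$. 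The strength of this step is that it requires no matching of minimizers: it suffices that $\hat\depth$ be \emph{globally} optimal for $\ubFun$, so the only loss is the $H^{-1}$ estimation gap (a factor $e$) and never a depth mismatch.

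The remaining, and I expect hardest, work is the boundary regime, where the clean sandwich fails because either $\optdepth = \hat{\depth}^*+1$ or the continuous minimizer $\ubFunOptCont$ overshoots $\hat{\depth}^*$, so that $\hat\depth$ is no longer a depth at which $g \le \ubFun$ is guaranteed. When $\optdepth = \hat{\depth}^*+1 = \depth_3$ the algorithm evaluates $g$ at exactly that depth and the ratio is one. The delicate case is $\ubFunOptCont > \hat{\depth}^*$ with $\optdepth \le \hat{\depth}^*$: here I would argue that $\ubFun$, and hence after the estimation gap also $g$, is still decreasing across $\hat{\depth}^*$, pin the optimum to within one branching layer of $\hat{\depth}^*$, and compare $g(\depth_3)$ --- whose cut term is at most one and whose branch term is $2^{\hat{\depth}^*+2}$ --- against $\lbFun(\optdepth)$. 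Crossing one branching layer doubles the $2^{\depth+1}$ term and shifts the cut-term exponent by $\rbd/\cbd$ (since $(\targetbound-\depth\rbd)/\cbd$ grows by $\rbd/\cbd$ per layer), which, combined with the estimation factor $e$, produces the $e^{1+\rbd/\cbd}$ bound; the constant $8=2^3$ absorbs the worst case where the power-of-two term dominates and the optimum may sit a bounded number of layers from the three evaluated depths. Taking the maximum over the two regimes yields the stated factor $\max\{8,\,e^{1+\rbd/\cbd}\}$.
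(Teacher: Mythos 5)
Your proposal is correct in its overall architecture and reuses the paper's scaffolding --- the reduction to a choice of branching depth via \Cref{lem:minPossibility}, the sandwich $\lbFun(\depth) \le g(\depth) \le \ubFun(\depth)$ on $[0,\hat{\depth}^*]$ from \Cref{lem:TreeSizeBound}, the convex minimizer of \Cref{lem:ubMin}, and the explicit check at $\depth_3 = \hat{\depth}^*+1$ --- but your central comparison takes a genuinely different, and cleaner, route. The paper bounds $\treetime(T_{\ubFunOpt})/\treetime(T^\star)$ by $\ubFun(\ubFunOpt)/\lbFun(\lbFunOpt)$, i.e., it evaluates the two bounding functions at their \emph{respective} integer minimizers; because these can differ, it needs \Cref{lem:lbubMinDiff} and a three-case analysis, and it is precisely the mismatch cases $\lbFunOpt = \ubFunOpt + 1$ and $\ubFunOpt - \lbFunOpt \le 2$ that generate the constants $e^{1+\rbd/\cbd}$ and $8$. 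Your chain $\treetime(T) \le g(\hat{\depth}) \le \ubFun(\hat{\depth}) \le \ubFun(\optdepth) \le e\,\lbFun(\optdepth) \le e\,\treetime(T^\star)$ evaluates both bounding functions at the \emph{single} depth $\optdepth$, which makes \Cref{lem:lbubMinDiff} unnecessary and yields the sharper factor $e$ whenever $\hat{\depth}$ and $\optdepth$ both lie in $[0,\hat{\depth}^*]$; the step $\ubFun(\depth)/\lbFun(\depth) \le e$ is indeed a one-liner since $e^{x} = e\cdot e^{x-1}$ and $2^{\depth+1}-2 \le e\,(2^{\depth+1}-1)$.

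Two caveats. First, your attribution of the constants $8$ and $e^{1+\rbd/\cbd}$ to the boundary regime does not reflect where they come from in the paper (the minimizer mismatch), and in your scheme the boundary regime in fact needs only a modest constant: if $\ubFunOptCont > \hat{\depth}^*$ then $\ubFun$ is decreasing on $[0,\hat{\depth}^*]$ by convexity, $\targetbound_{\hat{\depth}^*+1} < \cbd$ forces $\ncutsvbwc(\hat{\depth}^*+1) \le 1$, so $g(\depth_3) \le 2^{\hat{\depth}^*+2} \le 2\,\ubFun(\hat{\depth}^*) \le 2\,\ubFun(\optdepth) \le 2e\,\treetime(T^\star)$, comfortably below $8$. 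Second, your boundary discussion as written (``pin the optimum to within one branching layer,'' ``the constant $8$ absorbs the worst case'') is a sketch rather than an argument; it names the right ingredients (monotonicity of $\ubFun$ past $\hat{\depth}^*$, the explicit evaluation at $\depth_3$, the doubling of the $2^{\depth+1}$ term), but you would need to carry out a computation like the one above to close it. Once that is done, your route proves the theorem --- in fact with a better constant, roughly $\max\{e, 2e\}$, than the stated $\max\{8, e^{1+\rbd/\cbd}\}$.
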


We recommend deferring the reading of \cref{alg:SVBWC} until the end of the section, as its meaning is rooted in the results that follow.
Intuitively, the algorithm is analogous to \Cref{thm:SVBcutGood_lequalsr}, in that an approximately-optimal tree size can be obtained from checking only one of a few possible values for the branching component depth.
We must compute $H^{-1}$ for one of these values, but this is inexpensive given the conjectured tight bounds mentioned above.

The rest of the section is dedicated to proving \cref{thm:ApxMAIN} by a series of lemmas, organized as follows. 
\cref{lem:minPossibility} significantly reduces the search space of the optimal number of cuts to finitely many options, based on the depth of the branching component of the tree.
\cref{lem:TreeSizeBound} provides bounds on tree size as a function of the branching component depth.
These bounds apply at all but the largest possible depth from \cref{lem:minPossibility}.
\cref{lem:lbMin,lem:ubMin} find the continuous minimizers of the lower- and upper-bounding functions of the total tree size.
As the depth of the branching component must be integral, convexity implies that the integer minimizers of the bounding functions can be obtained by rounding the continuous minimizers.
\cref{lem:lbubMinDiff} bounds the difference between the integer minimizers of the lower and upper bound functions. 
Finally, the proof of \cref{thm:ApxMAIN} shows that a branching component depth set as the integer minimizer of the upper-bounding function provides the desired approximation factor to minimal tree size, when the upper-bounding function applies, and the only other possible depth is explicitly checked.

\subsection{Bounding \SVBWC{} Tree Sizes}
\Cref{lem:minPossibility} is a refined analogue of \Cref{lem:svbc-opt-to-cut-layers}, stating that the optimal number of cut nodes in a minimal-sized \SVBWC tree must correspond to $\ncutsvbwc(\depth)$ for a restricted possible range of $\depth$, given that we allow for $\cbd > \rbd$ in this context.
This restricted range of $\depth$ is later used to apply bounds on tree size in \Cref{lem:TreeSizeBound}.
Recall that the pure branching tree has depth
    $
        \depthmax \defeq \ceil{\targetbound / \rbd}.
    $

\begin{lemma}\label{lem:minPossibility}
In any minimal $\svbwc[\lrc]$ tree with $\lbd=\rbd$ that proves bound 
    $\targetbound$, 
the number of cut nodes in the tree is 
$\ncutsvbwc(\depth)$, for some branching depth
    $\depth \in \{0,\ldots,\max\{0,\depthmax - \floor{\cbd/\rbd}\}\}.$
\end{lemma}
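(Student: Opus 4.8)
The plan is to prove the statement in two stages. First I would show that, after reducing to cut-and-branch trees, the number of cut nodes in any minimal tree must equal $\ncutsvbwc(\depth)$ for whatever depth $\depth$ its branching component attains. Second, and this is the real content, I would show that a minimal tree cannot have branching depth exceeding $\max\{0,\depthmax-\floor{\cbd/\rbd}\}$.

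For the first stage, I would invoke the \SVBWC extension of \Cref{lem:cutFirst} to assume all cut nodes lie on a path from the root, followed by a branching component. Since $\lbd=\rbd$, every branch node contributes exactly $\rbd$ on both sides, so a minimal branching component proving a residual bound $B$ must be a perfect binary tree with all leaves at depth $\ceil{B/\rbd}$: any leaf at smaller depth has bound strictly below $B$ and would have to be branched further. Thus, once the branching depth $\depth$ is fixed, the cuts must prove the residual bound $\targetbound-\depth\rbd$, and the minimum number of cuts achieving this is exactly $\min_k\{k\in\nonnegints : \cbd H(k) \ge \targetbound-\depth\rbd\} = \ncutsvbwc(\depth)$. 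This mirrors \Cref{lem:svbc-opt-to-cut-layers} and establishes that the cut count of any minimal tree equals $\ncutsvbwc(\depth)$ for its branching depth.

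The crux is the upper bound on $\depth$. The key observation is that the single strongest cut already improves the bound by $\cbd/1=\cbd$, which is worth $\floor{\cbd/\rbd}$ branching layers. Concretely, writing $\cbd=\floor{\cbd/\rbd}\rbd + s$ with $0\le s<\rbd$, a short ceiling computation gives $\ceil{(\targetbound-\cbd)/\rbd}=\ceil{\targetbound/\rbd - s/\rbd}-\floor{\cbd/\rbd}\le \depthmax-\floor{\cbd/\rbd}$. Hence a single cut suffices to drive the required branching depth down to $\depth_0\defeq\max\{0,\depthmax-\floor{\cbd/\rbd}\}$, i.e.\ $\ncutsvbwc(\depth_0)\le 1$. (If $\depthmax\le\floor{\cbd/\rbd}$, the same estimate shows $\targetbound-\cbd\le 0$, so one cut already proves $\targetbound$ and the optimal depth is $0$, matching $\depth_0=0$.) To finish I would compare sizes: the tree $T_\depth$ with branching depth $\depth$ has size $\ncutsvbwc(\depth)+2^{\depth+1}-1$, so $\treetime(T_{\depth_0})\le 2^{\depth_0+1}$, whereas for any $\depth>\depth_0$ we have $\treetime(T_\depth)\ge 2^{\depth+1}-1\ge 2^{\depth_0+2}-1>2^{\depth_0+1}\ge\treetime(T_{\depth_0})$. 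Thus every depth exceeding $\depth_0$ is strictly suboptimal, so a minimal tree has branching depth in $\{0,\ldots,\depth_0\}$.

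The ceiling identity and the final size comparison are routine. I expect the main obstacle to be making the first-stage reduction fully rigorous: specifically, justifying that with $\lbd=\rbd$ a minimal branching component is genuinely a perfect binary tree even when cuts worsen across rounds, which hinges on the \SVBWC extension of \Cref{lem:cutFirst} so that no cut nodes are interleaved inside the branching component and every residual bound is proved uniformly at all leaves.
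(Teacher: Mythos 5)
Your proof is correct, but the key step---ruling out branching depths above $\depthmax-\floor{\cbd/\rbd}$---is argued differently from the paper. The paper uses a local exchange: since the $k$th cut improves the bound by $\cbd/k$, each of the first $\floor{\cbd/\rbd}$ cuts is worth at least $\rbd$, hence at least as much as a branching step that doubles the subtree below it, so (unless cuts alone already prove $\targetbound$) every minimal tree contains at least $\floor{\cbd/\rbd}$ cut nodes, and each of these saves a full branching layer. You instead exhibit one explicit feasible tree---a single cut followed by a perfect branching component of depth $\depth_0=\max\{0,\depthmax-\floor{\cbd/\rbd}\}$, of size at most $2^{\depth_0+1}$ since $\ncutsvbwc(\depth_0)\le 1$ (your arithmetic $\targetbound-\depth_0\rbd\le\floor{\cbd/\rbd}\rbd\le\cbd$ checks out)---and observe that any tree whose branching component is deeper already has at least $2^{\depth_0+2}-1>2^{\depth_0+1}$ nodes from branching alone, so it cannot be minimal. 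Your global counting argument is somewhat more self-contained: it avoids having to make precise what ``replacing a branch node by a cut node'' does to the remainder of the tree, which the paper leaves informal. The paper's exchange argument yields a little more information (a lower bound of $\floor{\cbd/\rbd}$ on the number of cuts a minimal tree actually uses), though that is not needed for the lemma. Your first stage---minimal trees are cut-and-branch with a perfect branching component and exactly $\ncutsvbwc(\depth)$ cuts---coincides with the paper's appeal to \Cref{lem:svbc-opt-to-cut-layers} and the \SVBWC extension of \Cref{lem:cutFirst}, and is treated at the same level of rigor in both write-ups.
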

\begin{proof}
    As in \Cref{lem:svbc-opt-to-cut-layers}, it is clear that the optimal number of cut nodes is $\ncutsvbwc(\depth)$ for some depth $\depth$ of the branching component.
    If a single cut node proves at least as much bound as two branch nodes,
    it is better to add the cut rather than branch, as long as the target bound has not been attained.
    Hence, the minimal-sized \SVBWC tree will have at least $k$ cuts, where $k$ is the maximum integer such that 
        $ \cbd / k \ge \rbd $
    or $H(k) \ge \targetbound$.
    If the latter holds, then the optimal branching depth is $0$.
    Otherwise, for a large enough target bound,
    the former inequality implies that at least $\floor{\cbd / \rbd}$ cut nodes will be used.
    Moreover, as each of these cut nodes will yield at least $\rbd$ bound improvement, the remaining bound by branching only requires a depth of at most $\depthmax - \floor{\cbd/\rbd}$.
\end{proof}

When $\lbd = \rbd$, an $\svbwc[\lrc]$ cut-and-branch tree that proves bound $\targetbound$ and for which the depth of the branching component is $\depth$
has size $\ncutsvbwc(\depth) + 2^{\depth+1} - 1$,
but $\ncutsvbwc(\depth)$ is not explicit.
In the lemma below, we provide functions $\lbFun$, $\ubFun$ which respectively provide lower and upper bounds for \SVBWC tree sizes.

\begin{lemma} \label{lem:TreeSizeBound}
    When $\lbd = \rbd$, consider an $\svbwc[\lrc]$ cut-and-branch tree proving a target bound $\targetbound$,
    where the branching component has depth $\depth$.
    Let $\targetbound_\depth \defeq \targetbound-\depth \rbd$ denote the bound to be proved by cut nodes when the branching component has depth $\depth$.
    Then the size of the tree is
    equal to $2^{\depth+1}-1$ if $\targetbound_\depth \le 0$,
    equal to $2^{\depth+1}$ if $\targetbound_\depth \in (0, \cbd]$,
    and otherwise, for all
        $
			\depth 
			\le 
			\hat{\depth}^* \defeq \floor{(\targetbound - \cbd) / \rbd},
		$
    \begin{enumerate}
        \item
            at least 
            $
                \lbFun(\depth) \defeq 
                    e^{\targetbound_\depth / \cbd - 1}
                    + 2^{\depth+1} - 1.
            $
        \item at most 
            $
                \ubFun(\depth) \defeq 
                e^{\targetbound_\depth / \cbd}
                + 2^{\depth+1} - 2.
            $
    \end{enumerate}
\end{lemma}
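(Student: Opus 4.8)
The plan is to treat the $2^{\depth+1}-1$ nodes of the branching component as an exact, known quantity (this is where $\lbd=\rbd$ is used, making the branching component a full binary tree) and to reduce the entire lemma to controlling the number of root cuts $\ncutsvbwc(\depth)$. Recalling the size identity stated just before the lemma, a cut-and-branch tree whose branching component has depth $\depth$ has exactly $\ncutsvbwc(\depth) + 2^{\depth+1} - 1$ nodes, and $\ncutsvbwc(\depth) = H^{-1}(\targetbound_\depth/\cbd)$. Thus the three regimes of the statement will match the three regimes of $H^{-1}$ listed in \Cref{lem:harmIneq}(2), and the whole argument reduces to substituting the harmonic bounds once the case split on $x \defeq \targetbound_\depth/\cbd$ is set up correctly.

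First I would handle the two exact (boundary) cases. If $\targetbound_\depth \le 0$, there is no residual bound for cuts to prove, so $\ncutsvbwc(\depth) = H^{-1}(\targetbound_\depth/\cbd) = 0$ by the first clause of \Cref{lem:harmIneq}(2), and the size is exactly $2^{\depth+1}-1$. If $\targetbound_\depth \in (0,\cbd]$, then $x \in (0,1]$, so \Cref{lem:harmIneq}(2) gives $H^{-1}(x) = 1$ (one cut improves the bound by $\cbd H(1) = \cbd \ge \targetbound_\depth$, whereas zero cuts do not), and the size is exactly $1 + (2^{\depth+1}-1) = 2^{\depth+1}$.

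For the remaining case I would first check the range hypothesis: $\depth \le \hat{\depth}^* = \floor{(\targetbound-\cbd)/\rbd}$ forces $\targetbound_\depth = \targetbound - \depth\rbd \ge \cbd$, and in the ``otherwise'' branch $\targetbound_\depth > \cbd$, so $x = \targetbound_\depth/\cbd \in (1,\infty)$ and the two-sided estimate $e^{x-1} \le H^{-1}(x) < e^{x}-1$ of \Cref{lem:harmIneq}(2) applies. Substituting $H^{-1}(x) = \ncutsvbwc(\depth)$ and adding the exact $2^{\depth+1}-1$ term to each part yields
\[
  e^{\targetbound_\depth/\cbd - 1} + 2^{\depth+1} - 1
  \;\le\;
  \ncutsvbwc(\depth) + 2^{\depth+1} - 1
  \;<\;
  e^{\targetbound_\depth/\cbd} + 2^{\depth+1} - 2,
\]
which are exactly $\lbFun(\depth)$ and $\ubFun(\depth)$. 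The only point requiring a moment of care --- not really an obstacle --- is the alignment of the case boundaries with \Cref{lem:harmIneq}(2): one must confirm that the floor in $\hat{\depth}^*$ yields $\targetbound_\depth \ge \cbd$, with the equality case $\targetbound_\depth = \cbd$ falling into the $(0,\cbd]$ branch, so that the estimates remain mutually consistent at the boundary. Everything else is direct substitution, so I anticipate no genuine difficulty.
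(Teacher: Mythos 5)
Your proposal is correct and follows essentially the same route as the paper: the size identity $\ncutsvbwc(\depth) + 2^{\depth+1} - 1$ combined with the three regimes of $H^{-1}$ in \Cref{lem:harmIneq}(2). The extra verification that $\depth \le \hat{\depth}^*$ forces $\targetbound_\depth \ge \cbd$ is a useful detail the paper leaves implicit, but it does not change the argument.
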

\begin{proof}
    Using $\ncutsvbwc(\depth) = H^{-1}(\targetbound_\depth/\cbd)$,
    we apply \Cref{lem:harmIneq} to the size of a tree of branching component depth $\depth$.
    Specifically,
    $\ncutsvbwc(\depth) = 0$ if $\targetbound_\depth \le 0$,
    $\ncutsvbwc(\depth) = 1$ if $\targetbound_\depth \in (0,\cbd]$,
    and otherwise
        $
            e^{\targetbound_\depth / \cbd - 1} 
            \le \ncutsvbwc(\depth) 
            < e^{\targetbound_\depth / \cbd} - 1.
        $
\end{proof}

Next, \cref{lem:lbMin,lem:ubMin} identify the minimizers of the lower and upper bounding functions identified in \cref{lem:TreeSizeBound}, with no integrality restrictions on the depth $\depth$. We will then argue in \cref{lem:lbubMinDiff} that since this is a one-dimensional convex minimization problem, the optimum after imposing integrality restrictions on $\depth$ is a rounding of the continuous optimum.

\begin{lemma} \label{lem:lbMin}
The unique (continuous) minimum of $\lbFun(\depth)$ defined in \cref{lem:TreeSizeBound} occurs at
    \[ 
       \lbFunOptCont \defeq 
        \frac{
            \targetbound
            + \cbd \left(
                \ln (\frac{\rbd}{\cbd \ln 4}) - 1
            \right)
            }{
            \rbd + \cbd \ln 2
            }.
    \]
\end{lemma}
\begin{proof}
    The function $\lbFun(\depth)$ is a sum of two strictly convex differentiable functions. 
    Thus, $\lbFun(\depth)$ is also a strictly convex differentiable function.
    The derivative with respect to $\depth$ is
    \begin{align*}
        \lbFun'(\depth) 
            =
            -\frac{\rbd}{\cbd} e^{\frac{\targetbound - \depth \rbd}{\cbd}-1}
            + 2^{\depth+1} \ln 2.
    \end{align*}
    Setting the above to zero, the unique minimum of $\lbFun(\depth)$ is at $\lbFunOptCont$.
\end{proof}

\begin{lemma} \label{lem:ubMin}
The unique (continuous) minimum of $\ubFun(\depth)$ defined in \cref{lem:TreeSizeBound} occurs at
    \[
       \ubFunOptCont \defeq 
       \frac{
            \targetbound
            + \cbd \ln (\frac{\rbd}{\cbd \ln 4})
        }{
            \rbd + \cbd \ln 2
        }.
    \]
\end{lemma} 
\begin{proof}
    The function $\ubFun(\depth)$ is a sum of two strictly convex differentiable functions. 
    Thus, $\ubFun(\depth)$ is also a strictly convex differentiable function.
    The derivative with respect to $\depth$ is
    \begin{align*}
        \ubFun'(\depth) 
            =
            -\frac{\rbd}{\cbd} e^{\frac{\targetbound - \depth \rbd}{\cbd}}
            + 2^{\depth+1} \ln 2.
    \end{align*}
    Setting the above to zero, the unique minimum of $\ubFun(\depth)$ is at $\ubFunOptCont$.
\end{proof}

Having found the continuous minima of $\lbFun$ and $\ubFun$, now we prove that the integer minimizers of $\lbFun$ and $\ubFun$ cannot be too far away from each other.

\begin{lemma} \label{lem:lbubMinDiff}
    Let $\lbFunOpt$ and $\ubFunOpt$ 
    be minimizers of $\lbFun(\depth)$ and $\ubFun(\depth)$, as defined in \cref{lem:TreeSizeBound}, over the set of nonnegative integers.
    Then, $-1 \le \ubFunOpt - \lbFunOpt \le 2$. 
\end{lemma}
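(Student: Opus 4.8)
The plan is to reduce both minimizations to locating where a single increasing ratio crosses a threshold, and then to control how far apart the two crossing points can be using the multiplicative growth rate of that ratio. A purely ``rounding'' approach will not suffice for the sharp upper bound: a direct computation gives $\ubFunOptCont - \lbFunOptCont = \cbd/(\rbd + \cbd \ln 2) \in (0, 1/\ln 2)$, but since $1/\ln 2 \approx 1.443 > 1$, two \emph{independent} roundings of continuous minimizers that differ by up to $1.443$ could a priori differ by as much as $3$. To recover the claimed bound $2$, I would instead exploit the exact coupling between $\lbFun$ and $\ubFun$: their $h$-parts are identical and their exponential parts differ by precisely the factor $e$.

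First I would write $\lbFun(\depth) = g(\depth) + h(\depth) - 1$ and $\ubFun(\depth) = e\,g(\depth) + h(\depth) - 2$, where $g(\depth) \defeq e^{(\targetbound - \depth\rbd)/\cbd - 1}$ and $h(\depth) \defeq 2^{\depth+1}$. A one-line computation of the forward difference gives $\lbFun(\depth+1) - \lbFun(\depth) = g(\depth)(e^{-\rbd/\cbd} - 1) + h(\depth)$, which is nonnegative exactly when $\rho(\depth) \defeq h(\depth)/g(\depth) \ge \alpha$, where $\alpha \defeq 1 - e^{-\rbd/\cbd} > 0$; the analogous computation for $\ubFun$ yields the threshold $e\alpha$. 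Since $g$ is decreasing and $h$ increasing, $\rho$ is strictly increasing, with the clean multiplicative step $\rho(\depth+1)/\rho(\depth) = 2\,e^{\rbd/\cbd}$. Any integer minimizer $\lbFunOpt$ satisfies $\lbFun(\lbFunOpt) \le \lbFun(\lbFunOpt + 1)$ (as $\lbFunOpt+1$ is in the domain), hence $\rho(\lbFunOpt) \ge \alpha$; likewise $\rho(\ubFunOpt) \ge e\alpha$, and when $\lbFunOpt \ge 1$ minimality also forces $\rho(\lbFunOpt - 1) \le \alpha$.

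For the lower bound, suppose $\ubFunOpt < \lbFunOpt$; then $\lbFunOpt \ge 1$, so $\rho(\ubFunOpt) \le \rho(\lbFunOpt - 1) \le \alpha < e\alpha$, contradicting $\rho(\ubFunOpt) \ge e\alpha$. Hence $\ubFunOpt \ge \lbFunOpt$, which is in fact stronger than the required $\ubFunOpt - \lbFunOpt \ge -1$. For the upper bound, I would start from $\rho(\lbFunOpt) \ge \alpha$ and apply the multiplicative step twice: $\rho(\lbFunOpt + 2) = \rho(\lbFunOpt)\,(2 e^{\rbd/\cbd})^2 \ge 4\alpha\, e^{2\rbd/\cbd} \ge 4\alpha > e\alpha$, using $\rbd/\cbd > 0$ and $4 > e$. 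Since $\rho$ is increasing, $\rho(\depth) > e\alpha$ for every $\depth \ge \lbFunOpt + 2$, so $\ubFun$ is strictly increasing on $\{\lbFunOpt + 2, \lbFunOpt + 3, \dots\}$ and therefore has no minimizer beyond $\lbFunOpt + 2$; that is, $\ubFunOpt \le \lbFunOpt + 2$, giving $\ubFunOpt - \lbFunOpt \le 2$.

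The main obstacle is precisely the point flagged above: the sharpness of the upper bound cannot come from rounding the continuous minimizers alone, so the argument must use the structural relationship between $\lbFun$ and $\ubFun$ (identical power-of-two part, $e$-scaled exponential part), which turns the two minimizations into threshold crossings of the \emph{same} monotone ratio $\rho$ at $\alpha$ and $e\alpha$. The only genuine computations are the two inequalities $2 e^{\rbd/\cbd} > 1$ (monotonicity of $\rho$) and $(2 e^{\rbd/\cbd})^2 > e$ (the ``two steps suffice'' fact, which holds with room to spare since already $4 > e$). A small technical point I would state carefully is that $\rho(\lbFunOpt) \ge \alpha$ holds for \emph{any} integer minimizer regardless of whether the constraint $\depth \ge 0$ is active, since it follows merely from comparing $\lbFun(\lbFunOpt)$ with $\lbFun(\lbFunOpt + 1)$; this lets the same proof cover the boundary case where the continuous minimizer is negative.
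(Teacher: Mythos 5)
Your proof is correct, and it takes a genuinely different route from the paper's. The paper argues through the continuous minimizers: it invokes convexity to say that each integer minimizer is a rounding of the corresponding continuous one from \Cref{lem:lbMin} and \Cref{lem:ubMin}, computes $\epsilon = \ubFunOptCont - \lbFunOptCont = 1/(\rbd/\cbd + \ln 2) \in (0,1.5)$, and then bounds $\lceil \lbFunOptCont + \epsilon \rceil - \lfloor \lbFunOptCont \rfloor$. You never touch the continuous minimizers: you exploit the exact coupling $\lbFun = g + h - 1$, $\ubFun = e\,g + h - 2$, turn both minimizations into threshold crossings of the same strictly increasing ratio $\rho = h/g$ at levels $\alpha$ and $e\alpha$, and use the multiplicative step $\rho(\depth+1)/\rho(\depth) = 2e^{\rbd/\cbd}$ together with $4 > e$ to show that two steps suffice. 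Your worry about the rounding route is well founded and is in fact a defect in the paper's own argument: for $\epsilon \in (1,1.5)$ and a continuous minimizer with fractional part close to $1$, one has $\lceil \lbFunOptCont + \epsilon \rceil - \lfloor \lbFunOptCont \rfloor = 3$, so the paper's step ``$\le \lceil \epsilon \rceil$'' is not valid in general and that route only yields the weaker bound $3$. Your argument closes this gap, and moreover gives the slightly stronger conclusion $0 \le \ubFunOpt - \lbFunOpt \le 2$. The only hypotheses you use beyond the paper's are $\rbd > 0$ and $\cbd > 0$ (so that $\alpha > 0$ and $\rho$ is strictly increasing), which are implicit throughout the section; it would be worth stating them explicitly.
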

\begin{proof}
    The integer minimizer of a one-dimensional convex function is either the floor or ceiling of the corresponding continuous minimizer.
    Hence, 
        $
        \lbFunOpt \in \{\lfloor \lbFunOptCont \rfloor, \lceil \lbFunOptCont \rceil\}
        $
    and
        $
        \ubFunOpt \in \{\lfloor \ubFunOptCont \rfloor, \lceil \ubFunOptCont \rceil\}.
        $
    Let $\epsilon \defeq \ubFunOptCont - \lbFunOptCont$.
    Using $\ln 2 > 2/3$, we bound
        \[
            \epsilon
            = \frac{1}{\rbd/\cbd + \ln 2}
            \in (0, 1.5).
        \]
    We then have that
        \[
            \ubFunOpt - \lbFunOpt
            \le
            \lceil{\ubFunOptCont}\rceil
            - \lfloor{\lbFunOptCont}\rfloor
            =
            \lceil{\lbFunOptCont+\epsilon}\rceil
            - \lfloor{\lbFunOptCont}\rfloor
            \le
            \lceil \epsilon \rceil
            \le 2.
        \]
    Similarly,
         \[
            \ubFunOpt - \lbFunOpt
            \ge
            \lfloor{\ubFunOptCont}\rfloor
            - \lceil{\lbFunOptCont}\rceil
            =
            \lfloor{\lbFunOptCont+\epsilon}\rfloor
            - \lceil{\lbFunOptCont}\rceil
            \ge
            \lfloor{\lbFunOptCont}\rfloor
            - \lceil{\lbFunOptCont}\rceil
            \ge -1.
            \qedhere
        \]
\end{proof}

From \cref{lem:lbubMinDiff}, there is a possibility for $\lbFunOpt$ to be equal to, less than, or greater than $\ubFunOpt$, leading to prescribing different numbers of cut nodes from both bounds.
To complete the proof, we show that the tree size when using the number of cuts prescribed by the lower bound is not too different from the tree size when using cuts as prescribed by the upper bound.
This, in turn, implies the desired approximation with respect to the minimal tree size, when combined with checking the additional possible branching component depth at which the approximations in \Cref{lem:TreeSizeBound} do not apply.

\subsection{Proof of \cref{thm:ApxMAIN}}
\begin{proof}[Proof of \cref{thm:ApxMAIN}]
    Let $T_\depth$ denote the $\svbwc[\lrc]$ tree that proves bound $\targetbound$ with branching component having depth $\depth$ and $\ncutsvbwc(\depth)$ cut nodes at the root.
    Let $\ubFunOptCont$ be the continuous minimizer of the upper-bounding function on tree size $\ubFun(\depth)$ from \Cref{lem:ubMin},
    and let $\ubFunOpt \in \{ \lfloor \ubFunOptCont \rfloor, \lceil \ubFunOptCont \rceil \}$ be the integer minimizer, as defined in \cref{lem:lbubMinDiff}.
    From \Cref{lem:TreeSizeBound}, the bounds on $\lbFun$ and $\ubFun$ only apply for 
    	$
			\depth 
			\le 
			\hat{\depth}^* \defeq \floor{(\targetbound - \cbd) / \rbd}.
		$
    At the same time, from \Cref{lem:minPossibility}, the maximum possible optimal branching depth is 
        $
            \ceil{\targetbound / \rbd} - \floor{\cbd/\rbd} 
            \le 
            \hat{\depth}^* + 1$. 
    In \Cref{alg:SVBWC}, we explicitly check $\treesize(T_{\hat{\depth}^*+1})$,
    and for the remaining possibilities, we will prove that it suffices to check $\treetime(T_{\ubFunOpt})$ to get the desired approximation of the size of a minimal \SVBWC tree $T^\star$ that proves bound $\targetbound$.
    
    Note that if $\ubFunOpt > \hat{\depth}^*$,
    then the bounds on $\ubFun$ and $\lbFun$ do not apply, but for this case, we do not need to rely on the below approximation, as we are explicitly checking the tree size for depth $\hat{\depth}^* + 1$,
    the only other possible branching depth in a minimal tree.
    Thus, for the ensuing discussion,
    assume that $\ubFunOpt \le \hat{\depth}^*$.

    Let $\lbFunOptCont$ be the continuous minimizer of $\lbFun(\depth)$ from \Cref{lem:lbMin},
     and let $\lbFunOpt$ be the integer minimizer from \Cref{lem:lbubMinDiff}.
    Let $\depth^\star$ be the branching component depth of $T^\star$.
    Then
        \[
            \lbFun(\lbFunOpt)
            \le
            \lbFun(\depth^\star)
            \le
            \treetime(T^\star)
            \le
            \treetime(T_{\ubFunOpt})
            \le
            \ubFun(\ubFunOpt).
        \]
    We have that $\lbFun(\lbFunOpt) \le \lbFun(\depth^\star)$
    because $\lbFunOpt$ is an integer minimizer of $\lbFun$.
    The second inequality holds because $\lbFun(\depth)$ is a lower bound on the size of a tree with branching depth $\depth$.
    The next inequality follows from the minimality of $T^\star$.
    Finally, $\treetime(T_{\ubFunOpt}) \le \ubFun(\ubFunOpt)$ as $\ubFun(\depth)$ upper bounds the size of a tree having branching depth $\depth$.
    Thus, we have that
        \[
            1 
            \le 
            \frac{\treetime(T_{\ubFunOpt})}{\treetime(T^\star)}
            \le
            \frac{\ubFun(\ubFunOpt)}{\lbFun(\lbFunOpt)}.
        \]
    The goal is to bound
        $
        {\treetime(T_{\ubFunOpt})} / {\treetime(T^\star)},
        $
    which we pursue by first bounding
        \begin{equation}
        \label{eq:ApxBnd}
            \frac{\ubFun(\ubFunOpt)}{\lbFun(\lbFunOpt)}
            =
            \frac{e^{(\targetbound - \ubFunOpt \rbd) / \cbd} + 2^{\ubFunOpt + 1} - 2}{e^{(\targetbound - \lbFunOpt \rbd) / \cbd - 1} + 2^{\lbFunOpt + 1} - 1}.
        \end{equation}
    
We bound this ratio for three cases based on the values of $\lbFunOpt$ and $\ubFunOpt$. 
 \begin{enumerate}[label={\textit{Case \arabic*.}},ref={Case~\arabic*},leftmargin=*,topsep=2pt, partopsep=2pt]
 \item $\lbFunOpt > \ubFunOpt$. 
    In this case, $\ncutsvbwc(\ubFunOpt) > \ncutsvbwc(\lbFunOpt)$,
    i.e., our tree $T_{\ubFunOpt}$ has more cuts than $T_{\lbFunOpt}$.
    We upper bound the number of ``extra'' cuts this might result in.
    It holds that
        $
            2^{\ubFunOpt + 1}
            \le 2^{\lbFunOpt}
            < 2^{\lbFunOpt + 1} +1.
        $
    Thus, using \cref{eq:ApxBnd} and that
    the exponent of $e$ in the numerator is larger than in the denominator,
        \begin{equation*}
            \frac{\ubFun(\ubFunOpt)}{\lbFun(\lbFunOpt)}
            \le
            \frac{e^{(\targetbound - \ubFunOpt \rbd) / \cbd} + 2^{\lbFunOpt + 1} - 1}{e^{(\targetbound - \lbFunOpt \rbd) / \cbd - 1} + 2^{\lbFunOpt + 1} - 1}
            \le 
            \frac{e^{(\targetbound - \ubFunOpt \rbd) / \cbd} }{e^{(\targetbound - \lbFunOpt \rbd) / \cbd - 1}}
            = 
            e^{1 + \frac{\rbd(\lbFunOpt-\ubFunOpt)}{\cbd}}
            \le 
            e^{1 + \frac{\rbd}{\cbd}},
        \end{equation*}
        where the final relation follows from $\lbFunOpt - \ubFunOpt \le 1$ from \cref{lem:lbubMinDiff}.
  
    \item $\lbFunOpt = \ubFunOpt$.
        \begin{equation*}
            \frac{\ubFun(\ubFunOpt)}{\lbFun(\ubFunOpt)}
            \le 
            \frac{e^{(\targetbound - \ubFunOpt \rbd) / \cbd} - 1}{e^{(\targetbound - \ubFunOpt \rbd) / \cbd - 1}}
            < e. 
        \end{equation*}
  
    \item %
        $\lbFunOpt < \ubFunOpt$.
        In this case, $\ncutsvbwc(\ubFunOpt) < \ncutsvbwc(\lbFunOpt)$.
        The tree we are evaluating, $T_{\ubFunOpt}$,
        has fewer cuts than as suggested by the lower bounding function. 
        We have to ensure that cutting less has not made the branching part of the tree too large.
        Hence, applying $\ubFunOpt-\lbFunOpt \le 2$
        from \cref{lem:lbubMinDiff},
        and $2^{\lbFunOpt+1} \ge 2$,
        \begin{equation*}
            \frac{\ubFun(\ubFunOpt)}{\lbFun(\ubFunOpt)}
            \le  
              \frac
              {2^{\ubFunOpt+1}-1}
              {2^{\lbFunOpt+1}-1}
            =
             \frac
             {2^{\ubFunOpt-\lbFunOpt}-1/2^{\lbFunOpt+1}}
              {1-1/2^{\lbFunOpt+1}}
            \le
             \frac
             {2^2-1/2^{\lbFunOpt+1}}
             {1/2}
            \le
              8 - 2^{-\lbFunOpt}
           \le 8.
        \end{equation*}
    \end{enumerate}
    Combining the bounds above gives the result.
\end{proof}

\subsection{Cuts Prove a Constant Portion of the Bound}
\cref{thm:ApxMAIN} proves an approximation to the optimal number of cut nodes in an optimal $\SVBWC$ tree.
Next, as a complement, \cref{thm:apxGrowth} states that, in the limit, the number of cut nodes prescribed by \cref{alg:SVBWC} proves a \emph{constant} fraction of the target bound relative to the portion proved by branching.

\begin{theorem}\label{thm:apxGrowth}
Let $\lbd=\rbd$ and $\cbd > 0$ be fixed.
For a given target bound, consider a $\svbwc[\lrc]$ cut-and-branch tree
where the number of nodes is calculated via \cref{alg:SVBWC}.
As the target bound $\targetbound$ goes to infinity,
the fraction of the bound proved by the cut nodes converges to the constant
${\cbd \ln 2}/({\rbd + \cbd \ln 2})$. 
\end{theorem}
\begin{proof}
    \Cref{alg:SVBWC} evaluates three different branching depths to determine the number of cut nodes that approximately minimize overall tree size.
    
    First, consider a tree with branching depth $\depth_3 = \ceil{\targetbound / \rbd} - \floor{\cbd/\rbd}$ from \cref{step:depth3_largest-possible} of the algorithm,
    which is an upper bound on the depth when $k$ cuts are added, where $k$ is the maximum integer such that $\cbd / k \ge \rbd$.
    As $\targetbound$ increases, nearly all of the bound is proved by branching in this case, with the bottom layer of the tree containing $2^{\depth_3}$ leaf nodes.
    There exists a $k'$ such that adding $k' - k$ more cut nodes will satisfy $\cbd/(k+1)+\cdots+\cbd/k' \ge \rbd$, where $k'$ is independent of $\targetbound$.
    Hence, for sufficiently large $\targetbound$, $\depth_3$ will not be the minimizer selected in \cref{step:return-ncuts}.

    Next, recall from \Cref{lem:ubMin} that the continuous minimizer of $\ubFun$,
    which provides an upper bound on the total size of the tree as a function of the depth of the branching component,
    is $\ubFunOptCont$,
    and the other two possible outputs are
    $\depth_1 = \lfloor{\ubFunOptCont}\rfloor$ and 
    $\depth_2 = \lceil{\ubFunOptCont}\rceil$
    from
    \cref{step:depth12_round-continuous-minimizer}.

    Since $\delta_1$ and $\delta_2$ are at most one unit away from $\ubFunOptCont$,
    the amount of bound proved by branching is in the range
        $[(\ubFunOptCont-1)\rbd, (\ubFunOptCont+1)\rbd]$.
    Substituting in the value of $\ubFunOptCont$ and dividing by $\targetbound$,
        \[
            \frac{\ubFunOptCont \rbd}{\targetbound}
            \pm \frac{\rbd}{\targetbound}
            =
            \frac{
                \targetbound \rbd
                + \rbd \cbd \ln (\frac{\rbd}{\cbd \ln 4})
            }{
                \targetbound (\rbd + \cbd \ln 2)
            }
            \pm \frac{\rbd}{\targetbound},
        \]
    both the upper and lower bound of the fraction of bound proved by branching nodes 
    tends to 
        ${\rbd}/({\rbd + \cbd \ln 2})$
    as $\targetbound$ goes to infinity, 
    implying that the fraction of bound proved by branching nodes also tends to the same value.
    This further implies that as $\targetbound \to \infty$,
    the fraction of bound proved by cut nodes is 
        \begin{equation*}
            1 - \frac{\rbd}{\rbd + \cbd \ln 2}
            =
            \frac{\cbd \ln 2}{\rbd + \cbd\ln 2}. \qedhere
        \end{equation*}
\end{proof}

\cref{thm:apxGrowth} provides an indication of the tradeoff between cutting and branching in the harmonically-worsening cuts model, and it applies, for example, to an increasingly difficult family of instances (quantified by an increasing target bound), for a fixed relative strength of cutting and branching.
For example, if the first cut is a factor of $1/\ln 2 \approx 1.44$ stronger than branching, then around half of the bound is proved by cutting, in the limit.
More generally, if $\lambda > 0$ such that $\cbd = \lambda \rbd / \ln 2$,
then approximately
    $
        \lambda / (1+\lambda)
    $
proportion of the target bound is proved by cut nodes as $\targetbound \to \infty$.

\cref{thm:ApxMAIN,thm:apxGrowth} hinge on bounds on harmonic numbers and the function $H^{-1}$.
Improving these bounds can lead to an improvement in the approximation factor, or even an exact algorithm, for the optimal number of cut nodes, and hence of the optimal tree size.
For example, Hickerson~\cite[\href{https://oeis.org/A002387}{A002387}]{oeis} conjectures that, if $n \in \nonnegints$, $H^{-1}(n) = \floor{e^{n-\gamma} + 1/2}$ for $n \ge 2$, where $\gamma$ denotes the Euler-Mascheroni constant, approximately $0.577$.%
\footnote{See references and notes in \url{https://oeis.org/A002387} and \url{https://oeis.org/A004080}.}

\section{Optimizing Tree Time}
\label{sec:general-cut-time}

We now return to the \SVBC setting in which cuts have constant quality.
Whereas the previous sections focus on decreasing the \emph{size} of a \branchandcutx{} tree, in practice the quantity of interest is the \emph{time} it takes to solve an instance.
The two notions do not intersect: it can be that one tree is smaller than another, but because the relaxations at each node solve more slowly in the smaller tree, the smaller tree ultimately solves in more time than the larger one.
This plays prominently into cut selection criteria, as strong cuts can be dense, and adding such cuts to the relaxation slows down the solver.

\subsection{Time-Functions Bounded by a Polynomial}
\label{sec:cut-time-bounded-by-polynomial}

We first show that if the \timefunction{} is bounded above by a polynomial, then for sufficiently large $\targetbound$, it is optimal to use at least one cut node.
\cref{fig:two-instances} provides empirical motivation for this assumption.
The secondary vertical axis is the time (in seconds) to resolve the linear relaxation over (up to) 100 rounds of Gomory cuts.
It can be seen from these plots that, approximately, the time grows \emph{linearly} with the number of added cuts.
Our experiments with additional instances, reported in \Cref{appendix:more-instances}, support the linearity observation, or even a sublinear increase in time, as the number of cuts added in later rounds tends to be smaller compared to the initial rounds.

\begin{theorem}\label{thm:atleastoneHC}
Suppose we have an $\svbc$ tree $T$ and the values of $\HCf$ are bounded above by a polynomial. Then, there exists $\bar{\targetbound} > 0$ such that every $\treeweight$-minimal \SVBC tree proves a bound of $\targetbound >\bar{\targetbound}$ has at least one cut node. 
\end{theorem}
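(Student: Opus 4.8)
The plan is to exhibit, for every sufficiently large target bound, a single cut-and-branch tree whose time strictly beats that of \emph{every} pure branching tree. Since a $\treeweight$-minimal tree is at least as fast as this explicit witness, it too must beat all pure branching trees, hence cannot be one, and therefore must contain a cut node. I first dispatch the degenerate case: if $\lbd=0$, no finite pure branching tree can prove any bound $\targetbound>0$ (the all-left path never improves its bound), so every finite proving tree already has a cut node and we may take $\bar\targetbound=0$. Thus I assume $0<\lbd\le\rbd$ throughout (and $\cbd>0$, so that cuts can prove bound at all).

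Next I would pin down the cost of pure branching. Let $L(\targetbound)$ be the minimum number of leaves of a pure branching tree proving $\targetbound$. By optimal substructure it satisfies $L(\targetbound)=L(\targetbound-\lbd)+L(\targetbound-\rbd)$ for $\targetbound>0$ and $L(\targetbound)=1$ for $\targetbound\le 0$; it is nondecreasing, and since $\lbd\le\rbd$ gives $L(\targetbound-\lbd)\ge L(\targetbound-\rbd)$, we get $L(\targetbound)\ge 2L(\targetbound-\rbd)$. Iterating (valid for $\targetbound$ large relative to the fixed quantities below) yields $L(\targetbound-j\rbd)\le 2^{-j}L(\targetbound)$ and $L(\targetbound)\ge 2^{\ceil{\targetbound/\rbd}}\to\infty$. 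Because a pure branching tree has no cut nodes, every node has time $\HCf(0)=1$, so its time equals its size; as a full binary tree with $L(\targetbound)$ leaves has $2L(\targetbound)-1$ nodes, the minimum achievable pure-branching time is exactly $2L(\targetbound)-1$.

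For the witness, set $q\defeq\ceil{\rbd/\cbd}$, so $q$ cut rounds prove at least $\rbd$, and for an integer $j\ge 1$ let $T'_j$ consist of $jq$ cut nodes at the root followed by a minimal pure branching tree proving the residual bound $\targetbound-jq\cbd\le\targetbound-j\rbd$. Every node of the branching component lies below $jq$ cuts, hence has time $\HCf(jq)$, while the cut path contributes $\sum_{i=0}^{jq-1}\HCf(i)\le jq\,\HCf(jq)$. Combining this with the size estimate $2L(\targetbound-jq\cbd)-1\le 2L(\targetbound-j\rbd)-1\le 2^{1-j}L(\targetbound)$ for the branching part gives $\treeweight(T'_j)\le\bigl(jq+2^{1-j}L(\targetbound)\bigr)\HCf(jq)$.

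The main obstacle is exactly why a naive one-layer argument fails: removing a single branching layer costs a slowdown factor $\HCf(q)$ that may exceed $2$, cancelling the factor-$2$ shrinkage of the branching tree. The resolution is to remove \emph{many} layers at once and to use the polynomial bound on $\HCf$. Removing $j$ layers shrinks the branching part by $2^{-j}$ (an exponential gain in $j$) while inflating node time only by $\HCf(jq)\le C(1+jq)^d$ (a polynomial in $j$). I would therefore fix $j^\star$ large enough that $\HCf(j^\star q)<2^{j^\star}$, which exists since $2^j$ eventually dominates any polynomial in $j$. Dividing the two time estimates by $L(\targetbound)$ and letting $\targetbound\to\infty$, the witness ratio tends to $2^{1-j^\star}\HCf(j^\star q)<2$, whereas the pure-branching ratio $ (2L(\targetbound)-1)/L(\targetbound)$ tends to $2$. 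Hence there is a threshold $\bar\targetbound$ beyond which $\treeweight(T'_{j^\star})<2L(\targetbound)-1$, so the $\treeweight$-minimal tree beats all pure branching trees and must carry a cut node. The only quantitative care needed is that the residual $\targetbound-j^\star q\cbd$ stay positive and that the chain $L(\targetbound-j\rbd)\le 2^{-j}L(\targetbound)$ remain valid, both guaranteed once $\targetbound$ exceeds the fixed constant $\max\{j^\star q\cbd,\,j^\star\rbd\}$.
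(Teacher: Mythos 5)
Your proof is correct, and it reaches the conclusion via a different witness than the paper's own proof. The paper argues more bluntly: it compares the \emph{pure cutting} tree, whose time $\sum_{i=0}^{k-1}\HCf(i)$ with $k=\ceil{\targetbound/\cutbd}+1$ is polynomial in $\targetbound$ under the hypothesis, against the exponential lower bound $2^{\ceil{\targetbound/\rbd}+1}-1$ on the time of every pure branching tree; for large $\targetbound$ the former is smaller, so a $\treeweight$-minimal tree cannot be pure branching. Your witness is instead a cut-and-branch tree with a \emph{constant} number $j^\star q$ of cut rounds, chosen so that the $2^{-j^\star}$ shrinkage of the branching component outweighs the slowdown factor $\HCf(j^\star q)$. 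This costs you the bookkeeping with $L(\targetbound)$ and the limiting argument, but it buys two things the paper's proof does not give: (i) the polynomial hypothesis is used only to extract a single finite index $j^\star$ with $\HCf(j^\star q)<2^{j^\star}$, so your argument actually establishes the theorem under the weaker assumption that $\HCf(jq)<2^{j}$ holds for \emph{some} $j$ (any \timefunction{} growing subexponentially with base below $2^{1/q}$ qualifies), whereas the paper needs to control $\HCf$ at arguments up to $\Theta(\targetbound)$; and (ii) a potentially smaller threshold $\bar{\targetbound}$, since you only need $L(\targetbound)$ to exceed a fixed constant depending on $j^\star$, $q$, and $\HCf(j^\star q)$, rather than needing $2^{\targetbound/\rbd}$ to dominate a polynomial in $\targetbound/\cbd$. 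Your handling of the degenerate case $\lbd=0$ and of the validity range of the chain $L(\targetbound-j\rbd)\le 2^{-j}L(\targetbound)$ is careful and correct, and the claim that a pure branching tree's time equals its size (since every node there has time $\HCf(0)=1$) matches the paper's accounting.
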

\begin{proof}
Let $\HCf(z) \le 1 + \alpha z^d$ for some $\alpha, d >0$ be the polynomial upper bound for each $z\in \nonnegints$.
A pure branching tree $T_B$ proving a bound $\targetbound$ has at least $2^{ \ceil{{\targetbound}/{\rbd}} +1 }-1$ nodes. The same lower bound holds for $\treeweight(T_B)$.

Now consider a pure cutting tree $T_C$ proving bound $\targetbound$. Such a tree has exactly $k = \ceil{{\targetbound}/{\cutbd}}+1$ nodes.
The tree time for $T_C$ is $\treeweight(T_C) = \sum_{i=0}^{k-1} \HCf(z) \le k + \alpha \sum_{i=1}^{k-1} z^d  \leq k + \alpha(k-1)^{d+1} < p(k)$, where $p$ is some polynomial. %
For sufficiently large values of $\targetbound$, $2^{ \ceil{{\targetbound}/{\rbd}}+1}-1 >   p \left( \ceil{\frac{\targetbound}{\cutbd}}\right) $ for any polynomial $p$, 
implying that a $\treeweight$-minimal tree has at least one cut node. 
\end{proof}

\Cref{thm:atleastoneHC} implies that when cuts affect the time of a tree in a consistent way (through a fixed \timefunction{}) for a family of instances, then cuts are beneficial for a sufficiently hard instance.
A complementary result also holds: if we are considering different cut approaches for a given instance that increasingly slow down node time, then eventually pure branching is optimal.
Specifically,
for any $\lbd$, $\rbd$, $\cutbd$, and $\targetbound$, there exists a \emph{linear} \timefunction{} such that the corresponding $\treeweight$-minimal tree has {\em no} cuts. 
For example, let a pure branching tree of size $\bar{s}$  prove a bound of $\targetbound$.
Then, choosing $\HCf(z) \defeq \bar{s}z+1$ ensures that the pure branching tree is $\treeweight$-minimal.
This is because the tree time of the pure branching tree is $\bar{s}$ while a \bc tree with at least one cut will have a tree time of $\bar{s}+1$.

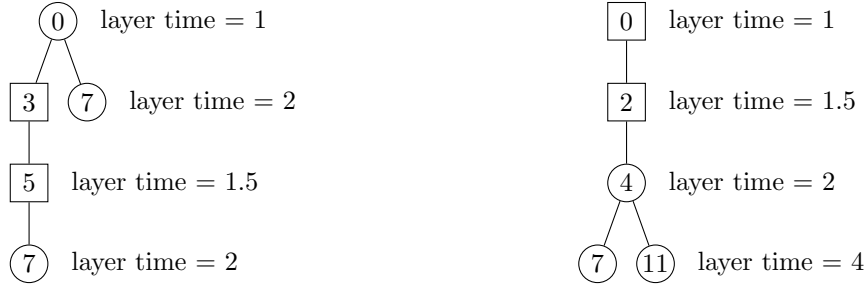
\begin{figure}[t]
    \centering
    \captionsetup[subfigure]{justification=centering}
    \begin{subfigure}{0.45\textwidth}
    \centering
    \begin{forest}
        for tree = {big node}
        [0,lvlwt={1}
            [ 3,big cut [5,big cut,lvlwt={1.5} [7,lvlwt={2}]],
            ],
            [7, lvlwt={2}]
        ]
    \end{forest}
    \end{subfigure}
    \begin{subfigure}{0.45\textwidth}
    \centering
    \begin{forest}
        for tree = {big node}
        [0,big cut,lvlwt={1}
            [ 2,big cut,lvlwt={1.5} [4,lvlwt={2} [7], [11,lvlwt={4}]]
            ]
        ]
    \end{forest}
    \end{subfigure}
    \caption{Consider the \SVBC tree that must prove a bound $\targetbound=7$, with parameters $\lbd = 3$, $\rbd = 7$, $\cutbd = 2$, and $\HCf(z) = z/2 + 1$.
    The \heavycutweight of the first tree is $6.5$
    and of the second tree is $8.5$.
    Thus, cutting at the root node is strictly inferior to cutting at the leaf.
    One can also check that the pure branching tree has a \heavycutweight of $7$ and the pure cutting tree has a \heavycutweight of $10$, showing that the unique $\treeweight$-minimal \bc tree is the tree in the left panel.}
    \label{fig:HeavyCutNoFront}
\end{figure}

Next, we observe that an analogue of \cref{lem:cutFirst} does \emph{not} hold for $\treeweight$-minimality. \Cref{fig:HeavyCutNoFront} provides an example where the unique $\treeweight$-minimal \bc tree has no cuts at the root.
Despite that, for the special case where $\lbd=\rbd$,
we prove in \Cref{thm:root-cuts-suffice} that there is a $\treeweight$-minimal tree having only root cuts.

\begin{theorem}
\label{thm:root-cuts-suffice}
    If $\lbd = \rbd$, then, for any \timefunction{} and target bound $\targetbound$,
    there exists a $\treeweight$-minimal tree with only root cuts.
\end{theorem}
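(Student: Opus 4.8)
The plan is to exploit the symmetry $\lbd=\rbd$ to collapse the problem to a one–dimensional dynamic program and then to isolate a single monotonicity statement about its optimal policy. Because $\lbd=\rbd$, the bound at any node depends only on its numbers of branch- and cut-ancestors, and the two children of any branch node face \emph{identical} subproblems; so in a $\treetime$-minimal tree the two subtrees of every branch node may be taken identical and each individually optimal. This lets me encode an optimal subtree by its state $(y,k)$, where $y$ is the bound still to be proved and $k$ is the cut-depth of its root, with optimal tree time
\[
  m(y,k)=\HCf(k)+\begin{cases}0 & y\le 0,\\ \min\{\,m(y-\cutbd,k+1),\ 2\,m(y-\rbd,k)\,\} & y>0.\end{cases}
\]
In this language, ``all cut nodes form a root path'' means the optimal policy never cuts after it has branched. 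Writing $R(y,k)$ for the best value attainable by first applying some number $j$ of cuts and then pure branching, namely $R(y,k)=\min_j\big[\sum_{i<j}\HCf(k+i)+(2^{\,\max\{0,\lceil (y-j\cutbd)/\rbd\rceil\}+1}-1)\HCf(k+j)\big]$, the theorem is exactly the claim $m(\targetbound,0)=R(\targetbound,0)$. I would in fact prove $m(y,k)=R(y,k)$ for \emph{all} states, by induction on the number of operations needed to drive $y$ down to $0$ (each operation lowers $y$ by $\cutbd>0$ or $\rbd>0$, so this is well founded).

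The inductive step splits on the optimal first move, and $m(y,k)\le R(y,k)$ is free since a root-cut tree is one feasible strategy. If cutting is weakly optimal at $(y,k)$, then $m(y,k)=\HCf(k)+m(y-\cutbd,k+1)=\HCf(k)+R(y-\cutbd,k+1)\ge R(y,k)$ by the induction hypothesis and the definition of $R$, giving $m=R$. The only dangerous case is when branching is strictly optimal; there I need that pure branching is optimal from $(y-\rbd,k)$, i.e.\ $m(y-\rbd,k)=(2^{\lceil (y-\rbd)/\rbd\rceil+1}-1)\HCf(k)$, which plugged into $m(y,k)=\HCf(k)+2m(y-\rbd,k)$ yields $(2^{\lceil y/\rbd\rceil+1}-1)\HCf(k)=R(y,k)$. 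Thus everything reduces to the monotonicity property that \emph{branching optimal at $(y,k)$ implies branching optimal at $(y-\rbd,k)$}, which, iterated down to $y\le 0$, forces pure branching from the moment branching is ever chosen at a fixed cut-depth.

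Establishing this monotonicity is the main obstacle, and it is genuinely global rather than local: the purely local exchange of swapping a cut above a neighbouring branch reduces the tree time by $2\HCf(k)-\HCf(k+1)$, a gain only in the sub-doubling regime $\HCf(k+1)\le 2\HCf(k)$, and it \emph{increases} time when $\HCf(k+1)>2\HCf(k)$. The monotonicity is precisely what rescues the super-doubling regime, where it says that no cut ever sits below a branch in an optimal tree. The intuition driving it is amortization: a cut pays a one-time price for descending to the costlier level $\HCf(k+1)$ but then assists on \emph{all} of the remaining bound, so cutting becomes weakly more attractive as $y$ grows, and at each fixed cut-depth the optimal first action switches from ``branch'' to ``cut'' exactly once as $y$ increases. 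I would make this precise by carrying, through the same induction, the statement that $m(y-\cutbd,k+1)-2\,m(y-\rbd,k)$ is nonincreasing in $y$ (its sign is what selects branch versus cut), using that the factor-$2$ growth contributed by branching at cut-depth $k$ makes $2m(\cdot,k)$ grow at least as fast as $m(\cdot,k+1)$.

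This growth comparison is exactly the place where the standing assumption $0<\cutbd\le\lbd=\rbd$ enters: since one cut proves no more bound than one branching layer, a cut cannot ``outrun'' branching and reverse the monotone switch. The hypothesis is moreover unavoidable — with $\cutbd>\rbd$ one can have $\lbd=\rbd$ and yet the best root-cut tree be strictly larger than the optimum (for instance $\lbd=\rbd=1$, $\cutbd=4$, $\HCf(k)=10^{k}$, $\targetbound=5$, where branching once and then cutting in each child beats every cut-then-branch tree), so the argument must, and does, lean on $\cutbd\le\rbd$ exactly at the monotonicity step.
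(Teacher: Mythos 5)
Your route is genuinely different from the paper's. The paper argues by contradiction on an extremal minimal tree: it takes a symmetric $\treeweight$-minimal tree minimizing the depth of the deepest branch node having cut descendants, uses the exchange computation of \cref{lem:heavy} and \cref{lem:cannot-move-cuts-up} to deduce that the block of $k$ cuts sitting below that branch node must satisfy $\HCf(k) > 1 + \sum_{i=0}^{k-1}\HCf(i)$ (else they could be moved up to the root), and then observes that this very inequality makes it strictly cheaper to push those $k$ cuts one level further \emph{down}, contradicting minimality of the subtree via \cref{lem:subtrees-are-optimal}. You instead set up a one-dimensional dynamic program and aim to show the optimal policy is ``cut when the remaining bound is large, branch when it is small,'' i.e., that the first-action preference switches at most once as $y$ decreases. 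Your reduction of the theorem to that single monotonicity statement is sound, and your handling of the ``cut weakly optimal'' case is fine.

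The gap is that the monotonicity statement itself --- branching optimal at $(y,k)$ implies branching optimal at $(y-\rbd,k)$ --- is exactly the hard part, and you do not prove it. Your plan is to carry through the induction that $m(y-\cutbd,k+1)-2\,m(y-\rbd,k)$ is nonincreasing in $y$, justified by the assertion that ``$2m(\cdot,k)$ grows at least as fast as $m(\cdot,k+1)$.'' That assertion is itself a nontrivial comparison of value functions at two different cut-depths (hence two different effective \timefunction{}s), it is not obviously preserved by the recursion, and the two terms are evaluated at differently shifted arguments ($y-\cutbd$ versus $y-\rbd$). As written this is a plan, not a proof. The paper's push-down argument avoids any such global comparison: the same inequality that blocks moving cuts up forces them strictly downward forever, which a finite tree cannot accommodate.

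Separately, your counterexample for $\cutbd>\rbd$ checks out and matters: with $\lbd=\rbd=1$, $\cutbd=4$, $\HCf(k)=10^k$, $\targetbound=5$, branching at the root and then cutting in each child gives tree \heavycutweight $23$, while the best tree with all cuts at the root has \heavycutweight $31$, so the statement fails without a hypothesis relating $\cutbd$ and $\rbd$ --- and the theorem as printed carries no such hypothesis. This aligns with the one step where the paper's proof silently uses $\cutbd\le\rbd$: in the case where the cut path ends at a leaf, the shifted tree $T''$ has leaves with bound $\rbd+(k-1)\cutbd$, and inferring that this meets the target from $k\cutbd$ meeting the target requires $\rbd\ge\cutbd$. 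You were right to work under $\cutbd\le\rbd$, but you should state explicitly that you are proving a corrected statement rather than the one given.
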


We prove \Cref{thm:root-cuts-suffice} in \Cref{sec:proof-of-thm}.
On the way, we present several intermediate results of independent interest,
which relate properties of general \timefunction{}s to the optimal number and location of cuts in the tree.

\subsection{Minimality of Subtrees and Symmetric Trees}

First, in \Cref{lem:subtrees-are-optimal}, we prove that a subtree of a minimal tree is also minimal.
Given a tree $T$ and any node $u \in T$, let $K_T(u)$ denote the number of cut nodes on the path from the root of $T$ to $u$.

\begin{lemma}
\label{lem:subtrees-are-optimal}
    Let $T$ be a $\treeweight$-minimal $\SVBC{\lrcw}$ tree proving bound $\targetbound$.
    The subtree $T_u$ rooted at $u$ is a $\treeweight$-minimal $\SVBC(\lbd,\rbd;\cbd,\bar{\HCf})$ tree proving bound $\targetbound-\gapfn(u)$,
    where $\bar{\HCf}(z) \defeq \HCf(K_T(u)+z) / \HCf(K_T(u))$ for all $z \in \nonnegints$.
\end{lemma}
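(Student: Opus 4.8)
The plan is to prove the statement by an exchange (grafting) argument, built on a single multiplicative identity relating node times inside $T_u$ when $T_u$ is viewed as part of $T$ versus as a standalone tree. First I would record the cut-count bookkeeping. Fix any $v \in \nodes_{T_u}$ and let $z_v$ be the number of cut nodes on the path from the root of $T$ to $v$, and $z_v'$ the analogous count on the path from $u$ to $v$ inside $T_u$. Since $T_u$ contains all descendants of $u$, the cut-node ancestors of $v$ in $T$ are exactly the $K_T(u)$ cut nodes lying above $u$ together with the cut-node ancestors of $v$ within $T_u$, so $z_v = K_T(u) + z_v'$. Substituting into the definition of $\bar{\HCf}$ gives the key identity that the node time of $v$ in $T$ satisfies $\HCf(z_v) = \HCf(K_T(u))\,\bar{\HCf}(z_v')$, i.e.\ it is $\HCf(K_T(u))$ times the node time of $v$ in the standalone tree $T_u$ under $\bar{\HCf}$ (see \Cref{def:node-and-tree-time}). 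Summing over $v \in \nodes_{T_u}$ yields
\[
  \sum_{v \in \nodes_{T_u}} \HCf(z_v) = \HCf(K_T(u))\,\treeweight_{\bar{\HCf}}(T_u),
\]
so the contribution of $T_u$ to $\treeweight_{\HCf}(T)$ is exactly $\HCf(K_T(u))$ times its standalone time.

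Next I would dispatch the two easy structural facts. Since $\bar{\HCf}(z) = \HCf(K_T(u)+z)/\HCf(K_T(u))$ is a positive-constant multiple of a shift of the nondecreasing function $\HCf$, it is nondecreasing, takes values in $[1,\infty)$, and satisfies $\bar{\HCf}(0)=1$, so $\bar{\HCf}$ is a valid \timefunction. Moreover, every leaf $w$ of $T_u$ is a leaf of $T$ (it has no children in $T$), hence $\gapfn[T](w) \ge \targetbound$; measuring bounds in $T_u$ relative to its root $u$ gives $\gapfn[T_u](w) = \gapfn[T](w) - \gapfn(u) \ge \targetbound - \gapfn(u)$, so $T_u$ proves bound $\targetbound - \gapfn(u)$ as a standalone $\SVBC(\lbd,\rbd;\cbd,\bar{\HCf})$ tree.

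For minimality I would argue by contradiction. Suppose some standalone $\SVBC(\lbd,\rbd;\cbd,\bar{\HCf})$ tree $S$ proves bound $\targetbound - \gapfn(u)$ with $\treeweight_{\bar{\HCf}}(S) < \treeweight_{\bar{\HCf}}(T_u)$. Form $T'$ by deleting $T_u$ from $T$ and grafting $S$ in its place, identifying the root of $S$ with the former position of $u$. The structural improvement values $(\lbd,\rbd)$ and $\cbd$ carry over unchanged, so $T'$ is a valid $\SVBC(\lbd,\rbd;\cbd,\HCf)$ tree. Each leaf $w$ of the grafted $S$ has $\gapfn[S](w) \ge \targetbound - \gapfn(u)$, and the path to the grafting point contributes $\gapfn(u)$, so $w$ has bound $\ge \targetbound$ in $T'$; all other leaves are unchanged, so $T'$ still proves $\targetbound$. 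The nodes outside $T_u$ keep their cut-prefixes, so their combined time $C$ is identical in $T$ and $T'$, while the grafted copy of $S$ sits below the same $K_T(u)$ ambient cuts. Applying the identity once more,
\[
  \treeweight_{\HCf}(T') = C + \HCf(K_T(u))\,\treeweight_{\bar{\HCf}}(S) < C + \HCf(K_T(u))\,\treeweight_{\bar{\HCf}}(T_u) = \treeweight_{\HCf}(T),
\]
where the strict inequality uses $\HCf(K_T(u)) \ge 1 > 0$. This contradicts the $\treeweight$-minimality of $T$, so no such $S$ exists and $T_u$ is $\treeweight_{\bar{\HCf}}$-minimal.

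The crux, and the step most prone to error, is the cut-count bookkeeping underlying the identity: pinning down whether $u$ itself is counted in $K_T(u)$ and checking that the root of a freshly grafted standalone tree correctly inherits the ambient cut prefix (the off-by-one that $\bar{\HCf}$ is designed to absorb). Once the node-time identity $\HCf(z_v) = \HCf(K_T(u))\,\bar{\HCf}(z_v')$ is established, the remainder is a routine exchange argument, since multiplying every node time of a competing subtree by the common positive factor $\HCf(K_T(u))$ preserves the direction of any strict inequality.
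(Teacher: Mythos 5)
Your proposal is correct and follows essentially the same approach as the paper: both rest on the identity $K_T(v) = K_T(u) + K_{T_u}(v)$, hence $\HCf(K_T(v)) = \HCf(K_T(u))\,\bar{\HCf}(K_{T_u}(v))$, and then run an exchange argument in which a hypothetically better subtree is grafted in place of $T_u$ to contradict the minimality of $T$ (the paper phrases this as a direct inequality chain rather than a contradiction, but the content is identical). Your additional checks that $\bar{\HCf}$ is a valid \timefunction{} and that $T_u$ proves the bound $\targetbound - \gapfn(u)$ are fine and, if anything, more complete than the paper's writeup.
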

\begin{proof}
    Let $T'$ denote any $\SVBC{\lrcw}$ tree proving bound $\targetbound$ that coincides with $T$ for all nodes not in $T_u$.
    Intuitively, if the time for $T'_u$ is less than that of $T_u$, then as both subtrees prove the same bound using the same branch and cut values,
    replacing $T_u$ by $T'_u$ in $T$ would contradict the minimality of $T$.
    
    More directly, the minimality of $T$ implies that $\treeweight_{\HCf}(T) \le \treeweight_{\HCf}(T')$ and hence
         \begin{align*}
             0 &\ge \treeweight_{\HCf}(T) - \treeweight_{\HCf}(T')
             \\
             &= \sum_{v \in T} \HCf(K_T(v)) - \sum_{v \in T'} \HCf(K_{T'}(v))
             \\
             &= \left( \sum_{v \in T \setminus T_u} \HCf(K_T(v)) + \sum_{v \in T_u} \HCf(K_T(v)) \right)
             \\
             &\phantom{=}\ - \left( \sum_{v \in T' \setminus T'_u} \HCf(K_{T'}(v)) + \sum_{v \in T'_u} \HCf(K_{T'}(v)) \right)
             \\
             &= \sum_{v \in T_u} \HCf(K_T(v)) - \sum_{v \in T'_u} \HCf(K_{T'}(v))
             \\
             &= \sum_{v \in T_u} \HCf(K_T(u) + K_{T_u}(v)) - \sum_{v \in T'_u} \HCf(K_{T'}(u) + K_{T'_u}(v))
             \\
             &= \HCf(K_T(u)) \sum_{v \in T_u} \bar{\HCf}(K_{T_u}(v)) - \HCf(K_{T'}(u)) \sum_{v \in T'_u} \bar{\HCf}(K_{T'_u}(v))
             \\
             &= \HCf(K_T(u)) \treeweight_{\bar{\HCf}}(T_u) - \HCf(K_{T'}(u)) \treeweight_{\bar{\HCf}}(T'_u),
         \end{align*}
    which implies that $\treeweight_{\bar{\HCf}}(T_u) \le \treeweight_{\bar{\HCf}}(T'_u)$, as desired.
\end{proof}

Next, in \Cref{lem:l=r_symmetric-tree}, we observe that symmetric trees suffice when $\lbd=\rbd$.

\begin{lemma}
\label{lem:l=r_symmetric-tree}
    If $\lbd=\rbd$, then there exists a $\treeweight$-minimal tree that is symmetric, having the same number of cut nodes along every root-leaf path.
\end{lemma}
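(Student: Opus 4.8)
The plan is to prove the lemma by strong induction on the number of nodes of a $\treeweight$-minimal tree, carried out simultaneously over \emph{all} target bounds and \emph{all} \timefunction{}s, so that the induction hypothesis can be applied to the subproblems produced by splitting the tree. The essential tool is \Cref{lem:subtrees-are-optimal}: splitting a minimal tree $T$ at a node $u$ yields a subtree $T_u$ that is itself minimal for the residual bound $\targetbound - \gapfn(u)$ under the rescaled \timefunction{} $\bar\HCf(z) \defeq \HCf(K_T(u)+z)/\HCf(K_T(u))$. I would first record that $\bar\HCf$ is again an admissible \timefunction{} (nondecreasing with $\bar\HCf(0)=1$), so the family of problems is closed under this rescaling and the induction is well-posed. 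For the base case, if $\targetbound \le 0$ the single-node tree proves the bound with time $\HCf(0)=1$; this is minimal and vacuously symmetric. Otherwise $\targetbound > 0$, the root of any minimal tree is not a leaf, and every proper subtree has strictly fewer nodes, which drives the induction.

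If the root is a cut node with child $w$, then $T_w$ proves $\targetbound - \cbd$ and, by \Cref{lem:subtrees-are-optimal}, is minimal under $\bar\HCf(z) = \HCf(1+z)/\HCf(1)$. The induction hypothesis supplies a symmetric minimal tree $T'_w$ for this subproblem with $\treeweight_{\bar\HCf}(T'_w) = \treeweight_{\bar\HCf}(T_w)$. Re-attaching a cut root in front of $T'_w$ gives a tree whose total time equals $1 + \HCf(1)\,\treeweight_{\bar\HCf}(T'_w) = \treeweight(T)$ by the factorization in the proof of \Cref{lem:subtrees-are-optimal}, hence it is minimal; and every root-leaf path still carries a common cut count, namely one more than the common count inside $T'_w$.

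If the root is a branch node with children $v_1,v_2$, then because $\lbd=\rbd$ both children carry bound $\lbd$, so $T_{v_1}$ and $T_{v_2}$ each prove $\targetbound - \lbd$ under the \emph{same} \timefunction{} $\HCf$ (no cut lies on a root-to-child path, so the rescaling is trivial), and both are minimal by \Cref{lem:subtrees-are-optimal}; in particular they have equal time. Applying the induction hypothesis to $T_{v_1}$ produces a symmetric minimal tree $S$ of that same time, which I would graft into \emph{both} child positions. The total time is unchanged, so the new tree is minimal, and since every root-leaf path of $S$ has a common cut count $m$ while the branch root contributes no cut, every root-leaf path of the grafted tree has exactly $m$ cuts. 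This is the step where $\lbd=\rbd$ is indispensable: it is exactly what makes the two children pose identical subproblems, allowing one symmetric subtree to be reused on both sides with no loss.

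I do not expect a genuine obstacle, since $\lbd=\rbd$ collapses the branch case to a symmetric duplication; the only care required is bookkeeping — checking that each rescaled \timefunction{} remains admissible and that grafting preserves total time \emph{exactly}, both of which follow from the time-factorization already established in \Cref{lem:subtrees-are-optimal}. It is worth flagging that the argument is specific to $\lbd=\rbd$: for $\lbd\neq\rbd$ the two children would prove different residual bounds and generally admit subtrees of different sizes, so forcing a common cut count could strictly increase the time, which is consistent with \Cref{fig:HeavyCutNoFront}.
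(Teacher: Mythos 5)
Your proof is correct and takes essentially the same approach as the paper: the paper's (much terser) argument likewise invokes \Cref{lem:subtrees-are-optimal} to observe that, when $\lbd=\rbd$, two nodes at the same depth with the same cut count pose identical subproblems, so one sibling subtree may be replaced by the other without changing the tree \heavycutweight. Your induction simply formalizes the recursive ``duplicate one sibling subtree into both positions'' step that the paper leaves implicit.
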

\begin{proof}
    The result follows from \Cref{lem:subtrees-are-optimal}, because when $\lbd=\rbd$, if $u$ and $v$ are two nodes at the same depth with $K_T(u) = K_T(v)$,
    then $\gapfn[T](u) = \gapfn[T](v)$.
    Hence if $T$ is $\treeweight$-minimal, then we can assume without loss of generality that the subtree $T_u$ rooted at $u$ is identical to the subtree $T_v$ rooted at $v$.
\end{proof}

\subsection{Adding \texorpdfstring{$k$}{k} Cuts Along Every Root-to-Leaf Path}
\label{sec:k-cuts-along-root-leaf-paths}

We analyze adding $k$ cuts to a generic $\svbc[\lrcw]$ tree
and prescribe how many should be placed before the first branch node. 
{
\begin{figure}[t]
    \centering
    \begin{forest}
        for tree = {small node,l=0.5mm}
        [,cut,nodewt={$\HCf(0)$}
            [,dot node
                [,cut,nodewt={$\HCf(t-1)$}
                    [,nodewt={$\HCf(t)$},s sep=2cm,
                        [,cut,nodewt={$\HCf(t)$}
                          [,dot node
                            [,nodewt={$\HCf(k)$},tikz={ \node [itria,xshift=0pt,fit to=tree,minimum size=.75cm,isosceles triangle apex angle=90,yshift=.1cm,] {$T_L$}; }
                            ]
                          ]
                        ]
                        [,cut,nodewt={$\HCf(t)$}
                            [,dot node
                                [,nodewt={$\HCf(k)$},tikz={ \node [itria,xshift=0pt,fit to=tree,minimum size=.75cm,isosceles triangle apex angle=90,yshift=.1cm,] {$T_R$}; }
                                ]
                            ]
                        ]
                    ]
                ]
            ]
        ]
    \end{forest}
    \caption{What is the optimal choice of the number of cut nodes $\cutind$ to add at the root before we start branching, given a fixed budget of $k$ cut nodes that will be added either before or immediately after the first branch node?}
    \label{fig:HeavyCuts}
\end{figure}
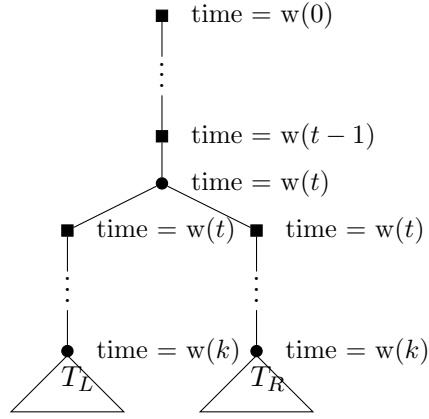
}

\begin{lemma}\label{lem:heavy}
Consider a \bc tree in which each root-to-leaf path has exactly $k$ cut nodes, and each cut node can only be located either before or immediately after the first branching node.
Then the \heavycutweight of the tree is minimized by adding
\begin{equation*}
    \cutind^\star
    \in \argmin_{0 \le \cutind \leq k} \left\{
      \HCf(t) - \sum_{i=0}^{\cutind-1} \HCf(i)
    \right \}
\end{equation*}
cut nodes before the first branch node, and $k-\cutind^\star$ cut nodes in a path starting at each child of the first branch node.
\end{lemma}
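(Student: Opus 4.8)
The plan is to write the tree time explicitly as a function of $\cutind$ and then strip away every term that does not depend on $\cutind$, leaving precisely the objective $\HCf(\cutind)-\sum_{i=0}^{\cutind-1}\HCf(i)$ that the lemma claims is being minimized.

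First I would fix the structure drawn in \Cref{fig:HeavyCuts}: the tree is a root path of $\cutind$ cut nodes, a single branch node, a path of $k-\cutind$ cut nodes hanging off \emph{each} child of that branch node, and two subtrees $T_L, T_R$ rooted at the ends of those two paths. Since every root-to-leaf path carries exactly $k$ cut nodes and cuts may only appear in these positions, $T_L$ and $T_R$ contain no cut nodes; hence every node of $T_L$ (resp.\ $T_R$) has exactly $k$ cut ancestors and contributes node time $\HCf(k)$. Moreover, because cut strength is the constant $\cbd$, $k$ cuts raise the bound by $k\cbd$ regardless of how they are split between the two allowed locations, so the bound at the root of $T_L$ is $k\cbd+\lbd$ and at the root of $T_R$ is $k\cbd+\rbd$, \emph{independent of} $\cutind$. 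Consequently $T_L$ and $T_R$, and thus their total contribution $\HCf(k)\,(\cardinality{\nodes_{T_L}}+\cardinality{\nodes_{T_R}})$, are identical for every choice of $\cutind$.

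Next I would read off the spine node times using the convention that a node's time is $\HCf(z)$ with $z$ its number of cut ancestors: the $\cutind$ root cuts contribute $\sum_{i=0}^{\cutind-1}\HCf(i)$, the branch node contributes $\HCf(\cutind)$, and each post-branch cut path contributes $\sum_{i=\cutind}^{k-1}\HCf(i)$. Writing $S \defeq \sum_{i=0}^{k-1}\HCf(i)$ and $C \defeq \HCf(k)\,(\cardinality{\nodes_{T_L}}+\cardinality{\nodes_{T_R}})$, the tree time is
\[
  \treeweight(T)
  = \sum_{i=0}^{\cutind-1}\HCf(i) + \HCf(\cutind) + 2\sum_{i=\cutind}^{k-1}\HCf(i) + C.
\]
Substituting $\sum_{i=\cutind}^{k-1}\HCf(i) = S - \sum_{i=0}^{\cutind-1}\HCf(i)$ to eliminate one copy of the tail sum collapses this to $\treeweight(T) = (2S+C) + \bigl(\HCf(\cutind) - \sum_{i=0}^{\cutind-1}\HCf(i)\bigr)$. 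As $2S+C$ is independent of $\cutind$, minimizing $\treeweight(T)$ over $\cutind\in\{0,\dots,k\}$ is equivalent to minimizing $\HCf(\cutind)-\sum_{i=0}^{\cutind-1}\HCf(i)$, which is exactly the prescribed $\cutind^\star$; the remaining $k-\cutind^\star$ cuts then lie on the two post-branch paths, as claimed.

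I do not anticipate a genuine difficulty: the argument is bookkeeping of node times together with one telescoping identity. The only points needing care are (i) rigorously justifying that $T_L$ and $T_R$ are truly $\cutind$-independent --- this is where the constant-cut-strength assumption and the ``$k$ cuts on every root-to-leaf path'' hypothesis are essential --- and (ii) handling the boundary conventions $\cutind = 0$ (empty root-cut sum) and $\cutind = k$ (empty post-branch sums), so that the displayed expression and the telescoping substitution remain valid at the endpoints of the range.
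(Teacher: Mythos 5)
Your proposal is correct and follows essentially the same route as the paper's proof: write the tree time as the spine node times plus the ($\cutind$-independent) contributions of $T_L$ and $T_R$, then telescope the post-branch sum to isolate $\HCf(\cutind)-\sum_{i=0}^{\cutind-1}\HCf(i)$. The only differences are cosmetic bookkeeping of where the $\HCf(k)$ terms are counted and your (welcome) explicit justification that $T_L$ and $T_R$ are unaffected by the choice of $\cutind$, which the paper leaves implicit.
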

\begin{proof}
Suppose tree $T$ has $\cutind$ cut nodes at the root, followed by a branch node, then $k-\cutind$ cut nodes at each child of the branch node, followed by subtrees $T_L$ and $T_R$ in the left and right child;
refer to \Cref{fig:HeavyCuts}.
Then, the tree time is
\begin{equation*}
    \treeweight(T) = 
        \sum_{i=0}^\cutind \HCf(i) 
        + 2\left( \sum_{i=\cutind}^k \HCf(i) \right)
        + \treeweight(T_L) + \treeweight(T_R).
\end{equation*}
The first sum corresponds to the time of the nodes before branching, %
the $\cutind$ cut nodes and 1 branch node.
The next term is the time of the $k-\cutind$ cut nodes added after the first branch node, for each branch.
Finally, we add the times of the remaining subtrees.

We are interested in finding $\cutind$ that minimizes the tree's \heavycutweight. Hence,
\begin{align*}
    \cutind^\star &\in \argmin_{0 \le \cutind \le k} \left\{
        \sum_{i=0}^\cutind \HCf(i) 
        + 2 \sum_{i=\cutind}^k \HCf(i)
        + \treeweight(T_L) + \treeweight(T_R)
    \right\}
    \\ &=\argmin_{0 \le \cutind \le k} \left\{
        \sum_{i=0}^k \HCf(i) 
        + \HCf(\cutind) + \sum_{i=\cutind}^k \HCf(i)
    \right\}
    \\ &=\argmin_{0 \le \cutind \le k} \left\{
        \HCf(\cutind) + \sum_{i=\cutind}^k \HCf(i)
    \right\}
    \\ &=\argmin_{0 \le \cutind \le k} \left\{
        \HCf(\cutind) + \sum_{i=0}^k \HCf(i) - \sum_{i=0}^{\cutind-1} \HCf(i)
    \right\}
    \\ &=\argmin_{0 \le \cutind \le k} \left\{
        \HCf(\cutind) - \sum_{i=0}^{\cutind-1} \HCf(i)
    \right\}. 
    \qedhere
\end{align*}
\end{proof}

\begin{lemma}
\label{lem:cannot-move-cuts-up}
    If $\lbd=\rbd$, and $T$ is a symmetric $\treeweight$-minimal tree proving bound $\targetbound$ with a path of $k$ cut nodes incident to each child of the root node, then for all $q \in [1,k]$, it holds that
      \begin{equation}
          \HCf(q) \ge 1 + \sum_{i=0}^{q-1} \HCf(i).
      \end{equation}
\end{lemma}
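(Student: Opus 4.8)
The plan is to reduce the claim to the optimality condition already characterized in \Cref{lem:heavy}. Observe first that the hypothesis forces the root of $T$ to be a branch node: a cut path is incident to \emph{each} child of the root, so the root has two children and hence is a branch node. Thus the top of $T$ has exactly the shape of \Cref{fig:HeavyCuts} with $\cutind = 0$ cut nodes before the first branch, a path of $k$ cut nodes on each of the two branches, and (using the symmetry hypothesis, or \Cref{lem:l=r_symmetric-tree}) identical subtrees $T_L = T_R$ hanging below those cut paths.

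Next I would consider the one-parameter family of trees obtained by sliding $q \in \{0,1,\ldots,k\}$ of these cut nodes above the root branch, where $q$ plays the role of $\cutind$ in \Cref{lem:heavy}: the $q$ relocated cuts become a single shared path of cuts at the new root, $k-q$ cuts remain on each child path, and the subtrees $T_L,T_R$ are left untouched. Because $\lbd=\rbd$, reordering cut and branch nodes along each root-to-leaf path does not change the bound at any leaf: every leaf still accrues $\rbd$ from the branch, $k\cbd$ from the $k$ cuts lying above its subtree, and the same contribution from $T_L$ (resp.\ $T_R$). Hence every member of this family is a valid $\SVBC$ tree proving the same bound $\targetbound$, and $T$ is precisely the $q=0$ member. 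This is exactly the setting of \Cref{lem:heavy}: since there are always $k$ cuts above each subtree, the times $\treeweight(T_L)$ and $\treeweight(T_R)$ (computed under the shifted \timefunction{} $\HCf(k+\cdot)$) are invariant across the family, so the only $q$-dependent part of the tree time is, up to an additive constant, $g(q) \defeq \HCf(q) - \sum_{i=0}^{q-1}\HCf(i)$.

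It remains to turn minimality into the inequality. Since $T$ is $\treeweight$-minimal over \emph{all} trees proving $\targetbound$, it is in particular minimal within the sliding family, so by \Cref{lem:heavy} the value $q=0$ lies in $\argmin_{0\le q\le k} g(q)$. Using $g(0)=\HCf(0) - \sum_{i=0}^{-1}\HCf(i) = 1$, the optimality relation $g(0) \le g(q)$ for every $q \in [1,k]$ rearranges to $1 \le \HCf(q) - \sum_{i=0}^{q-1}\HCf(i)$, which is the claimed inequality $\HCf(q) \ge 1 + \sum_{i=0}^{q-1}\HCf(i)$.

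The step I expect to require the most care is the second paragraph: verifying that every tree in the sliding family genuinely proves $\targetbound$, and that moving cuts above the branch affects only the term $g(q)$ of the tree time. Both points hinge on $\lbd=\rbd$ (so the per-leaf bound is invariant under reordering) together with the fact that exactly $k$ cuts always sit above $T_L$ and $T_R$ (so their subtree times are unchanged, and symmetry guarantees both child paths start with $k$ cuts to relocate). Once these invariances are pinned down, the remaining bookkeeping is already supplied by \Cref{lem:heavy}, and there is no deeper obstacle.
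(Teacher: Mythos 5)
Your proposal is correct and follows essentially the same route as the paper: both apply \Cref{lem:heavy} to the sliding family of trees obtained by moving $q$ of the $k$ cuts above the first branch node, deduce from $\treeweight$-minimality that $\cutind^\star = 0$ is a minimizer, and rearrange $g(0) = \HCf(0) = 1 \le g(q)$ into the stated inequality. Your version merely spells out more explicitly the (correct) observations that $\lbd=\rbd$ keeps every tree in the family valid for bound $\targetbound$ and that the subtree times below the cut paths are invariant in $q$, which the paper leaves implicit.
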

\begin{proof}
    Applying \Cref{lem:heavy}, the minimality of $T$ implies that placing the $k$ cuts after the root node is weakly better than shifting any number $q \in [1,k]$ cut nodes to the root.
    In other words, the minimizer in \Cref{lem:heavy} is $\cutind^\star = 0$, which implies that
        $\HCf(q) - \sum_{i=0}^{q-1} \HCf(i) \ge \HCf(0) = 1$
    for any $q \in [1,k]$.
\end{proof}

\subsection{Proof of \texorpdfstring{\Cref{thm:root-cuts-suffice}}{Theorem}}
\label{sec:proof-of-thm}

\begin{proof}[Proof of \Cref{thm:root-cuts-suffice}]
    Let $\distance_T(u)$ denote the length of the path in $T$ from the root to node $u$.
    Define $\lengthcutpath_T(u)$ as the number of cut nodes in the subtree rooted at node $u$.
    For a tree $T$, denote the deepest branch node in $T$ that has cut nodes as descendants by
        \[
            \deepestcutnodestart(T) 
            \in 
            \argmax_{
                u \in T
            }
                \{\distance_T(u) \suchthat \lengthcutpath_T(u) > 0,\, u \text{ branch node}\},
        \]
    where we define $\deepestcutnodestart(T)$ as the root of $T$ if there are no cuts or they all form a path at the root.
    
    Let $T$ denote a symmetric (without loss of generality by \cref{lem:l=r_symmetric-tree}) $\treeweight$-minimal $\SVBC{\lrcw}$ tree
    proving bound $\targetbound$
    such that, among all $\treeweight$-minimal trees, $T$ minimizes $\distance_T(\deepestcutnodestart(T))$.
    There is nothing to prove if there are no cut nodes or they all form a path at the root,
    so assume for the sake of contradiction that the cut nodes do not all form a path at the root.
    
    Let $T^\star$ denote the subtree rooted at $u \defeq \deepestcutnodestart(T)$.
    In $T^\star$, $u$ is a branch node, each child of $u$ is a cut node, and after a path of $\lengthcutpath_{T}(u)$ cut nodes from each child, the remainder of the tree is only branch or leaf nodes.
    Note that, by \Cref{lem:subtrees-are-optimal}, $T^\star$ is a $\treeweight$-minimal $\SVBC(\lbd, \rbd; \cbd, \bar{\HCf})$ tree proving bound $\targetbound - \gapfn[T](u)$, 
    where $\bar{\HCf}(z) \defeq \HCf(K+z) / \HCf(K)$ for any $z \in \nonnegints$ and $K$ is the number of cut nodes on the path from the root of $T$ to $u$.
    For convenience, define $k \defeq \lengthcutpath_{T}(u)$, and (without loss of generality) assume $K = 0$, so $\bar{\HCf} = \HCf$.
    Our contradiction will come from proving that $T^\star$ cannot be $\treeweight$-minimal.

    From \Cref{lem:cannot-move-cuts-up} with $q=k$,
    moving the $k$ cuts up to the root node must increase the tree time with respect to $T^\star$:
        \begin{equation}
        \label{eq:move-k-cuts}
            \HCf(k) > 1 + \sum_{i=0}^{k-1} \HCf(i).
        \end{equation}

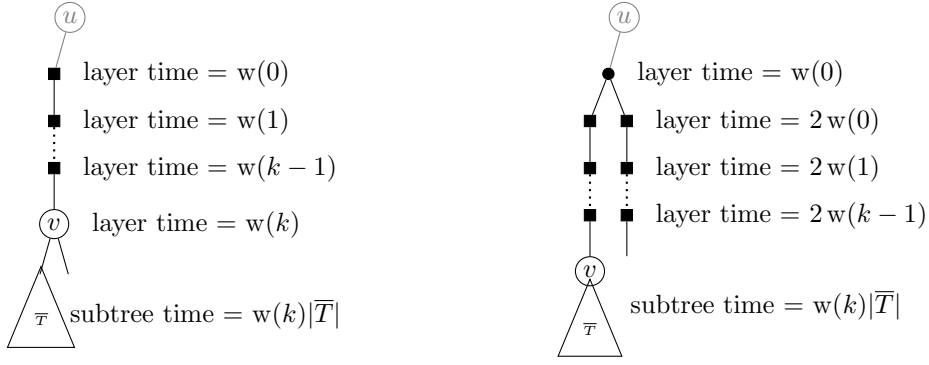
\begin{figure}
    \centering
    \captionsetup[subfigure]{justification=centering}
    \begin{subfigure}[T]{%
        \ifspringer
        0.45\textwidth
        \else
        0.45\textwidth
        \fi
    }
    \centering
    \begin{forest}
        for tree = {small node,l=1pt}
        [$u$,fill=none,inner sep=1.5pt,gray,
            [,cut,lvlwt={$\HCf(0)$},edge={gray}
                [,cut,lvlwt={$\HCf(1)$}
                        [,cut,lvlwt={$\HCf(k-1)$},edge={dotted,thick},
                            [$v$,fill=none,inner sep=1.5pt,lvlwt={$\HCf(k)$},
                                [,empty,tikz={ \node [itria,xshift=1pt,fit to=tree,font=\fontsize{5}{5}\selectfont] {$\bar{T}$}; },subtreewtdeep={$\HCf(k) \cardinality{\bar{T}}$}
                                ]
                                [,empty
                                ]
                            ]
                        ]
                ]
            ]
            [,phantom]
        ]
    \end{forest}
    \end{subfigure}
    \begin{subfigure}[T]{%
        \ifspringer
        0.5\textwidth
        \else
        0.45\textwidth
        \fi
    }
    \centering
    \begin{forest}
        for tree = {small node,l=1pt}
        [$u$,fill=none,inner sep=1.5pt,gray
            [,lvlwt={$\HCf(0)$},edge={gray}
                [,cut,%
                    [,cut,
                            [,cut,edge={dotted,thick},
                                [$v$,fill=none,inner sep=1.5pt,tikz={ \node [itria,xshift=0pt,yshift=-.25cm,inner sep=0pt,fit to=tree,font=\fontsize{5}{5}\selectfont] {$\bar{T}$}; },subtreewtdeep={$\HCf(k) \cardinality{\bar{T}}$}
                                ]
                            ]
                    ]
                ]
                [,cut,lvlwt={$2\HCf(0)$}, %
                    [,cut,lvlwt={$2\HCf(1)$}
                            [,cut,lvlwt={$2\HCf(k-1)$},edge={dotted,thick},
                                [,empty,%
                                ]
                            ]
                    ]
                ]
            ]
            [,phantom]
        ]
    \end{forest}
    \end{subfigure}
    \caption{
        The left panel shows tree $T'$, rooted at the cut node child of $u$,
        while the right panel shows $T''$, the tree obtained from $T'$ by shifting the $k$ cuts down one level,
        where $v$ is now the root of $\bar{T}$.
    }
    \label{fig:move-cuts-down}
\end{figure}

    Let $T'$ denote the subtree rooted at the left child of $u$.
    \Cref{fig:move-cuts-down} depicts $T'$ and a tree $T''$ obtained from $T'$ by shifting the $k$ cuts down a layer.  %
    By \Cref{lem:subtrees-are-optimal}, $T'$ is a $\treeweight$-minimal $\SVBC(\lbd,\rbd; \cbd, \HCf)$ tree proving bound $\targetbound' \defeq \targetbound - \gapfn[T](u) - r$.
    Let $v$ denote the child of the last cut node;
    if $v$ is a branch node, let $\bar{T}$ denote the subtree rooted at either child of $v$,
    and if $v$ is a leaf node, let $\bar{T}$ be empty.
    Then inequality~\eqref{eq:move-k-cuts} implies that %
        \[
            \treetime(T')
            =
          \sum_{i=0}^{k-1} \HCf(i) + \HCf(k) + 2 \HCf(k) \cardinality{\bar{T}}
          >
          1 + 2 \sum_{i=0}^{k-1} \HCf(i) + 2 \HCf(k) \cardinality{\bar{T}}
          = \treetime(T'').
        \]
    The last expression is precisely the time of the new tree $T''$ that proves bound $\targetbound'$, in which the $k$ cuts are shifted down one layer.
    In $T''$, $v$ replaces the root of $\bar{T}$, rather than being the root node's parent as in $T'$,
    with bound 
        $\gapfn[T''](v) 
            = \rbd + k \cbd$;
    all other nodes in $\bar{T}$ have the same bound in both $T'$ and $T''$.
    Note that if $\bar{T}$ is empty, i.e., $v$ is a leaf node,
    then define $T''$ as a tree rooted at a branch node attached to two paths of length $k$, corresponding to the left and right branches consisting of $k-1$ cut nodes and a leaf node.
    When $v$ is a leaf node in $T'$, $\gapfn[T'](v) = k \cbd \ge \targetbound'$, which implies that the leaf nodes of $T''$ have bound $\rbd + (k-1) \cbd \ge \targetbound'$;
    in other words, the ``shift'' operation decreases the total number of cut nodes.
    In either case, the above inequality implies that $\treetime(T') > \treetime(T'')$, contradicting the $\treeweight$-minimality of $T'$ and hence of $T^\star$.
\end{proof}

\section{Conclusion and Potential Extensions}
\label{sec:conclusion}

We analyze a framework capturing several crucial tradeoffs in jointly making branching and cutting decisions for optimization problems.
For example, we show that adding cuts can yield nonmonotonic changes in tree size, which can make it difficult to evaluate the effect of cuts computationally.
Our results highlight challenges for improving cut selection schemes, in terms of their effect on \branchandcutx{} tree size and solution time,
albeit for a simplified setting in which the bound improvement from branching is assumed constant and known, and the bound improvement from cutting is either constant or changing in a specific way.
There do exist contexts in which the relative strength of cuts compared to branching decisions can be approximated, such as by inferring properties for a family of instances, an idea that has seen recent success with machine learning methods applied to integer programming problems \cite{khalil2016learning,khalil2017learning,gasse2019exact,TanAgrFae20,HuaWanLiuZheZhaYuaHaoYuWan22,BerFraHen22,TurKocSerWin22,PauZarKraChaMad22}.
This lends hope to apply our results to improve cut selection criteria for such families of instances and this warrants future computational study, though it is far from straightforward.

This paper focuses on the single-variable version of the abstract \branchandcutx{} model.
Some results extend directly to bounds for a generalization of the model permitting different possible branching variables, by assuming the ``single branching variable'' corresponds to the \emph{best} possible branching variable at every node, but an in-depth treatment of the general case remains open.
Further, an appealing extension of the general \timefunction{}s considered in \Cref{sec:general-cut-time} is to investigate branching on general disjunctions~\cite{Mahajan09,KarCor11,CorLibNan11}, which has been the subject of recent computational study~\cite{YanBolSav20}.

We do not consider some important practical factors, such as interaction with primal heuristics, pruning nodes by infeasibility, or the time it takes to generate cuts.

Finally, most of the results we present in \cref{sec:general-cut-time} for general \timefunction{}s assume that branching on a variable leads to the same bound improvement for both children.
The general situation of unequal and/or nonconstant bound improvements remains open, both regarding the best location of cut nodes and the optimal number of cuts to be added, and merits future theoretical and experimental investigation.

\mbox{}\\
\footnotesize \textbf{Acknowledgements.} The authors thank Andrea Lodi, Canada Excellence Research Chair in Data Science for Real-Time Decision Making, for financial support and creating a collaborative environment that facilitated the interactions that led to this paper, as well as Monash University for supporting Pierre's trip to Montr\'{e}al.

\ifspringer
    \bibliographystyle{plainnat}
\else
    \bibliographystyle{plainnat}
\fi
\bibliography{include/akazachk,include/others}

\appendix
\captionsetup[figure]{font=footnotesize,labelfont=footnotesize,belowskip=-5pt}

\section{Computational Results with Selected MIPLIB Instances}
\label{appendix:more-instances}

\cref{fig:more-instances} shows the linear relaxation bound, predicted bound (using the harmonically-worsening cuts model of \cref{sec:svbhc-def}), linear relaxation resolve time, and cumulative number of Gomory cuts added after up to 100 rounds of cuts have been applied to ten additional instances, using the same computational setup described in \cref{sec:svbhc}.
The same general trends are observed as in the two plots in \cref{fig:two-instances}.
For several instances, such as \instance{air05}, \instance{binkar10}, and \instance{swath3}, the predicted bound --- which is calculated based only on the improvement from the first round of cuts --- is quite close to the actual bound changes after tens of rounds.
The prediction tends to be inaccurate (a large overestimate) as more significant tailing in bound improvement occurs, but occasionally underestimates the bound improvement, such as for \instance{eil33-2}.

\ifarxiv
\begin{figure}
    \centering
	\vspace{-1cm}
    \begin{subfigure}[b]{.49\textwidth}
		\centering
        \includegraphics[height=.175\textheight]{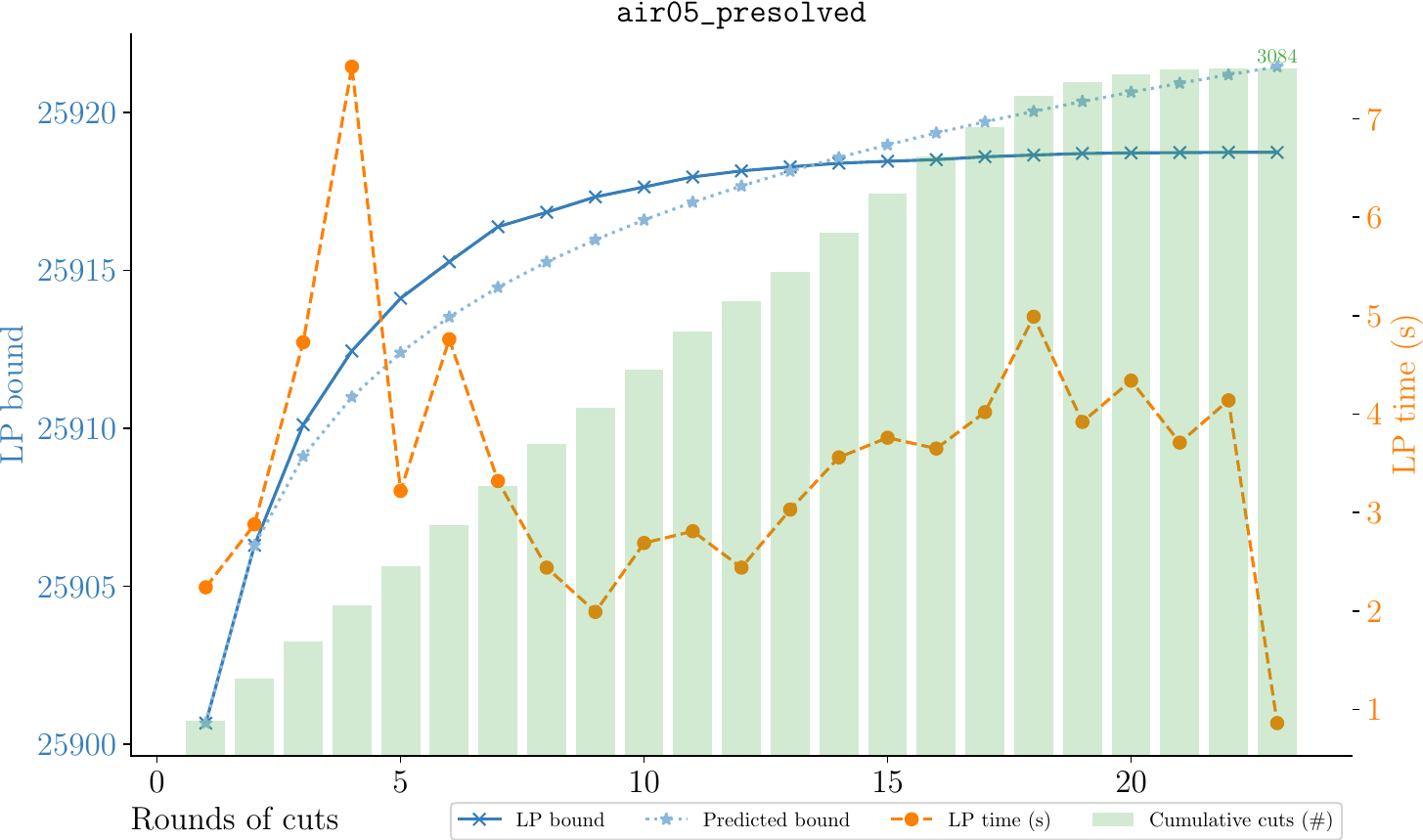}
    \end{subfigure}
    \begin{subfigure}[b]{.49\textwidth}
		\centering
        \includegraphics[height=.175\textheight]{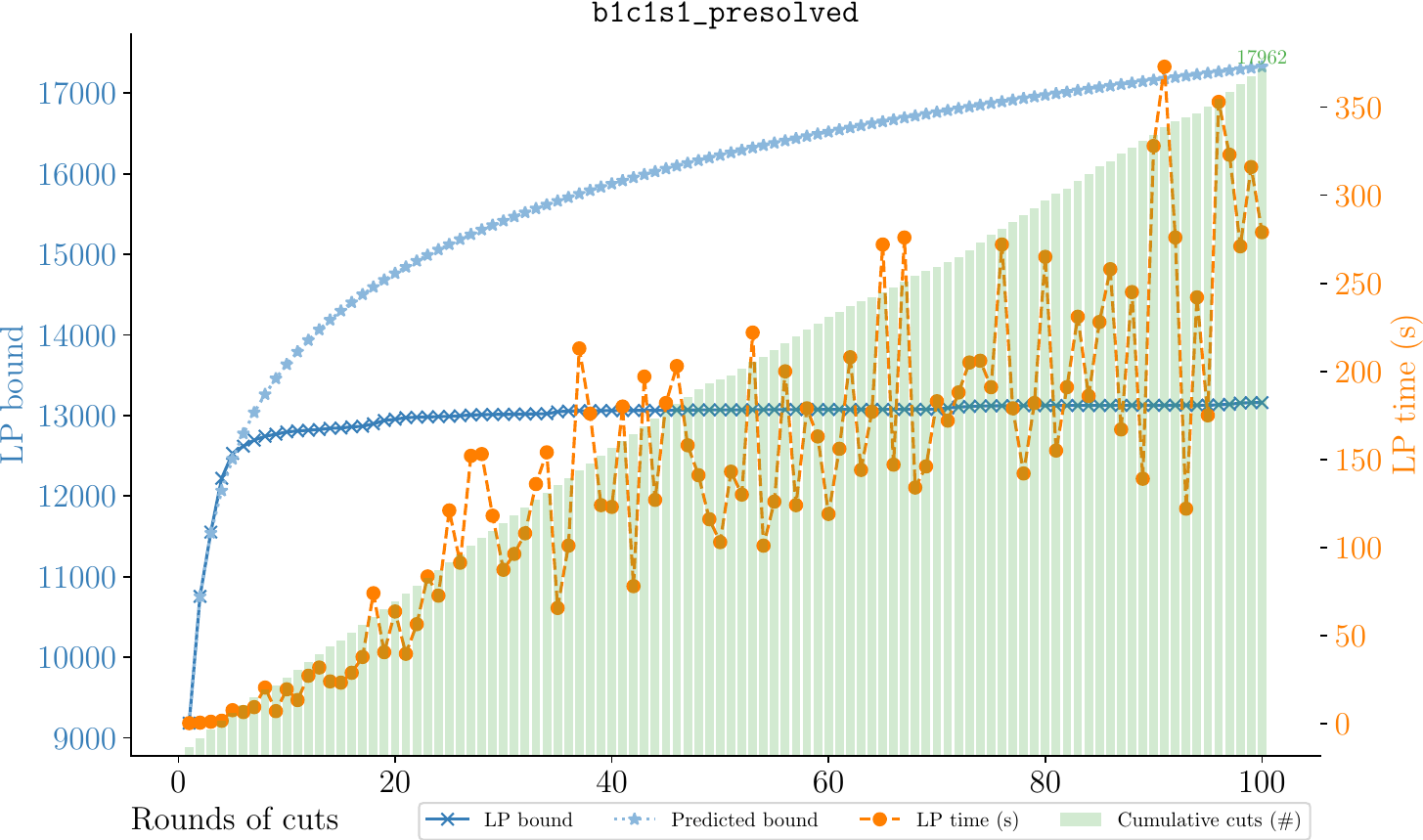}
    \end{subfigure}
    \\\vspace{1em}
    \begin{subfigure}[b]{.49\textwidth}
		\centering
        \includegraphics[height=.175\textheight]{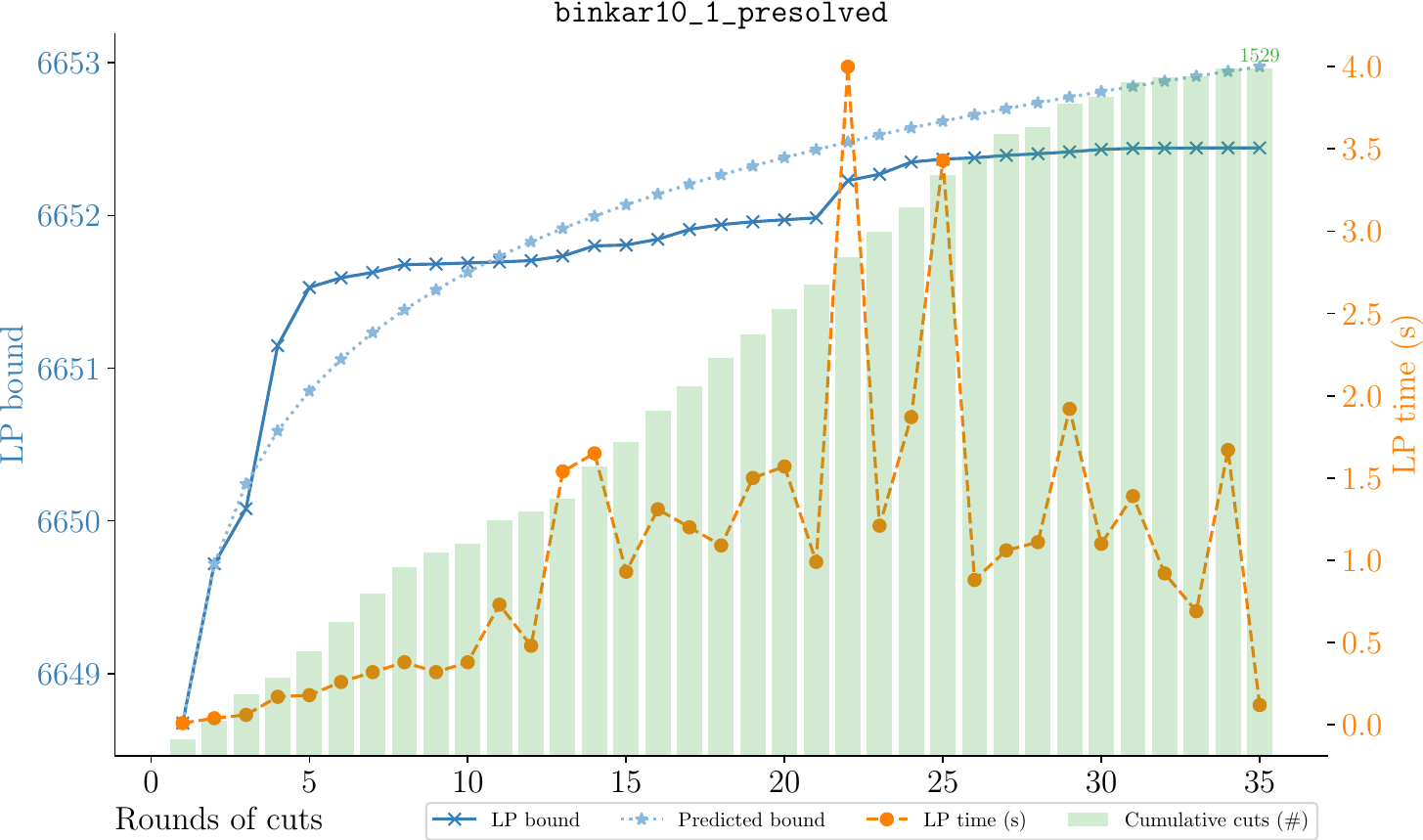}
    \end{subfigure}
    \begin{subfigure}[b]{.49\textwidth}
		\centering
        \includegraphics[height=.175\textheight]{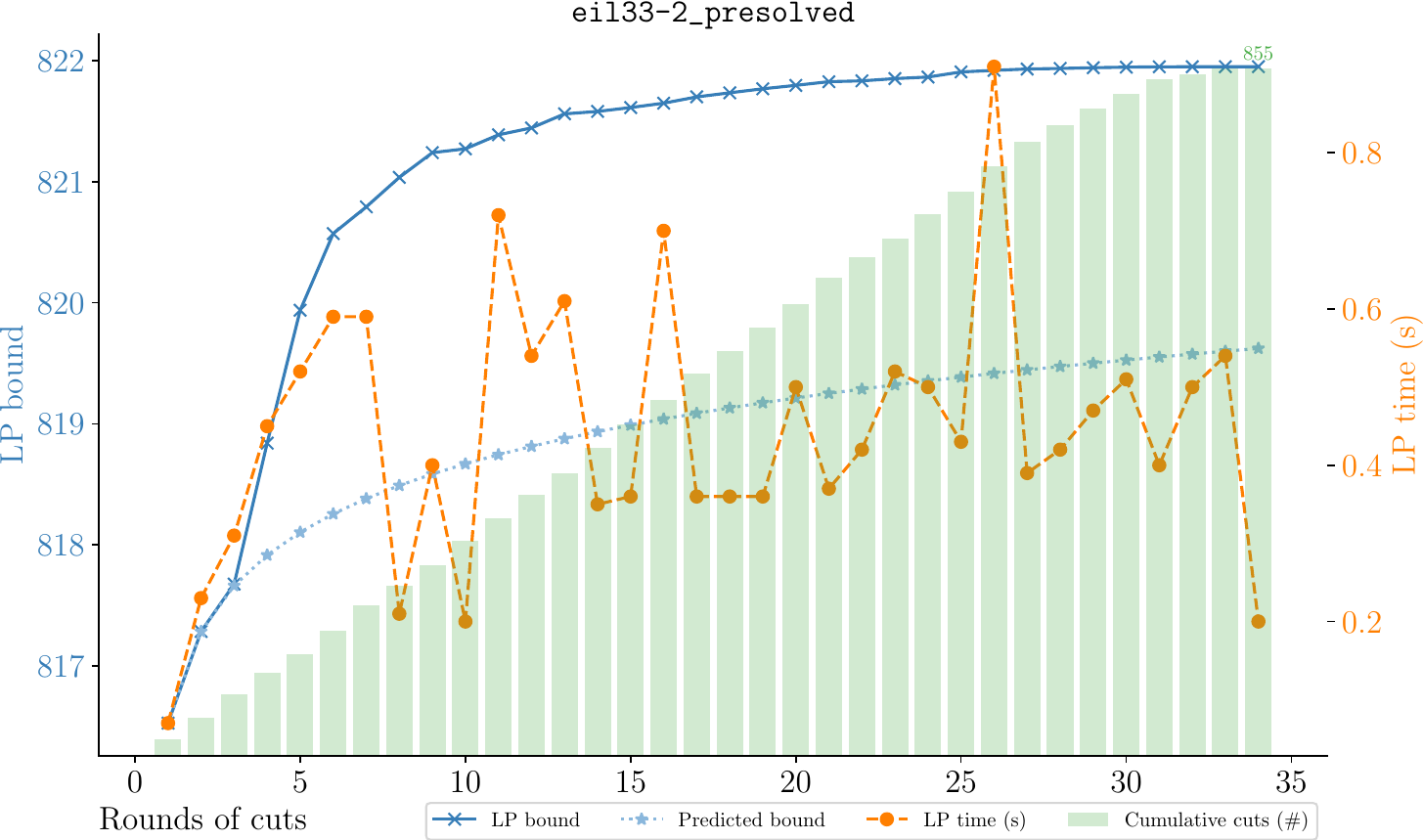}
    \end{subfigure}
    \\\vspace{1em}
    \begin{subfigure}[b]{.49\textwidth}
		\centering
        \includegraphics[height=.175\textheight]{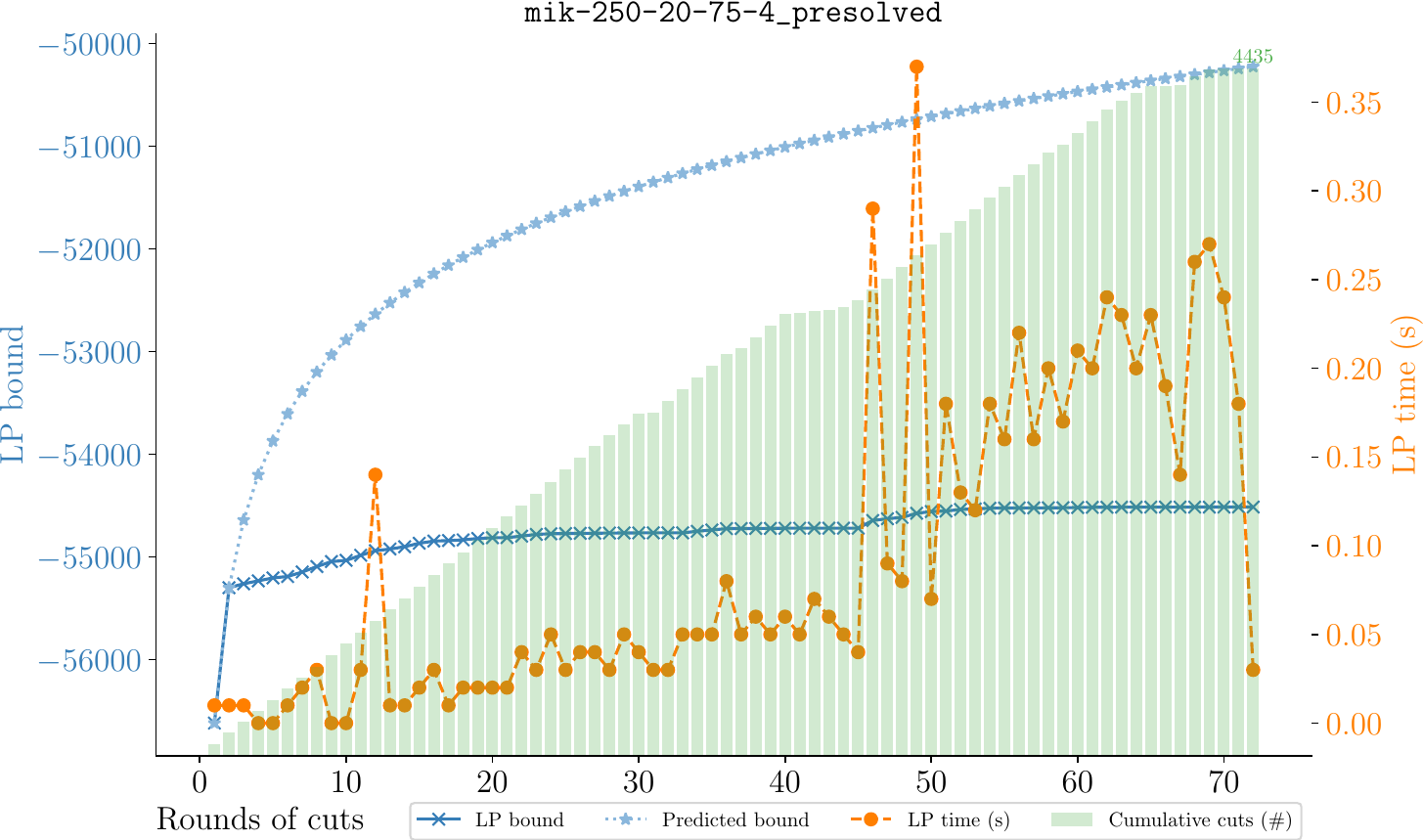}
    \end{subfigure}
    \begin{subfigure}[b]{.49\textwidth}
		\centering
        \includegraphics[height=.175\textheight]{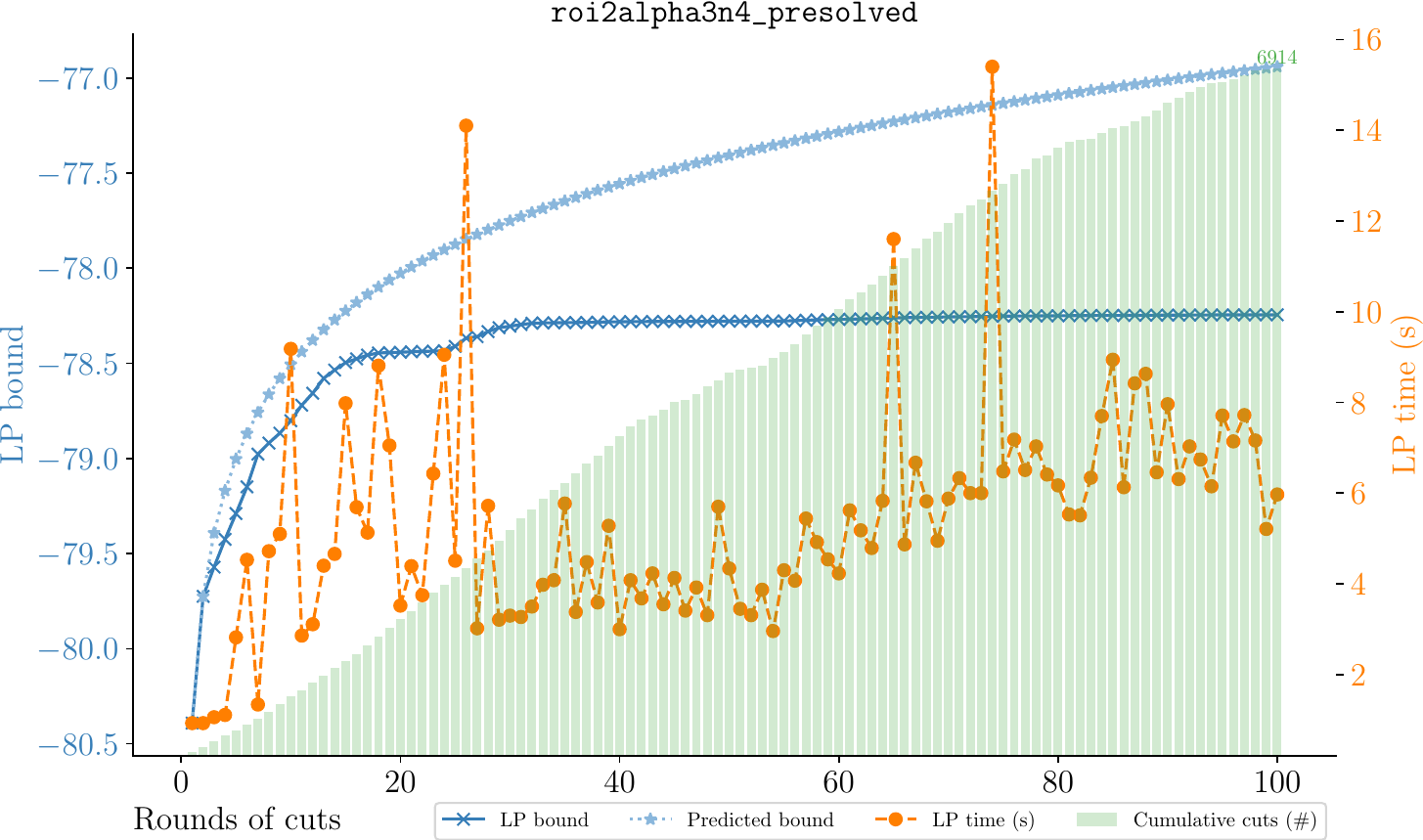}
    \end{subfigure}
    \\\vspace{1em}
    \begin{subfigure}[b]{.49\textwidth}
		\centering
        \includegraphics[height=.175\textheight]{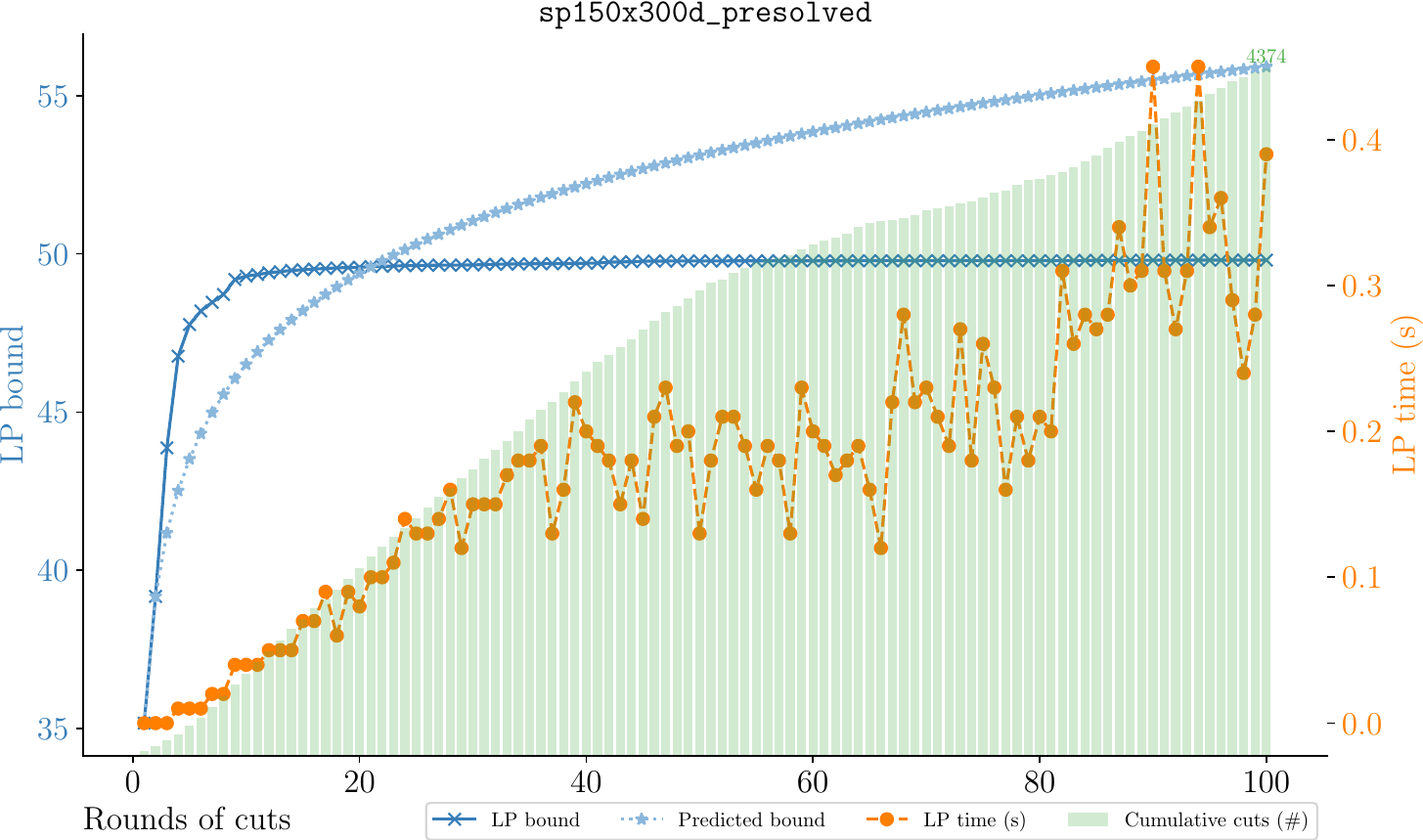}
    \end{subfigure}
    \begin{subfigure}[b]{.49\textwidth}
		\centering
        \includegraphics[height=.175\textheight]{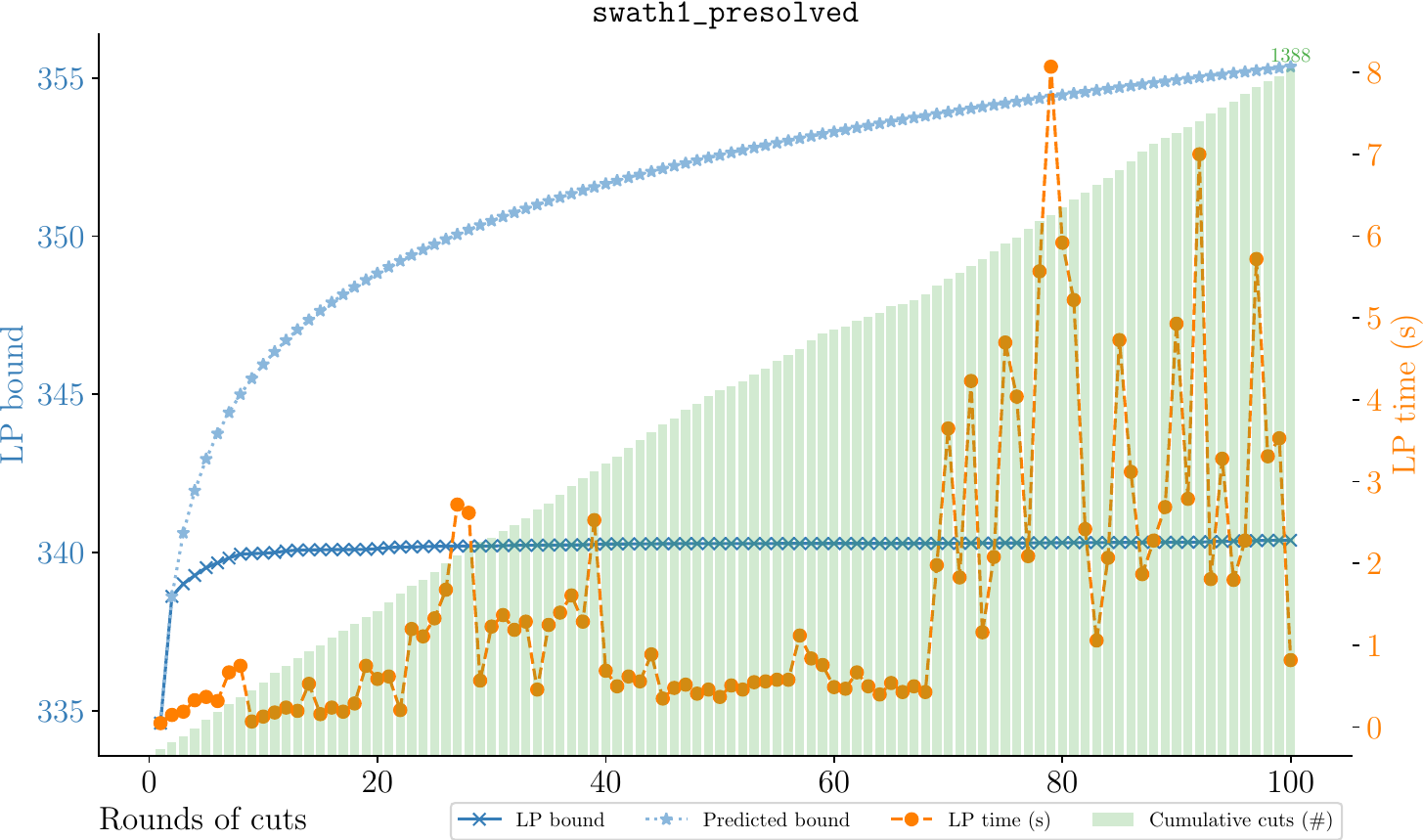}
    \end{subfigure}
    \\\vspace{1em}
    \begin{subfigure}[b]{.49\textwidth}
		\centering
        \includegraphics[height=.175\textheight]{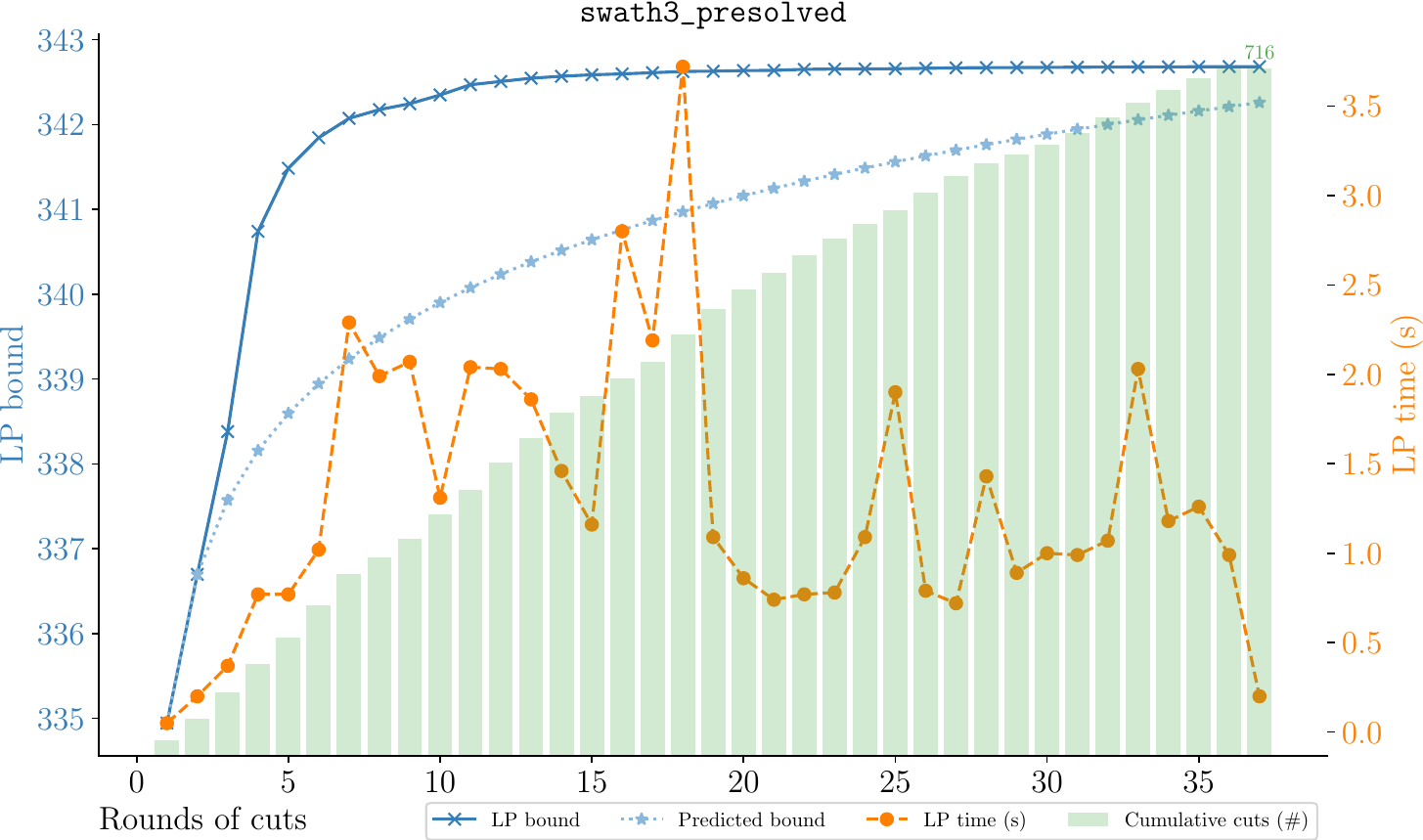}
    \end{subfigure}
    \begin{subfigure}[b]{.49\textwidth}
		\centering
        \includegraphics[height=.175\textheight]{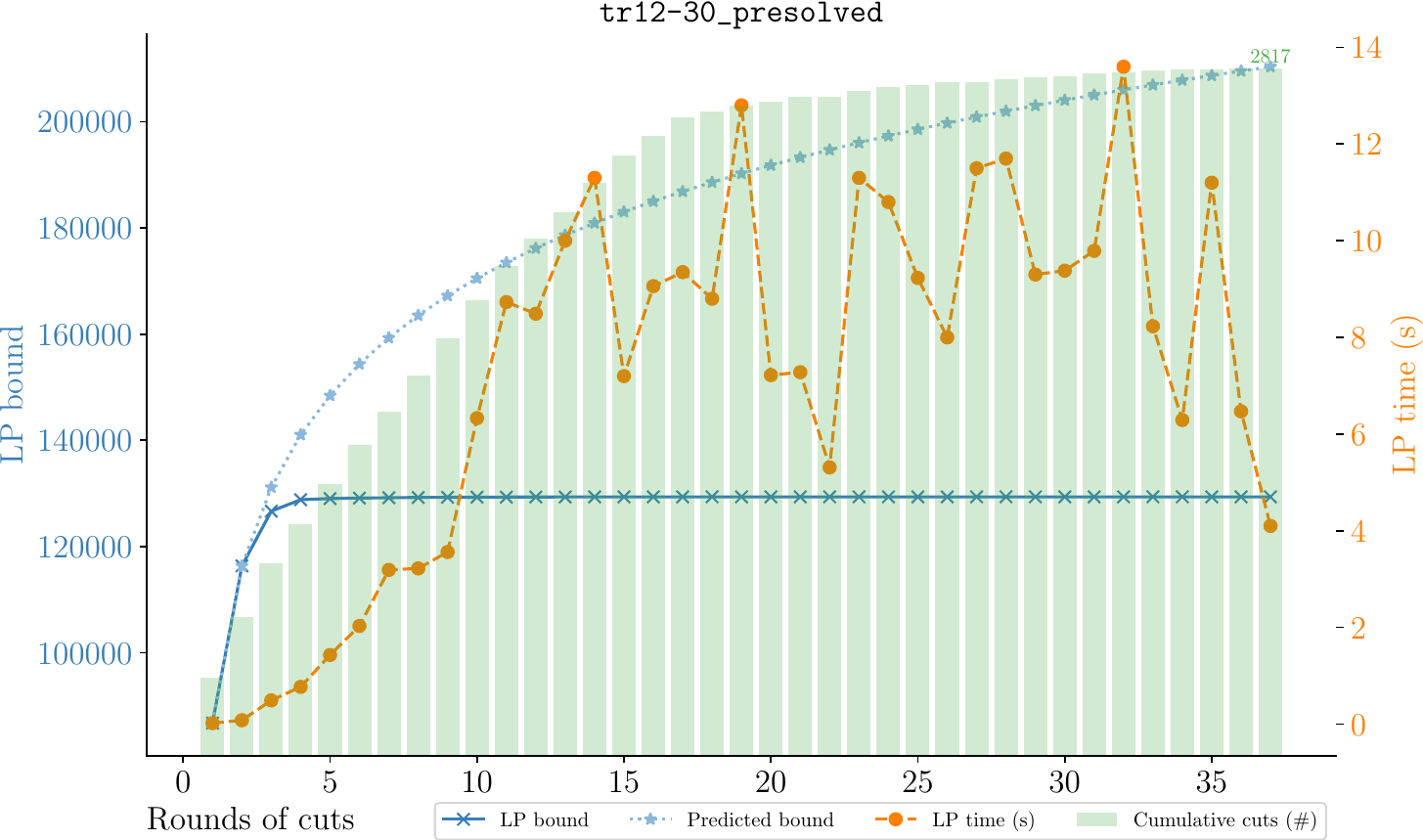}
    \end{subfigure}
    \caption{%
        Applying rounds of cuts on assorted MIPLIB 2017 instances (after preprocessing) typically yields diminishing bound improvement. Several instances show a linear tendency in LP resolve time. The overlayed bar plot for each instance shows the cumulative number of cuts added after each round. The predicted bound using the improvement from the first round follows a logarithmic function that is similar to the actual bound evolution until cut strength exhibits more pronounced tailing off.
    }
    \label{fig:more-instances}
\end{figure}
\else
\begin{figure}
    \centering
    \begin{subfigure}[b]{.49\textwidth}
        \includegraphics[width=\linewidth]{figs/air05_presolved.pdf}
    \end{subfigure}
    \begin{subfigure}[b]{.49\textwidth}
        \includegraphics[width=\linewidth]{figs/b1c1s1_presolved.pdf}
    \end{subfigure}
    \\\vspace{1em}
    \begin{subfigure}[b]{.49\textwidth}
        \includegraphics[width=\linewidth]{figs/binkar10_1_presolved.pdf}
    \end{subfigure}
    \begin{subfigure}[b]{.49\textwidth}
        \includegraphics[width=\linewidth]{figs/eil33-2_presolved.pdf}
    \end{subfigure}
    \\\vspace{1em}
    \begin{subfigure}[b]{.49\textwidth}
        \includegraphics[width=\linewidth]{figs/mik-250-20-75-4_presolved.pdf}
    \end{subfigure}
    \begin{subfigure}[b]{.49\textwidth}
        \includegraphics[width=\linewidth]{figs/roi2alpha3n4_presolved.pdf}
    \end{subfigure}
    \\\vspace{1em}
    \begin{subfigure}[b]{.49\textwidth}
        \includegraphics[width=\linewidth]{figs/sp150x300d_presolved.pdf}
    \end{subfigure}
    \begin{subfigure}[b]{.49\textwidth}
        \includegraphics[width=\linewidth]{figs/swath1_presolved.pdf}
    \end{subfigure}
    \\\vspace{1em}
    \begin{subfigure}[b]{.49\textwidth}
        \includegraphics[width=\linewidth]{figs/swath3_presolved.pdf}
    \end{subfigure}
    \begin{subfigure}[b]{.49\textwidth}
        \includegraphics[width=\linewidth]{figs/tr12-30_presolved.pdf}
    \end{subfigure}
    \caption{%
        Applying rounds of cuts on assorted MIPLIB 2017 instances (after preprocessing) typically yields diminishing bound improvement. Several instances show a linear tendency in LP resolve time. The overlayed bar plot for each instance shows the cumulative number of cuts added after each round. The predicted bound using the improvement from the first round follows a logarithmic function that is similar to the actual bound evolution until cut strength exhibits more pronounced tailing off.
    }
    \label{fig:more-instances}
\end{figure}
\fi

\section{Experiments with Optimal Proportion of Cut Rounds in \SVBWC{}}
\label{appendix:cut-rounds-diminishing-cuts}

\cref{thm:ApxMAIN} proves that, in the \SVBHC{} model of \Cref{sec:svbhc}, the number of cuts prescribed by \cref{alg:SVBWC} is approximately optimal in the sense that the resulting tree is at most a multiplicative factor larger than the optimal tree size.
\cref{thm:apxGrowth} shows that using this approximately-optimal number of cuts proves a constant proportion of the overall bound, in the limit when the target bound goes to infinity.
However, since the multiplicative factor in \cref{thm:ApxMAIN} may be quite large, it is not clear if the same type of limit exists for \emph{minimal-size} trees.
In \cref{fig:fracGapClosed}, we address this question computationally, showing that the proportion of bound proved by cut nodes tends to the same limit in a minimal tree for four artificial instances of the \SVBHC{} model.
Experiments with more instances have shown the same behavior and therefore are omitted.

\begin{figure}
    \centering
    \ifspringer
	    \includegraphics[width=.8\textwidth]{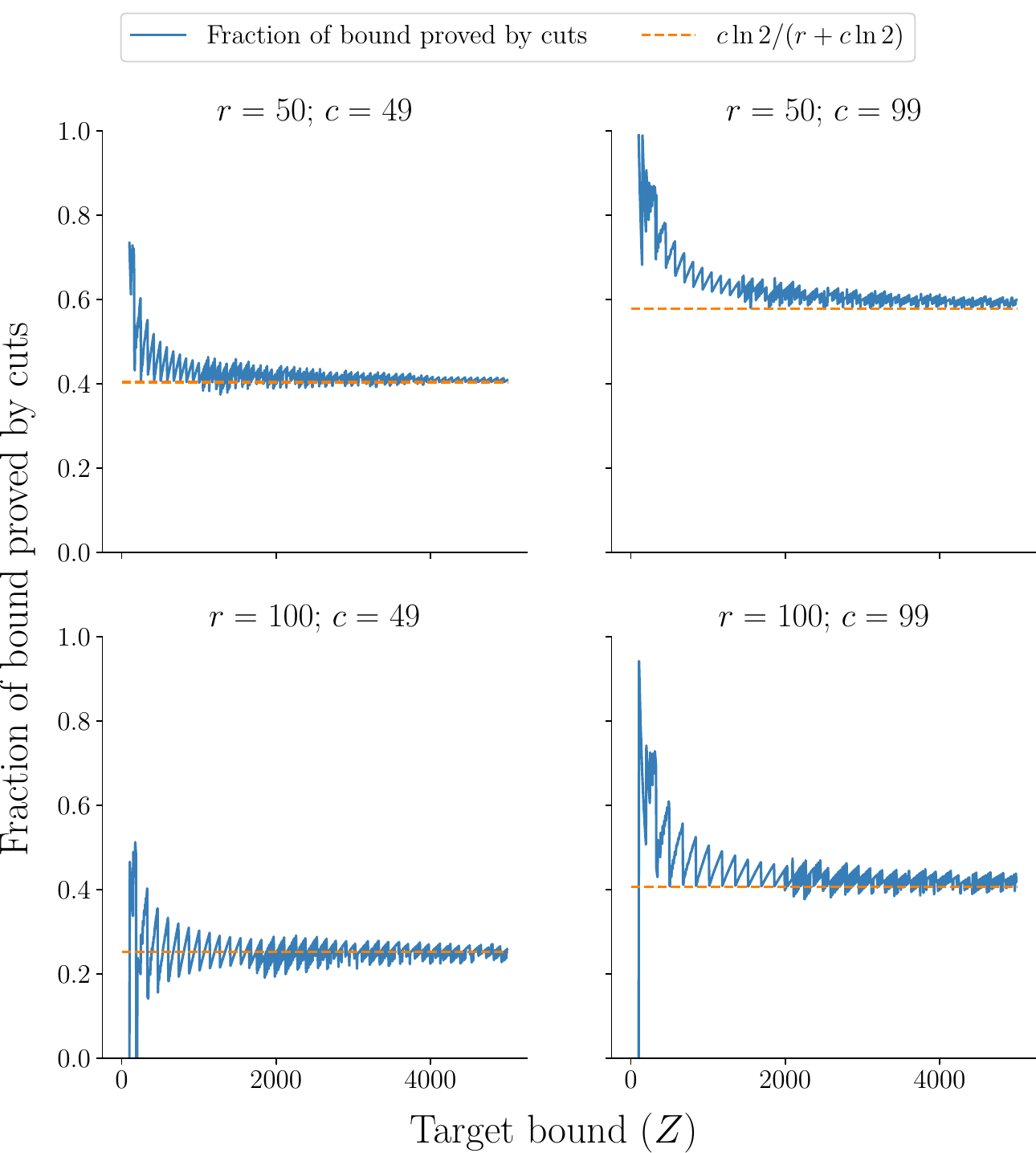}
	\else
		\includegraphics[width=.75\textwidth]{fig-simul/ExactGapClosed.pdf}
	\fi
    \caption{Fraction of bound proved by cut nodes in an (\emph{exactly}-)optimal \SVBWC{} tree, 
    exhibiting convergence to the bound from \cref{thm:apxGrowth} provided by
    the \emph{approximately-optimal} number of cut nodes prescribed by \cref{alg:SVBWC}.
    }
    \label{fig:fracGapClosed}
\end{figure}

\end{document}